\newtheorem{theorem}{Theorem}[section]
\newtheorem{proposition}[theorem]{Proposition}
\theoremstyle{definition}
\newtheorem{definition}[theorem]{Definition}
\newtheorem{remark}[theorem]{Remark}
\newfont{\script}{eusm10 scaled\magstep1}
\newcommand{\N}{\mathbb{N}}
\newcommand{\R}{\mathbb{R}}
\newcommand{\dt}{\mathrm{d}t}
\newcommand{\dx}{\mathrm{d}x}
\newcommand{\dy}{\mathrm{d}y}
\def\Omc{\mathbb{R}^N\setminus\Omega}
\def\Omb{\mathbb{R}^N\setminus\overline{\Omega}}
\def\RR{{\mathbb{R}}}
\def\NN{{\mathbb{N}}}
\def\bOm{\overline{\Om}}
\def\pOm{\partial \Omega}
\def\Om{\Omega}
\def \dis {\displaystyle}
\numberwithin{equation}{section}
\title[Mixed local-nonlocal parabolic PDE]{Optimal control of mixed local-nonlocal parabolic PDE with singular boundary-exterior data}
\author[Djida]{Jean-Daniel Djida}
\author[Mophou]{Gis\`{e}le Mophou}
\author[Warma]{Mahamadi Warma}
\address[Jean-Daniel Djida]{African Institute for Mathematical Sciences (AIMS), P.O. Box 608, Limbe Crystal Gardens,South West Region, Cameroon.}
\email[Djida]{jeandaniel.djida@aims-cameroon.org}
\address[Gis\`{e}le Mophou]{Laboratoire L.A.M.I.A., D\'{e}partement de Math\'{e}matiques et Informatique, Universit\'{e} des Antilles, Campus Fouillole, 97159 Pointe-\`{a}-Pitre,(FWI), Guadeloupe.}
\email[Mophou]{gisele.mophou@univ-antilles.fr}
\address[Mahamadi Warma]{Department of Mathematical Sciences and the Center for Mathematics and Artificial Intelligence (CMAI),
George Mason University, Fairfax, VA 22030, USA.}
\email[Warma]{mwarma@gmu.edu}
\thanks{The first author is supported by the Deutscher Akademischer Austausch Dienst/German Academic Exchange Service (DAAD). The third author is partially supported by the AFOSR under Award NO:  FA9550-18-1-0242 and by the US Army Research Office (ARO) under Award NO: W911NF-20-1-0115.}
\keywords{Mixed local-nonlocal PDE, boundary-exterior conditions, singular data, optimal control, state and control constraints,  optimality conditions.}
\subjclass[2010]{
49J20,  	
49K20,   
35S15,  	
49N60  	
}
\begin{document}

\begin{abstract}
We consider parabolic equations on bounded smooth  open sets $\Om\subset \R^N$ ($N\ge 1$) with mixed Dirichlet type boundary-exterior conditions associated with the elliptic operator $\mathscr{L} \coloneqq - \Delta + (-\Delta)^{s}$ ($0<s<1$). Firstly, we prove several well-posedness and regularity results of the associated elliptic and parabolic problems with smooth, and then with singular boundary-exterior data.
Secondly, we show the existence of optimal solutions of associated optimal control problems, and we characterize the  optimality conditions. This is the first time that such topics have been presented and studied in a unified fashion for mixed local-nonlocal PDEs with singular data.
\end{abstract}

\maketitle

\section{Introduction}


Let $\Omega\subset\R^N$ ($N\ge 1$) be a bounded domain with a smooth boundary $\pOm$.
We consider
the minimization problem:
\begin{subequations}\label{Eq:main_A}
\begin{equation}\label{Eq:func_A1}
\min_{(u_1,u_2) \in \mathcal{Z}_{D}}J(\psi(u_1,u_2)) ,
\end{equation}
subject to the constraints that the state  $\psi := \psi(u_1,u_2)$ solves the following initial-boundary-exterior value problem:
\begin{align}\label{Eq:main_A1}
\begin{cases}
\psi_{t} + \mathscr{L}\psi = 0 & \mbox{ in }\; Q \coloneqq \Omega\times(0,T),\\
\psi=u_1&\mbox{ on }\;\Gamma:=\pOm\times(0,T),\\
\psi = u_2 &\mbox{ in }\; \Sigma \coloneqq (\Omc)\times (0,T), \\
\psi(\cdot,0) = 0, &\mbox{ in }\; \Omega.
\end{cases}
\end{align}
Here, the operator $\mathscr{L}$ is given by
\begin{equation}\label{LOPERATO1}
\mathscr{L} \coloneqq - \Delta + (-\Delta)^{s},\qquad 0<s<1,
\end{equation}
the functional $J: \mathcal{Z}_{D}\to [0,\infty]$ is weakly lower-semicontinuous (we shall give the precise expression of $J$ later),
the control $(u_1,u_2) \in \mathcal{Z}_{ad}$ with $\mathcal{Z}_{ad} \subset \mathcal{Z}_{D}$ being a closed and convex subset, where
\begin{equation}\label{Eq:Control_constA}
  \mathcal{Z}_{D} \coloneqq L^2(\Gamma)\times L^{2}(\Sigma).
\end{equation}
\end{subequations}

In \eqref{LOPERATO1}, $\Delta$ is the classical Laplacian and $(-\Delta)^s$ ($0<s<1$) denotes the fractional Laplace operator given formally by the following singular integral:
\begin{align*}
(-\Delta)^s\psi=\mbox{P.V.}\;C_{N,s}\int_{\RR^N}\frac{\psi(x)-\psi(y)}{|x-y|^{N+2s}}\;dy,
\end{align*}
where $C_{N,s}$ is a normalization constant depending only on $N$ and $s$. We refer to Section \ref{Sec:preli} for more details.

Let us clarify how we interpret the boundary-exterior conditions in \eqref{Eq:main_A1}.
\begin{itemize}
\item If the function $\psi$  has a well-defined trace on $\Gamma$, then $u_1=\psi|_{\Gamma}$. In that case, the condition on $\Gamma$ can be dropped and the problem will still be well-posed as we shall see later. But if the condition in $\Sigma$ is removed,  then the system will be ill-posed.

\item If $\psi$ does not have a well-defined trace on $\Gamma$, then  the condition on $\Gamma$ will be seen in a very-weak sense that we shall explain later. In that case, none of the two conditions (boundary and exterior)  can be removed, otherwise the system will be ill-posed.
\end{itemize}
These important facts will be clarified in Section \ref{Sec:Wellpo}.\medskip

The first main concern of the present paper is to prove several well-posedness results of the parabolic problem \eqref{Eq:main_A1} and the  associated elliptic (time independent) equation:
\begin{equation}\label{E-E}
\begin{cases}
 \mathscr{L}\phi = f & \mbox{ in }\;  \Omega,\\
\phi=u_1&\mbox{ on }\; \pOm,\\
\phi = u_2 &\mbox{ in }\; \Omc.
\end{cases}
\end{equation}
Notice that throughout the paper, since we are considering smooth open sets $\Omega$, it follows that a.e. in $\Omc$ is the same as a.e. in $\Omb$.
The system \eqref{E-E}, with $f\in L^2(\Omega)$ and $u_1,u_2$ are zero, has been very recently studied in \cite{BDVV2020} where the authors have proved some well-posedness,  local and boundary regularity, and some maximum principle results.  Here, we shall show that in this case, the associated self-adjoint operator on $L^2(\Om)$ is a generator of a strongly continuous submarkovian semigroup $(T(t))_{t\ge 0}$ which is also ultracontractive in the sense that the operator $T(t)$ maps $L^1(\Omega)$ into $L^\infty(\Om)$ for every $t>0$. This will be used to have some fine regularity results of the dual problem associated with the system \eqref{Eq:main_A1} which will be crucial in the study of our optimal control problems.\medskip

In \cite{BDVV2020},  the authors also considered briefly the case where $u_1$ and $u_2$ in \eqref{E-E} are smooth functions. In that case, as we mentioned above, the condition on $\partial\Omega$ can be dropped and the associated system will still be well-posed.
In the first part of the present article, for non-smooth boundary-exterior data, we shall introduce the notion of solutions by transposition (or very-weak solutions) of  \eqref{E-E}, study their existence and regularity.  Our main result in this direction reads that if $u_1\in L^2(\pOm)$, $u_2\in L^2(\Omc)$ and $0<s\le 3/4$, then the associated very-weak solution $\psi$ of  \eqref{E-E} belongs to $H^{1/2}(\Omega)\cap L^2(\R^N)$ (see Theorem \ref{th-36} and Remark \ref{rem-37}).  For the associated parabolic problem \eqref{Eq:main_A1}, if $u_1\in L^2(\Gamma)$, $u_2\in L^2(\Sigma)$ and $0<s\le 3/4$, then $\psi\in L^2((0,T)\times \R^N)\cap C([0,T];H^{-1}(\Omega))$ (see Theorem \ref{thm:VWS_Exist_1}).\medskip

Notice that the eigenvalues problem associated with nonlocal Neumann exterior conditions, that is, when $\phi=0$ on $\pOm$ and $\phi=0$ in $\Omc$ are replaced with $\partial_\nu\phi=0$ on $\pOm$ and $\mathcal N_s\phi=0$ in $\Omb$,  respectively (see \eqref{NonlocalDeri} below for the definition of $\mathcal N_s)$ has been recently investigated in \cite{DLV}, where the authors have shown that the associated operator has a compact resolvent, hence, has  a discrete spectrum formed with eigenvalues.  We mention that the case of the fractional Laplace operator with the nonlocal exterior condition $\mathcal N_s\phi=0$ in $\Omb$ has been introduced and investigated in \cite{DRV}.
Here, we are not interested in the Neumann type boundary-exterior conditions.

Our second main concern is to study the existence of optimal solutions to optimal control problems involving the mixed operator $ \mathscr{L}$ with singular Dirichlet boundary-exterior data,  and to characterize the associated optimality conditions. More precisely, we shall consider the following two different optimal control problems:
\begin{equation}\label{OPC}
\min_{(u_1,u_2)\in \mathcal{Z}_{ad}} J_i((u_1,u_2)),\; i=1,2,
\end{equation}
subject to the constraint that the state $\psi\coloneqq \psi(u_1,u_2)$ solves the parabolic system \eqref{Eq:main_A1}. We recall that the control $(u_1,u_2) \in \mathcal{Z}_{ad}$ with $\mathcal{Z}_{ad}$ being a closed and convex subset of $\mathcal{Z}_{D}\coloneqq  L^2(\Gamma)\times L^{2}(\Sigma)$, which is endowed with the norm given by
\begin{equation*}
\|(u_{1},u_{2})\|_{ \mathcal{Z}_{D}}=\Big(\|u_{1}\|^2_{L^2(\Gamma)}+\|u_{2}\|^2_{ L^{2}(\Sigma)}\Big)^{\frac 12}.
\end{equation*}
The functionals $J_1$ and $J_2$ are given by
\begin{equation}\label{dJ1}
J_1(u_1,u_2)\coloneqq \frac{1}{2}\|\psi((u_{1},u_{2}))-z_d^1\|_{L^2(Q)}^2 + \frac{\beta}{2}\|(u_{1},u_{2})\|^{2}_{\mathcal{Z} _{D}}
\end{equation}
and
\begin{equation}\label{dJ2}
J_2(u_1,u_2)\coloneqq \frac{1}{2}\|\psi(T;(u_{1},u_{2}))-z_d^2\|_{H^{-1}(\Om)}^2 + \frac{\beta}{2}\|(u_{1},u_{2})\|^{2}_{\mathcal{Z} _{D}},
\end{equation}
where  $\beta>0$ is a real number,  $z_d^1\in L^2(Q) $,  $z_d^2\in H^{-1}(\Om)$,
and
\begin{align*}
\|\phi\|^2_{H^{-1}(\Omega)}:=\langle (-\Delta_D)^{-1}\phi,\phi\rangle_{H^1_0(\Omega),H^{-1}(\Omega)}.
\end{align*}
Here, $-\Delta_D$ is the realization in $L^2(\Om)$ of the Laplace operator $-\Delta$ with the zero Dirichlet boundary condition.  The functionals $J_1$ and $J_2$ can be replaced with more general functionals satisfying suitable conditions, without any substantial modification of the proofs. \medskip

The novelties and difficulties of the present paper can be summarized as follows.

\begin{enumerate}
\item For the first time, elliptic and parabolic equations associated with a mixed local-nonlocal operator and singular boundary-exterior data have been studied.

\item Since we are considering singular data, the right definition of solutions, their existence, and their fine regularity, are more challenging
 than the case of the single local, or the single nonlocal operator.  In particular, the regularity of solutions is crucial in the study of the optimal control problems.
 
 \item For singular boundary-exterior data, we have introduced the notion of solutions by transposition to the system \eqref{Eq:main_A1}.  This definition requires that solutions $\phi$ of the dual system associated with \eqref{Eq:main_A1}  satisfy $\partial_\nu \phi\in L^2(\Gamma)$ and $\mathcal N_s\phi\in L^2(\Sigma)$, where $\partial_\nu \phi$ is the classical normal derivative of $\phi$, and  $\mathcal N_s\phi$ denotes the nonlocal normal derivative of $\phi$ (see \eqref{NonlocalDeri}). We have been able to show this regularity only in the range of exponents $0<s\le 3/4$. The case $3/4<s<1$ remains an open problem.   In the classical local case $s=1$ or $s=0$ (in the situation of the present paper),  classical elliptic regularity results show that $\phi$ belongs to $L^2((0,T);H^2(\Omega))$ so that its normal derivative exists and belongs to $L^2((0,T);L^2(\pOm))$.  This seems not to be the case for the nonlocal case investigated here.  We shall give more details in Section \ref{Sec:Wellpo}.
 
\item  Let us notice that the operator $\mathscr{L}$ is a sum of the local operator $-\Delta$ and the nonlocal one $(-\Delta)^s$.  On the one hand, for this operator, regarding well-posedness and regularity of associated elliptic and parabolic systems, the nonlocal operator seems to be dominant as we shall see in Section \ref{Sec:Wellpo}. This shows that the operator $\mathscr{L}$ cannot be just seen as a simple perturbation of $-\Delta$ with a lower order operator. To see that, for example,  $\mathscr{L}\psi\in L^2(\Om)$ does not mean that $\Delta \psi\in L^2(\Omega)$ and $(-\Delta)^s\psi\in L^2(\Omega)$. This is the case only for certain values of $s$, that is, when $0<s\le 3/4$. 
On the other hand,  regarding the ultracontractivity property of the semigroup $(T(t))_{t\ge 0}$ mentioned above, the Laplace operator $-\Delta$ seems to be dominant.

 \item For the first time, optimal control problems associated with a mixed local-nonlocal operator and singular boundary-exterior data have been investigated.  The existence and uniqueness of minimizers and the characterization of the associated optimality conditions have been obtained under the assumption that $0<s\le 3/4$.
 This is a substantial extension of the nowadays well-known local case of the Laplace operator (see e.g.  the monograph \cite{lions1971} and the references therein), and the nonlocal case recently investigated in \cite{AKW,AVW1,AVW2,AntilWarma} and the references therein.
\end{enumerate}

It is a great idea from the authors in \cite{BDVV2020} for having introduced the operator $\mathscr{L}$ and initiated the study of its qualitative properties.

We mention that fractional order operators have recently received a great deal of attention due to the fact that they model many diverse real-life phenomena which could not be adequately
modeled by using classical local or non-fractional operators. These type of operators belong to the broad class of nonlocal operators and have merged as a modeling alternative in various branches of science. The theory of these operators has discovered many applications, for example in 
fluid dynamics \cite{Chen}, diffusion of biological species \cite{Viswanathan}, transitions across an interface,  multiscale behavior in cardiac tissue \cite{Bueno}, and phase field models \cite{Antil}.
Despite the numerous applications, still, there is a lot of undiscovered potential lying in the connection between local (induced by Brownian motion) and nonlocal (induced by L\'{e}vy process), since they can both be understood from the viewpoint of mathematical analysis and then combined in a natural way. However, there is no agreement on exactly how to define mixed local and nonlocal operators as a combination process of the Brownian and L\'{e}vy processes. For this reason, attempts have been made recently in \cite{BDVV2020}, to unify local and nonlocal operators in the single operator $\mathscr{L}$ given in \eqref{LOPERATO1}.
Operators of the kind \eqref{LOPERATO1} arise naturally from the superposition of two stochastic processes with different scales (namely, a classical random walk and a L\'evy flight): roughly speaking, when a particle can follow either of these two processes according to a certain probability, the associated limit diffusion equation is described by an operator of the form \eqref{LOPERATO1}. See also \cite{Cantrell} which discuss about the advection-mediated coexistence of competing species.\medskip

Optimal control problems of partial differential equations  involving classical local operators with boundary control through  Dirichlet and Neumann conditions have been widely investigated this last five decades. For instance, J. L. Lions \cite{lions1971} considered an optimal control problem with Neuman boundary observation subjected to an elliptic equation with Dirichlet boundary control. Using a transposition method,  the author proved the existence and uniqueness of solutions when the control is in  $L^2(\partial \Omega)$ and gave a sense to the normal derivative of the considered  observation in $H^{-1}(\partial \Omega)$. Then,  the author showed the existence of the optimal control and gave its characterization. These results have been  extended by the same author to an optimal control problem with Neuman boundary observation subjected to a parabolic equation with Dirichlet boundary control  in \cite{lions1971}.
Note that for both elliptic and parabolic equations, if the set of admissible controls are respectively $L^2(\partial \Omega)$ and $L^2((0,T)\times \partial \Omega)$,  and the domain $\Omega$ is smooth enough, then the regularity of the control and the  normal derivative of the adjoint state are the same. 

We observe that there is a fundamental difference between the regularity of solutions of parabolic problems with Dirichlet $L^2$-boundary data and the regularity of  optimal solutions of the associated control problems.  In the classical local case ($s=1$ or $s=0$ in the setting of the present paper),   it turns out that the latter is always higher.
In fact, Lions and Magenes \cite[Section 2.2]{Lions_682} were first to prove that solutions of the open loop system with $L^2(\Gamma)$-Dirichlet data belong to $L^2((0,T);H^{1/4}(\Omega))\cap H^{1/2}((0,T);L^2(\Omega))$.
In \cite{Lasiecka_78},   Lasiecka used a semigroup  approach to solve a  Dirichlet boundary optimal control and a distributed observation subjected to  parabolic systems. The author proved that if $\Omega$ is smooth enough or $\Omega$ is a parallelepiped,  then the optimal control belongs  to  $L^2((0,T); H^{1/2}( \partial \Omega))$ which is more regular that the allowed control space, $L^2(\Gamma)$,  and the optimal trajectory is in $L^2((0,T); H^1(\Omega))$.  These results were significantly improved by Lasiecka and Triggiani \cite{Lasiecka_83,LT-20} (see also \cite{Sorine_82}) for smooth domains.  Actually,  considering a more general parabolic problem with a control in $L^2(\Gamma)$ acting on a Dirichlet  boundary,  and a distributed observation,  using a transposition method, the authors proved  that  the optimal control  belongs to $L^2((0,T);H^{1/2}( \partial \Omega)) \cap H^{1/4}((0,T); L^2( \partial \Omega))$ and the optimal trajectory is in $L^2((0,T); H^1(\Omega)) \cap H^{1/2}((0,T); L^2( \partial \Omega))$.  An extension to non-autonomous systems is contained in \cite{AFT}.
Whether such a result can be obtained in the fractional case considered in the present paper is still an interesting open problem.  Notice that for the fractional case,  we have an exterior control that is no longer a trace of the state on $\Gamma:=\pOm\times (0,T)$, but a restriction of the state in $(\mathbb R^N\setminus \Om)\times(0,T)$.  
For more information on boundary optimal control problems,  we refer to \cite{Lions_681,Lions_682,lions1971,Fredi} and the references therein. 

Recently,  partial differential equations with time fractional derivatives and/or the Fractional Laplace operator have emerged as excellent tools to described phenomenon with memory effects. Hence, the control of such equations are  of great interest.  However, for time fractional diffusion equations involving classical second order elliptic operators,  boundary conditions are the same as for the classical diffusion equations.  Instead,  for diffusion equations involving the fractional  Laplace operator,  they may not have boundary conditions but external conditions. These lead to boundary control problems for time fractional diffusion equations involving second order elliptic operators, and external control problems for diffusion equations involving the fractional  Laplace operator. Compared to the classical  diffusion equations,  the literature on  quadratic boundary control associated to such equations  is scarce.  In \cite{Dorville_2011},  R. Dorville  et al. used a transposition method to study a Dirichlet boundary control problem associated to a time fractional diffusion equation involving Riemann-Liouville fractional derivatives of order $\alpha \in (0,1)$ and a final time observation of Riemann-Liouville integrals of order $\alpha \in (0,1)$. They succeeded in  proving the existence and the uniqueness of the optimal control that they characterized by an optimality system. They showed  that the optimal control is in  $L^2((0,T);H^{1/2}( \partial \Omega))$ when the domain is smooth enough and the set of the admissible controls is $L^2((0,T)\times \partial \Omega)$.  

Using variational methods,  the authors in \cite{AKW} investigated an external optimal control problem of a nonlocal  elliptic equation involving the fractional Laplace operator with distributed observation. They proved the existence and uniqueness of the optimal control and gave the  optimality system that characterized this optimal control. They also noticed that even if the domain $\Omega$ is smooth enough,  contrary to the results obtained for the classical Laplace operator, the external optimal control remains in  $L^2((0,T)\times \R^N\setminus (\Omega))$, the allowed control space. This results was extended   in \cite{AVW1} to the  external optimal control problem of parabolic equations involving the fractional Laplace operator with distributed observation. 

In this paper, we consider two boundary quadratic  optimal control problems  subject to a diffusion equation involving the classical and the fractional Laplace operators. We have two controls: an external control and a control  acting through a Dirichlet condition. We first consider  distributed observations and prove the existence and uniqueness of the optimal control. Then,  observing as in \cite{Lasiecka_83} that for an allowed control in $L^2((0,T)\times \partial \Omega)$, it may happen that the response $\psi$ is such that $\psi(\cdot,T)\notin L^2(\Omega)$, we consider for the second control problem a final time observation in $H^{-1}(\Omega)$ and prove the existence and uniqueness of the optimal control.  In both optimal control problems we observe that if the set of admissible controls is $L^2(\Gamma)\times L^2(\Sigma) $,  then even though, we deal with a mixed local and nonlocal diffusion operator, optimal control belongs to $L^2((0,T);H^{1/2}( \partial \Omega))\times L^2((0,T); H^{1}_{\rm loc}(  \R^N\setminus \Omega))$ and the  optimal trajectory remains in $L^2((0,T)\times \R^N)$ whenever $0<s\le 3/4$.  We have not been successful in the case $0< s<3/4$.  Of course the cases $s=0$ and $s=1$ correspond to the classical Laplace operator mentioned above.\\

The rest of the paper is structured as follows. In Section \ref{Sec:preli} we fix some notations, give a rigorous definition of the fractional Laplace operator, and introduce the function spaces needed to study our problems. The results of well-posedness and regularity of solutions to the elliptic problem \eqref{E-E}, and the parabolic problem \eqref{Eq:main_A1}, are contained in Sections \ref{sec-31} and \ref{sec-32}, respectively.  In Section \ref{First-OCP} we show the existence and uniqueness of optimal solutions to the control problem \eqref{OPC}-\eqref{dJ1}, and we characterize the associated optimality conditions. The same study for the control problem  \eqref{OPC}-\eqref{dJ2} is contained in Section \ref{sec-OCP}.

\section{Notations and Preliminaries}\label{Sec:preli}

In this section we fix some notations and recall some known results as they are needed throughout the paper. These results can be found for example in  \cite{AVW1,AntilWarma,BDVV2020,BBC,Caf1,NPV,GW,GW-CPDE,GW2,Gris,War,War-In} and the references therein.  \medskip

Let us first give a rigorous definition of the fractional Laplacian. Given $0<s<1$,  we let
\begin{align*}
\mathcal L_s^{1}(\RR^N)\coloneqq \left\{w:\RR^N\to\RR\;\mbox{ measurable and}\; \int_{\RR^N}\frac{|w(x)|}{(1+|x|)^{N+2s}}\;\dx<\infty\right\}.
\end{align*}
For $w\in \mathcal{L}_s^{1}(\RR^N)$ and $\varepsilon>0$, we set
\begin{align*}
(-\Delta)_\varepsilon^s w(x)\coloneqq C_{N,s}\int_{\{y\in\RR^N:\;|x-y|>\varepsilon\}}\frac{w(x)- w(y)}{|x-y|^{N+2s}}\;\dy,\;\;x\in\RR^N,
\end{align*}
where $C_{N,s}$ is a normalization constant given by
\begin{align}\label{CNs}
C_{N,s}\coloneqq \frac{s2^{2s}\Gamma\left(\frac{2s+N}{2}\right)}{\pi^{\frac{N}{2}}\Gamma(1-s)}.
\end{align}
The fractional Laplacian $(-\Delta)^s$ is defined by the following singular integral:
\begin{align}\label{fl_def}
(-\Delta)^sw(x)\coloneqq C_{N,s}\,\mbox{P.V.}\int_{\RR^N}\frac{w(x)-w(y)}{|x-y|^{N+2s}}\;\dy =
\lim_{\varepsilon\downarrow 0}(-\Delta)_\varepsilon^s w(x),\;\;x\in\RR^N,
\end{align}
provided that the limit exists for a.e. $x\in\RR^N$.  We refer to \cite{NPV} and their references regarding the class of functions for which the  limit in \eqref{fl_def} exists for a.e. $x\in\RR^N$.\medskip

Next, we introduce the function spaces needed to study our problems. We start with fractional order Sobolev spaces.

Let $\Omega\subset\RR^N$ be an arbitrary open set. Given $0<s<1$ a real number,  we let
\begin{align*}
H^{s}(\Omega)\coloneqq \left\{w \in L^2(\Omega):\;\int_{\Omega}\int_{\Omega}\frac{|w(x)-w(y)|^2}{|x-y|^{N+2s}}\;\dx\dy<\infty\right\},
\end{align*}
and we endow it with the norm defined by
\begin{align*}
\|w\|_{H^{s}(\Omega)}\coloneqq\left(\int_{\Omega}|w(x)|^2\;\dx+
\int_{\Omega}\int_{\Omega}\frac{|w(x)-w(y)|^2}{|x-y|^{N+2s}}\;\dx\dy\right)^{\frac 12}.
\end{align*}
We set
\begin{align*}
H_0^{s}(\Omega)\coloneqq\Big\{w\in H^{s}(\RR^N):\;w=0\;\mbox{ in }\;\RR^N\setminus\Omega\Big\}.
\end{align*}
Then, $H_0^{s}(\Omega)$ endowed with the norm
\begin{equation}\label{norm0}
\|w\|_{H_0^{s}(\Omega)}=\left(\int_{\R^N}\int_{\R^{N}}
\frac{|w(x)-w(y)|^2}{|x-y|^{N+2s}}\;\dx\, \dy\right)^{1/2},
\end{equation}
is a Hilbert space (see e.g. \cite[Lemma 7]{servadei}).  
We let $H^{-s}(\Omega)\coloneqq (H_0^s(\Omega))^\star$ be the dual space of $H_0^s(\Omega)$ with respect to the pivot space $L^2(\Om)$, so that we have the following continuous and dense embeddings (see e.g. \cite{ATW}):
\begin{equation}\label{injection1}
 H_0^{s}(\Omega)\hookrightarrow L^2(\Omega)\hookrightarrow H^{-s}(\Omega).
 \end{equation}
Under the assumption that $\Omega$ is bounded and has a Lipschitz continuous boundary,  we have that $\mathcal D(\Om)$ is dense in $H_0^s(\Om)$ (for every $0<s<1$,  see e.g. \cite{FSV}), and by \cite[Chapter 1]{Gris}, if $0<s\ne 1/2<1$, then
 \begin{align*}
 H_0^s(\Omega)=\overline{\mathcal D(\Omega)}^{H^s(\Omega)},
 \end{align*}
with equivalent norms, where $\mathcal D(\Omega)$ denotes the space of all continuously infinitely differentiable functions with compact support in $\Omega$. But if $s=1/2$, then $H_0^s(\Omega)$ is a proper subspace of $\overline{\mathcal D(\Omega)}^{H^s(\Omega)}$.

For more information on fractional order Sobolev spaces, we refer to \cite{NPV,Gris,War} and their references.\medskip

Next, for $\varphi\in H^{s}(\RR^N)$ we introduce the {\em nonlocal normal derivative $\mathcal N_s$} given by
\begin{align}\label{NonlocalDeri}
\mathcal N_{s}\varphi(x)\coloneqq C_{N,s}\int_{\Omega}\frac{\varphi(x)-\varphi(y)}{|x-y|^{N+2s}}\; \dy,~~~~x\in\RR^N\setminus\bOm,
\end{align}
where $C_{N,s}$ is the constant given in \eqref{CNs}.   We notice that the nonlocal normal derivative $\mathcal N_s$ has been first introduced in \cite{DRV}.

\begin{remark}\label{rem-21}
It has been shown in \cite{GRSU,GSU} that the operator $\mathcal N_s$ maps $H^s(\RR^N)$ into $H_{\rm loc}^s(\Omb)$
Furthermore, if $u\in H_0^1(\Omega)$ and $(-\Delta)^s u\in L^2(\Omega)$, then $\mathcal N_su\in L^2(\Omb)$, and there is a constant $C>0$ such that
\begin{align}\label{NND}
\|\mathcal N_su\|_{L^2(\Omb)}\le C\|u\|_{H_0^1(\Omega)}.
\end{align}
\end{remark}

The following integration by parts formula is contained in \cite{DRV, War-ACE}. Let $\varphi\in H^{s}(\RR^N)$ be such that $(-\Delta)^s \varphi\in L^2(\Omega)$ and $\mathcal N_s\varphi\in L^2(\Omb)$. Then for every $\psi\in H^{s}(\RR^N)$, the  identity
\begin{align}\label{Int-Part}
\frac{C_{N,s}}{2}\int\int_{\RR^{2N}\setminus(\Omc)^2}&
\frac{(\varphi(x)-\varphi(y))(\psi(x)-\psi(y))}{|x-y|^{N+2s}}\;\dx\, \dy\notag\\
=&\int_{\Omega}\psi(-\Delta)^s\varphi\;\dx+\int_{\Omc}\psi\mathcal N_s \varphi\;\dx
\end{align}
holds.

Observing that
\begin{align*}
\RR^{2N}\setminus(\RR^N\setminus\Omega)^2=(\Omega\times\Omega)\cup (\Omega\times(\RR^N\setminus\Omega))\cup((\RR^N\setminus\Omega)\times\Omega),
\end{align*}
we have that if $\varphi=0$ in $\Omc$ or $\psi=0$ in $\Omc$, then
\begin{align*}
&\int\int_{\RR^{2N}\setminus(\RR^N\setminus\Omega)^2}
\frac{(\varphi(x)-\varphi(y))(\psi(x)-\psi(y))}{|x-y|^{N+2s}}\dx\,\dy\\
=&\int_{\R^N}\int_{\R^N}\frac{(\varphi(x)-\varphi(y))(\psi(x)-\psi(y))}{|x-y|^{N+2s}}\dx\,\dy.
\end{align*}

Throughout the remainder of the paper,   we shall let the bilinear form $\mathcal F:H_0^{s}(\Om)\times H_0^{s}(\Om)\to\RR$ be given by
\begin{equation}\label{defF}
\mathcal{F}(\varphi,\psi)\coloneqq \frac{C_{N,s}}{2}\dis \int_{\RR^N}\int_{\RR^N}\frac{(\varphi(x)-\varphi(y))(\psi(x)-\psi(y))}{|x-y|^{N+2s}}\;\dx\, \dy.
\end{equation}

Next, we introduce the classical first order Sobolev space
\begin{align*}
H^1(\Omega):=\left\{u\in L^2(\Om):\;\int_{\Om}|\nabla u|^2\;\dx<\infty\right\}
\end{align*}
which is endowed with the norm defined by
\begin{align*}
\|u\|_{H^1(\Omega)}=\left(\int_{\Omega}|u|^2\;\dx+\int_{\Om}|\nabla u|^2\;\dx\right)^{\frac 12}.
\end{align*}

In order to study the  solvability of~\eqref{Eq:main_A1}, we  shall also need  the following function space
\begin{equation} \label{eq.defXOmega}
H_0^1(\Omega) \coloneqq \Big\{w\in H^1(\R^N):\,\text{$w \equiv 0$ in $\Omc$}\Big\},
\end{equation}
which is a  (real) Hilbert space endowed with the scalar product
\[
 \int_{\Omega} \nabla w\cdot\nabla \varphi\,\dx,
\]
and  associated norm
\begin{align}\label{H1}
\|\varphi\|_{H_0^1(\Omega)}\coloneqq \|\nabla \varphi\|_{L^2(\Omega)}.
\end{align}

Furthermore, the classical Poincar\'{e} inequality holds in $H_0^1(\Omega)$. That is,  there is a constant $C > 0$ such that
\begin{equation} \label{eq.PoincareX}
\|\varphi\|_{L^2(\Omega)} \leq C\,\|\varphi\|_{H_0^1(\Omega)}\qquad
\text{for all $\varphi\in H_0^1(\Omega)$}.
\end{equation}
We shall denote by $H^{-1}(\Omega)$ the dual space of $H_0^1(\Omega)$ with respect to the pivot space $L^2(\Omega)$ so that we have the following continuous and dense embeddings:
\begin{equation*}
 H_0^{1}(\Om)\hookrightarrow L^2(\Omega)\hookrightarrow H^{-1}(\Om).
 \end{equation*}
Here also, if $\Omega$ is bounded and has a Lipschitz continuous boundary,  then by \cite[Chapter 1]{Gris}
\begin{align*}
 H_0^1(\Omega)=\overline{\mathcal D(\Omega)}^{H^1(\Omega)}.
 \end{align*}
In addition, under the same assumption on $\Omega$, every function $u\in H^1(\Omega)$ has a trace $u|_{\pOm}$ that belongs to $H^{1/2}(\pOm)$,  and the mapping trace
\begin{align}\label{trace}
H^1(\Omega)\to H^{\frac 12}(\pOm),\;\; u\mapsto u|_{\pOm}
\end{align}
is continuous and surjective.\medskip

Now assume that $\Omega$ is a bounded open set with a Lipschitz continuous boundary. Then every function $u\in H^2(\Omega)$ has a normal derivative $\partial_\nu u:=\nabla u\cdot\nu$ that belongs to $H^{1/2}(\pOm)\hookrightarrow L^2(\pOm,\sigma)$. Here, $\sigma$ denotes the restriction to $\pOm$ of the $(N-1)$-dimensional Hausdorff measure which coincides with the Lebesgue surface measure since $\Omega$ has a Lipschitz continuous boundary, and $\nu$ is the outer normal vector at the boundary. More precisely, there is a constant $C>0$ such that for every $u\in H^2(\Omega)$,
\begin{align}\label{ND}
\|\partial_\nu u\|_{L^2(\pOm)}\le C\|u\|_{H^2(\Omega)}.
\end{align}
In addition, for every $u\in H^2(\Omega)$ and $v\in H^1(\Omega)$, the integration by parts formula
\begin{align}\label{IBP-L}
-\int_{\Omega}v\Delta u \;\dx=\int_{\Omega}\nabla u\cdot\nabla v\;\dx-\int_{\pOm}v\partial_\nu u\; \mathrm{d}\sigma
\end{align}
holds.  We refer to \cite[Lemma 1.5.3.7]{Gris} for the proof. 

Assume that $\Omega$ is a bounded open set of class $C^{1,1}$ and let $E(\Delta,L^2(\Omega)):=\{u\in H^1(\Omega):\;\Delta u\in L^2(\Omega)\}$. Then $C^\infty(\bOm)$ is dense in $E(\Delta,L^2(\Omega))$ and every $u\in E(\Delta,L^2(\Omega))$  has a normal derivative $\partial_\nu u:=\nabla u\cdot\nu$ that belongs to $H^{-1/2}(\pOm):=(H^{1/2}(\pOm,\sigma))^\star$.  More precisely, there is a constant $C>0$ such that for every $u\in E(\Delta,L^2(\Omega))$,
\begin{align}\label{ND-2}
\|\partial_\nu u\|_{H^{-1/2}(\pOm)}\le C\left(\|u\|_{L^2(\Omega)}+\|\Delta u\|_{L^2(\Omega)}\right).
\end{align}
In addition, for every $u\in E(\Delta,L^2(\Omega))$ and $v\in H^1(\Omega)$, the integration by parts formula
\begin{align}\label{IBP-L-2}
-\int_{\Omega}v\Delta u \;\dx=\int_{\Omega}\nabla u\cdot\nabla v\;\dx-\langle \partial_\nu u,v\rangle_{H^{-1/2}(\pOm),H^{1/2}(\pOm)}
\end{align}
holds (see e.g.  \cite[Formula 1.5.3.30, pp 62]{Gris}).   We refer to \cite{Adams,Bur,GiTr,Gris,MaPo} and the references therein for more details on this topic.\medskip

Throughout the remainder of the paper, without any mention, we shall assume that $\Omega\subset\RR^N$ is a bounded domain with a  smooth  boundary $\pOm$. Under this assumption, we have the following continuous and dense embedding for every $0<s<1$ (see e.g. \cite{Gris,NPV}):
\begin{align}\label{H1-to-Hs}
H_0^1(\Omega)\hookrightarrow H_0^s(\Omega).
\end{align}
In view of \eqref{H1} and \eqref{H1-to-Hs}, we can deduce that
\begin{equation}\label{scalaireX}
(\varphi,\psi)_{H_0^1(\Om)}:=\mathcal{F}(\varphi,\psi)+
\int_\Omega \nabla \varphi\cdot\nabla \psi \, \dx
\end{equation}
defines a scalar product on $H_0^1(\Omega)$ with associated norm
\begin{align}\label{H1-1}
\|\varphi\|_{H_0^1(\Omega)}\coloneqq \left(\mathcal{F}(\varphi,\varphi)+ \int_\Omega |\nabla \varphi|^2\, \dx\right)^{\frac 12}.
\end{align}
The norm given in \eqref{H1-1} is equivalent to the one given in \eqref{H1}.

Throughout the remainder of the article, otherwise stated, $H_0^1(\Omega)$ will be endowed with the scalar product and norm given in \eqref{scalaireX} and \eqref{H1-1}, respectively.

We conclude this section by giving the following result. Let $T>0$ be a real number and
set
\begin{equation}\label{defW0T}
W(0,T):= \left\{\zeta \in L^2((0,T);H_0^1(\Omega)): \zeta_{t} \in L^2((0,T);H^{-1}(\Omega))\right\}.
\end{equation}
Since $H_0^1(\Omega)$ is a real Hilbert space, it follows from Lions--Magenes  \cite[Theorem II.5.12]{BF13} and \cite{Tommaso} that
 $W(0,T)$ endowed with the  norm
\begin{equation}\label{normW0T}
\|\zeta\|_{W(0,T)}\coloneqq \left(\|\zeta\|^2_{L^2((0,T);H_0^1(\Omega))}+\|\zeta_t\|^2_{L^2((0,T);H^{-1}(\Omega))}\right)^{\frac 12},
\end{equation}
is a Hilbert space. Moreover, we have the following continuous embedding:
\begin{equation}\label{contWTA}
W(0,T)\hookrightarrow C([0,T],L^2(\Omega)).
\end{equation}
Furthermore, from \eqref{eq.PoincareX} and the Lions--Aubin Lemma \cite[Theorem II.5.16]{BF13}
 the following embedding is compact 
  \begin{equation}\label{Lions-compactness}
W(0,T) \hookrightarrow L^2((0,T); L^2(\Omega)).
  	\end{equation}

\section{Well-posedness of the state equation} \label{Sec:Wellpo}

In this section we are interested in establishing some existence, uniqueness and regularity results of the state equation  \eqref{Eq:main_A1}
that will be needed in the proof of the existence of minimizers to the optimal control problem \eqref{Eq:main_A}. We recall that without any mention, $\Omega\subset\RR^N$ ($N\ge 1$) is a bounded domain with a smooth  boundary $\pOm$.

\subsection{The elliptic problem}\label{sec-31}
We start with the stationary problem. That is,  we consider the following non-homogeneous Dirichlet problem associated with the operator $\mathscr{L}$, as defined in~\eqref{LOPERATO1}. That is,
\begin{equation}\label{EDP}
\begin{cases}
 \mathscr{L}w =f\;\;&\mbox{ in }\;\Omega,\\
w=g_1&\mbox{ on }\;\pOm,\\
w=g_2&\mbox{ in }\;\Omc.
\end{cases}
\end{equation}
Even if we are not considering control problems for elliptic equations, the complete analysis of \eqref{EDP} will be crucial in the study of the associated time dependent problem, and is also interesting in its own, independently of the applications given in the present paper.

To introduce our notion of solutions to the system \eqref{EDP}, we start with the simple case $g_1=0$ on $\pOm$ and $g_2=0$ in $\Omc$.

\begin{definition} \label{def.weaksol}
Let $f\in H^{-1}(\Omega)$, $g_2=0$ in $\Omc$ and $g_1=0$ on $\pOm$.  A function $w\in H_0^1(\Omega)$ is said to be a \emph{weak solution of}  \eqref{EDP}  if for every  function $\varphi\in H_0^1(\Omega)$, the identity
 \begin{equation} \label{eq.weaksoldef}
\int_{\Omega} \nabla w\cdot\nabla \varphi\,\dx +\mathcal F(w,\varphi)
 = \langle f,\varphi\rangle_{H^{-1}(\Omega), H_0^1(\Omega)}
\end{equation}
holds, where we recall that the bilinear form $\mathcal F$ has been defined in \eqref{defF}.
\end{definition}
The following existence result can be  established by using the classical Lax-Milgram Lemma. We refer to \cite[Theorem 1.1]{BDVV2020} for more details.

\begin{proposition}\label{proposi-33-1}
Let $g_2=0$ in $\Omc$ and $g_1=0$ on $\pOm$.  Then for every $f\in H^{-1}(\Omega)$, there is a unique function $w\in H_0^{1}(\Omega)$ satisfying \eqref{EDP} in the sense of Definition~\ref{def.weaksol}. In addition, there is a constant $C>0$ such that
\begin{align*}
\|w\|_{H_0^{1}(\Omega)}\le C\|f\|_{H^{-1}(\Omega)}.
\end{align*}
\end{proposition}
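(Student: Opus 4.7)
The plan is to apply the Lax--Milgram Lemma to the bilinear form
\[
a(w,\varphi) := \int_{\Omega}\nabla w\cdot\nabla\varphi\,\dx + \mathcal{F}(w,\varphi),\qquad w,\varphi\in H_0^1(\Omega),
\]
on the Hilbert space $H_0^1(\Omega)$ equipped with the norm~\eqref{H1-1}. The first step is to verify continuity of $a$: using the Cauchy--Schwarz inequality in $L^2(\Omega)$ for the local part, and in $L^2(\R^{2N}, |x-y|^{-(N+2s)}\dx\,\dy)$ for the nonlocal part (directly from the definition~\eqref{defF} of $\mathcal{F}$), one obtains
\[
|a(w,\varphi)| \le \|w\|_{H_0^1(\Omega)}\,\|\varphi\|_{H_0^1(\Omega)},
\]
where the norm on the right is exactly~\eqref{H1-1}. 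Coercivity is even more transparent: by construction,
\[
a(w,w) = \int_\Omega|\nabla w|^2\,\dx + \mathcal{F}(w,w) = \|w\|_{H_0^1(\Omega)}^2.
\]

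Next, since $H^{-1}(\Omega)$ is defined as the dual of $H_0^1(\Omega)$ with respect to the pivot $L^2(\Omega)$, the linear form
\[
\varphi\mapsto \langle f,\varphi\rangle_{H^{-1}(\Omega),H_0^1(\Omega)}
\]
is, by definition, continuous on $H_0^1(\Omega)$ with norm bounded by $\|f\|_{H^{-1}(\Omega)}$. The Lax--Milgram Lemma then yields a unique $w\in H_0^1(\Omega)$ satisfying~\eqref{eq.weaksoldef} for every $\varphi\in H_0^1(\Omega)$, which is exactly Definition~\ref{def.weaksol}.

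Finally, to obtain the quantitative estimate, I would test~\eqref{eq.weaksoldef} with $\varphi = w$ and use the coercivity identity $a(w,w)=\|w\|_{H_0^1(\Omega)}^2$ together with the duality bound $|\langle f,w\rangle|\le \|f\|_{H^{-1}(\Omega)}\|w\|_{H_0^1(\Omega)}$, dividing by $\|w\|_{H_0^1(\Omega)}$ (the case $w=0$ being trivial). There is essentially no obstacle here: the definition of the norm on $H_0^1(\Omega)$ in~\eqref{H1-1} was tailored to make $a$ equal to the induced inner product, so Lax--Milgram reduces to the Riesz representation theorem and the argument is entirely mechanical. The only point to keep in mind is to use the norm~\eqref{H1-1} rather than the purely local one~\eqref{H1}; equivalence of the two (mentioned just after~\eqref{H1-1}) allows one to transfer the resulting estimate to either norm if desired.
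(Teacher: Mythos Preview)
Your proof is correct and follows exactly the approach the paper indicates: the paper states that the result is established by the classical Lax--Milgram Lemma (referring to \cite[Theorem 1.1]{BDVV2020} for details), and your verification of continuity and coercivity of $a$ on $H_0^1(\Omega)$ with the norm~\eqref{H1-1} is precisely how this is carried out. There is nothing to add.
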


Next, we consider the case of smooth non-zero boundary-exterior  data.

\begin{definition}\label{def-sol}
Let $f\in H^{-1}(\Omega)$. Let $g_2\in H^{1}(\Omb)$ and  $g_1\in H^{1/2}(\pOm)$ be such that $g_1=g_2|_{\pOm}$. Let $G\in H^{1}(\RR^N)$ be such that $G|_{\Omb}=g_2$.
A function $w\in H^{1}(\RR^N)$ is said to be a weak solution of  \eqref{EDP} if $w-G\in H_0^{1}(\Om)$ and the identity
\begin{align}\label{Weakforulinho}
\int_{\Omega} \nabla w\cdot\nabla \varphi\,\dx +\mathcal F(w,\varphi)= \langle f,\varphi\rangle_{H^{-1}(\Omega), H_0^1(\Omega)}
\end{align}
holds for every $\varphi\in H_0^{1}(\Omega)$.
\end{definition}
The following existence result can be easily established. We also refer to \cite[Corollary 2.6]{BDVV2020} where the same result has been proved under the assumption that the function $g_2$ is smooth enough. The version given here can be obtained by using  \cite[Corollary 2.6]{BDVV2020} and an approximation argument.

\begin{proposition}\label{proposi-33}
Let and $g_1$ and $g_2$ be as in Definition \ref{def-sol} and $f\in H^{-1}(\Omega)$. Then, there is a unique $w\in H^{1}(\RR^N)$ satisfying \eqref{EDP} in the sense of Definition~\ref{def-sol}. In addition, there is a constant $C>0$ such that
\begin{align*}
\|w\|_{H^{1}(\RR^N)}\le C\left(\|f\|_{H^{-1}(\Omega)}+\|g_2\|_{H^{1}(\Omb)}\right).
\end{align*}
\end{proposition}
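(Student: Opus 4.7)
The plan is to reduce the non-homogeneous system to the homogeneous problem already solved in Proposition~\ref{proposi-33-1} by a standard lifting argument.

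First I would produce a lifting $G\in H^1(\RR^N)$ with $G|_{\Omc}=g_2$ and $\|G\|_{H^1(\RR^N)}\le C\|g_2\|_{H^1(\Omb)}$: since $\Omega$ is smooth, $\Omc$ admits a bounded Sobolev extension operator to $\RR^N$, and the resulting $G$ has interior trace $g_1$ on $\pOm$ thanks to the compatibility hypothesis $g_1=g_2|_{\pOm}$. Setting $w=v+G$, Definition~\ref{def-sol} becomes equivalent to finding $v\in H_0^1(\Omega)$ such that, for every $\varphi\in H_0^1(\Omega)$,
\begin{equation*}
\int_{\Omega}\nabla v\cdot\nabla\varphi\,\dx+\mathcal F(v,\varphi)
=\langle f,\varphi\rangle_{H^{-1}(\Omega),H_0^1(\Omega)}-\int_{\Omega}\nabla G\cdot\nabla\varphi\,\dx-\mathcal F(G,\varphi).
\end{equation*}

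The second step would be to verify that the right-hand side above defines an element $F_G\in H^{-1}(\Omega)$. The term $\langle f,\cdot\rangle$ lies in $H^{-1}(\Omega)$ by hypothesis, and the classical contribution is controlled by $\|\nabla G\|_{L^2(\Omega)}\|\varphi\|_{H_0^1(\Omega)}$. The point needing some care is the nonlocal term: applying Cauchy--Schwarz to $\mathcal F(G,\varphi)$ and using the embedding $H^1(\RR^N)\hookrightarrow H^s(\RR^N)$ (valid since $s<1$) together with $H_0^1(\Omega)\hookrightarrow H_0^s(\Omega)$ from \eqref{H1-to-Hs}, I obtain $|\mathcal F(G,\varphi)|\le C\|G\|_{H^1(\RR^N)}\|\varphi\|_{H_0^1(\Omega)}$. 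Proposition~\ref{proposi-33-1} then yields a unique $v\in H_0^1(\Omega)$ solving the reduced equation, with $\|v\|_{H_0^1(\Omega)}\le C\|F_G\|_{H^{-1}(\Omega)}$.

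Finally I would set $w\coloneqq v+G\in H^1(\RR^N)$ to obtain existence, and combine the estimates above with the lifting bound to reach $\|w\|_{H^1(\RR^N)}\le C(\|f\|_{H^{-1}(\Omega)}+\|g_2\|_{H^1(\Omb)})$. For uniqueness, any two solutions differ by an element of $H_0^1(\Omega)$ satisfying the homogeneous equation, hence they coincide by Proposition~\ref{proposi-33-1}; this also shows that $w$ is independent of the lifting chosen. I do not anticipate a genuine obstacle, since the argument is essentially bookkeeping; the only place requiring real attention is ensuring that the mixed bilinear form $\mathcal F(G,\varphi)$ is well defined and continuous on $H^1(\RR^N)\times H_0^1(\Omega)$, which is exactly what legitimizes transferring the nonlocal part of $\mathscr L$ across the lifting.
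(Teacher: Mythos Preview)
Your lifting argument is correct and complete; there is no gap. The paper itself does not spell out a proof of this proposition---it simply points to \cite[Corollary 2.6]{BDVV2020} for smooth $g_2$ and says the general $H^1$ case follows by an approximation argument. Your route is more direct: by producing the Sobolev extension $G$ at the outset and reducing to Proposition~\ref{proposi-33-1}, you avoid any density/approximation step entirely. The only thing to be mindful of is notational: in \eqref{defF} the form $\mathcal F$ is declared on $H_0^s(\Omega)\times H_0^s(\Omega)$, so when you write $\mathcal F(G,\varphi)$ with $G\in H^1(\RR^N)$ you are tacitly using its obvious extension to $H^s(\RR^N)\times H^s(\RR^N)$; this is exactly what Definition~\ref{def-sol} itself does, and your Cauchy--Schwarz bound via $H^1(\RR^N)\hookrightarrow H^s(\RR^N)$ and \eqref{H1-to-Hs} is the right justification.
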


\begin{remark}
We notice that in the situation of Definitions \ref{def.weaksol}- \ref{def-sol}, and Propositions \ref{proposi-33-1}-\ref{proposi-33}, the system \eqref{EDP} becomes
\begin{equation*}
\mathscr{L}w =f\;\;\mbox{ in }\;\Omega,\;\;\; w=g_2\mbox{ in }\;\Omc,
\end{equation*}
that is, the condition on $\pOm$ is not needed in order to have a well-posed problem.
\end{remark}

Finally, we consider the case of non-smooth boundary-exterior  data.  This case has not been discussed in  \cite{BDVV2020} and we need a new notion of solutions that we introduce next.

\begin{definition}\label{EVWS}
Let $f\in H^{-1}(\Omega)$, $g_1\in L^2(\pOm)$, and $g_2\in L^2(\Omc)$. A function $w\in L^2(\RR^N)$ is called a very-weak solution (or a solution by transposition) of  \eqref{EDP},  if the identity
\begin{align}\label{e34}
\int_{\Omega}w \mathscr{L}\varphi\;\dx= \langle f,\varphi\rangle_{H^{-1}(\Omega), H_0^1(\Omega)} -\int_{\pOm}g_1\partial_\nu\varphi\;\mathrm{d}\sigma-\int_{\Omb}g_2\mathcal N_s\varphi\;\dx
\end{align}
holds, for every $\varphi\in \mathbb V:=\Big\{\varphi\in H_0^1(\Omega):\; \mathscr{L}\varphi\in L^2(\Omega)\Big\}$.
\end{definition}

We notice that Definition \ref{EVWS} of very-weak solutions makes sense if every function $\varphi\in\mathbb V$ satisfies $\partial_\nu\varphi \in L^2(\pOm)$,  and $\mathcal N_s\varphi\in L^2(\Omb)$.

We have the following existence theorem which is the main result of this section.

\begin{theorem}\label{th-36}
Let $0<s\le 3/4$.  Then for every $f\in H^{-1}(\Omega)$, $g_1\in L^2(\pOm)$ and $g_2\in L^2(\Omc)$, the system \eqref{EDP} has a unique very-weak solution $w\in L^2(\RR^N)$ in the sense of Definition \ref{EVWS}, and  there is a constant $C>0$ such that
\begin{align}\label{e35}
\|w\|_{L^2(\RR^N)}\le C\left(\|f\|_{H^{-1}(\Omega)}+\|g_1\|_{L^2(\pOm)}+\|g_2\|_{L^2(\Omc)}\right).
\end{align}
In addition, if $g_1$ and $g_2$ are as in Definition \ref{def-sol}, then the following assertions hold.
\begin{enumerate}
\item Every weak solution of \eqref{EDP}  is also a very-weak solution.
\item Every very-weak solution of \eqref{EDP}  that belongs to $H^1(\RR^N)$ is also a weak solution.
\end{enumerate}
\end{theorem}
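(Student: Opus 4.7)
The approach is to construct $w$ by the classical method of transposition. For each $\xi\in L^2(\Omega)$, Proposition~\ref{proposi-33-1} gives a unique $\varphi_\xi\in H_0^1(\Omega)$ satisfying $\mathscr{L}\varphi_\xi=\xi$ in $\Omega$ with zero boundary and exterior data, together with the bound $\|\varphi_\xi\|_{H_0^1(\Omega)}\le C\|\xi\|_{L^2(\Omega)}$. If one can show in addition that $\partial_\nu\varphi_\xi\in L^2(\pOm)$ and $\mathcal{N}_s\varphi_\xi\in L^2(\Omb)$, with both norms controlled by $\|\xi\|_{L^2(\Omega)}$, then the right-hand side of \eqref{e34} defines a bounded linear functional $\Lambda:L^2(\Omega)\to\R$, the Riesz representation theorem supplies a unique $w_0\in L^2(\Omega)$ representing $\Lambda$, and setting $w:=w_0$ in $\Omega$ and $w:=g_2$ in $\Omc$ produces the candidate $w\in L^2(\R^N)$.

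\textbf{Regularity of $\varphi_\xi$: the main obstacle.} The whole argument, and the restriction $s\le 3/4$, hinges on showing $\varphi_\xi\in H^2(\Omega)$ with $\|\varphi_\xi\|_{H^2(\Omega)}\le C\|\xi\|_{L^2(\Omega)}$. Granted this, \eqref{ND} yields $\partial_\nu\varphi_\xi\in L^2(\pOm)$, the identity $(-\Delta)^s\varphi_\xi=\xi+\Delta\varphi_\xi$ places $(-\Delta)^s\varphi_\xi\in L^2(\Omega)$, and Remark~\ref{rem-21} then delivers $\mathcal{N}_s\varphi_\xi\in L^2(\Omb)$, each with the required bound. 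I would reach this $H^2$-regularity by bootstrapping the two parts of $\mathscr{L}\varphi_\xi=\xi$ against each other on the smooth domain $\Omega$. When $0<s\le 1/2$, the inclusion $\varphi_\xi\in H_0^1(\Omega)\subset H^1(\R^N)$ already gives $(-\Delta)^s\varphi_\xi\in H^{1-2s}(\R^N)\subset L^2(\R^N)$ by Fourier analysis, so $-\Delta\varphi_\xi\in L^2(\Omega)$ and classical elliptic regularity on $\Omega$ concludes in one step. When $1/2<s\le 3/4$, one can only deduce at first $-\Delta\varphi_\xi\in H^{1-2s}(\Omega)$, whence $\varphi_\xi\in H^{3-2s}(\Omega)$ with vanishing Dirichlet trace. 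The arithmetic inequality $3-2s\ge 2s$, equivalent to $s\le 3/4$, is then exactly what is needed in order to feed this improved regularity back in and place $(-\Delta)^s\varphi_\xi$ again in $L^2(\Omega)$, closing the bootstrap. This threshold explains both the hypothesis of the theorem and why the range $3/4<s<1$ is left open.

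\textbf{Construction, uniqueness, and the equivalences.} With the regularity secured, the estimate
\[
|\Lambda(\xi)|\le C\bigl(\|f\|_{H^{-1}(\Omega)}+\|g_1\|_{L^2(\pOm)}+\|g_2\|_{L^2(\Omc)}\bigr)\|\xi\|_{L^2(\Omega)}
\]
follows at once, and the Riesz step above yields $w\in L^2(\R^N)$ satisfying \eqref{e34} for every $\varphi\in\mathbb{V}$, along with the bound \eqref{e35}. For uniqueness, the difference $\tilde w\in L^2(\R^N)$ of two very-weak solutions with the same data vanishes in $\Omc$ and satisfies $\int_\Omega\tilde w\,\mathscr{L}\varphi\,\dx=0$ for every $\varphi\in\mathbb{V}$; the surjectivity of $\mathscr{L}:\mathbb{V}\to L^2(\Omega)$, a consequence of Proposition~\ref{proposi-33-1} combined with the regularity step, then forces $\tilde w=0$ a.e. Finally, assertion~(1) is obtained by applying the integration by parts formulas \eqref{IBP-L-2} and \eqref{Int-Part} to the weak formulation \eqref{Weakforulinho} with test function $\varphi\in\mathbb{V}$, which generates precisely the boundary and exterior terms appearing in \eqref{e34}. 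Assertion~(2) follows from (1) combined with the uniqueness just proved: given a very-weak solution $w\in H^1(\R^N)$ with data $(g_1,g_2)$ as in Definition~\ref{def-sol}, Proposition~\ref{proposi-33} furnishes a weak solution which, by (1), is also very-weak with the same data, so uniqueness forces the two to coincide and $w$ is itself a weak solution.
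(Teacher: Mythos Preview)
Your proposal is correct and follows the same strategy as the paper: the identical $H^2$-bootstrap for $\varphi_\xi$ (the paper invokes Grubb's regularity results \cite{G-JFA} to justify the implication ``$\varphi\in H^{3-2s}(\Omega)\cap H_0^1(\Omega)\Rightarrow(-\Delta)^s\varphi\in L^2(\Omega)$'' that you argue heuristically via $3-2s\ge 2s$), followed by a transposition argument (the paper uses the Babu\v{s}ka--Lax--Milgram theorem where you invoke Riesz, which are equivalent here since $\mathscr{L}:\mathbb V\to L^2(\Omega)$ is an isomorphism). Your deduction of (b) from (a), Proposition~\ref{proposi-33}, and uniqueness is a nice shortcut compared with the paper's direct integration-by-parts verification.
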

\begin{proof}
The proof follows similarly as the case of the single fractional Laplace operator (see e.g. \cite{AKW}) with the exception of the restriction on $s$ which is an important and delicate step. We include the full proof for the sake of completeness. We proceed in several steps.\medskip

{\bf Step 1}:  Let $\mathbb A$ be the realization of $ \mathscr{L} $ in $L^2(\Omega)$ with zero Dirichlet exterior condition, that is,
\begin{equation}\label{Rea}
D(\mathbb A)=\mathbb V\coloneqq \{u\in H_0^1(\Omega):\; ( \mathscr{L}u)|_{\Omega}\in L^2(\Omega)\},\;\;
\mathbb Au=( \mathscr{L}u)|_{\Om}\;\mbox{ in }\,\Omega.
\end{equation}
We shall give a detailed description of this operator later.
 Notice that $\|v\|_{\mathbb V} := \|\mathscr{L}v\|_{L^2(\Om)}$ defines an equivalent norm on  $\mathbb V$.  This follows from the fact that the operator $\mathbb A$ is invertible, has a compact resolvent, and its first eigenvalue is strictly positive (see the proof of Theorem \ref{Thm1} below for more details).  We claim that
 \begin{align*}
 \mathbb V=\Big\{\varphi\in H_0^1(\Omega):\; \mathscr{L}\varphi\in L^2(\Omega),\;\; \partial_\nu \varphi\in L^2(\partial\Omega)\; \mbox{and }\;\mathcal  N_s\varphi\in L^2(\Omb)\Big\}.
 \end{align*}
 It suffices to show that $\partial_\nu \varphi\in L^2(\partial\Omega)$  and $\mathcal N_s\varphi\in L^2(\Omb)$ for every $\varphi\in\mathbb V$. Indeed, let $\varphi\in \mathbb V$. We have two cases.
\begin{itemize}
 \item Case $0<s<1/2$. Since $\varphi\in H_0^1(\Omega)\hookrightarrow H_0^s(\Om)$, it follows from the regularity result contained in \cite[Theorem 4.1]{G-JFA} that $(-\Delta)^s\varphi\in L^2(\Omega)$. Thus, $\mathcal N_s\varphi\in L^2(\Omb)$ by Remark \ref{rem-21}.  As $\mathscr{L}u\in L^2(\Omega)$, this also implies that $\Delta\varphi\in  L^2(\Omega)$. Since $\Omega$ is assumed to be smooth, using the well-known elliptic regularity results for the Laplace operator, (see e.g. \cite{GiTr}), we have that $\varphi\in  H^{2}(\Omega)$. Thus, $\partial_\nu\varphi\in  H^{1/2}(\partial\Omega)\hookrightarrow L^2(\partial\Omega)$.\medskip

\item Case $1/2\le s<1$.  Since $\varphi\in H_0^1(\Omega)$, it follows from  \cite[Theorem 4.1]{G-JFA} again that $(-\Delta)^s\varphi\in H^{1-2s}(\Omega)$. Hence,  $\Delta\varphi\in  H^{1-2s}(\Omega)$ and this implies that $\varphi\in  H^{3-2s}(\Omega)$. Thus, $\partial_\nu\varphi\in  H^{3/2-2s}(\partial\Omega)\hookrightarrow L^2(\partial\Omega)$ if $3/2-2s\ge 0$. Since $\varphi\in  H^{3-2s}(\Omega)$, it follows from \cite{G-JFA} again that if $3-4s\ge 0$, then $(-\Delta)^s\varphi\in L^2(\Om)$.  Since $\mathscr{L}\varphi\in L^2(\Omega)$, we can deduce that $\Delta\varphi\in L^2(\Omega)$. From elliptic regularity for the Laplace operator operator we can conclude that $\varphi\in H^2(\Omega)$. Thus, $\partial_\nu \varphi\in H^{1/2}(\partial\Omega)\hookrightarrow L^2(\partial\Omega)$.
\end{itemize}
Since we have assumed that $0<s\le 3/4$, it follows that $3/2-2s\ge 0$. and $3-4s\ge 0$.  Thus, using Remark \ref{rem-21}, 
we can conclude that $\partial_\nu \varphi\in L^2(\partial\Omega)$  and $\mathcal N_s\varphi\in L^2(\Omb)$. The claim is proved.\medskip

{\bf Step 2}:  We apply the  Babu\v{s}ka-Lax-Milgram theorem.
 Let $\mathbb F:L^2(\Omega)\times \mathbb V\to\RR$ be the bilinear form defined  by
  \begin{align*}
  \mathbb F(u,v)\coloneqq \int_{\Om}  u \mathscr{L} v\;\dx.
  \end{align*}
 It is clear that $\mathbb F$ is  bounded
  on $L^2(\Om) \times \mathbb V$. That is, 
  \begin{align*}
 \left|\mathbb F(u,v)\right|\le \|u\|_{L^2(\Omega)}\| \mathscr{L}  v\|_{L^2(\Omega}=\|u\|_{L^2(\Omega)}\|v\|_{\mathbb V},\;\;\forall\; (u,v)\in L^2(\Omega)\times \mathbb V.
  \end{align*}

Next, we show the inf-sup condition.  
 Letting
\[
u \coloneqq \frac{ \mathscr{L} v}{\| \mathscr{L} v\|_{L^2(\Om)}} \in L^2(\Om),
\]
we obtain that
  \begin{align*}
   \sup_{u \in L^2(\Om) , \|u\|_{L^2(\Om)}=1} |(u, \mathscr{L}  v)_{L^2(\Om)}|
    \ge \frac{|(\mathscr{L} v, \mathscr{L} v)_{L^2(\Om)}|}{\| \mathscr{L} v\|_{L^2(\Om)}} =\| \mathscr{L} v\|_{L^2(\Om)} = \|v\|_{\mathbb V} .
  \end{align*}
Next, we choose $v \in \mathbb V$ as the unique weak solution of the Dirichlet problem
\begin{align*}
 \mathscr{L} v = \frac{h}{\|h\|_{L^2(\Om)}}\;\;\mbox{ in }\;\Omega\;\mbox{ for some }\;0\ne h \in L^2(\Om).
  \end{align*}
  Then we readily obtain that
  \[
   \sup_{v\in\mathbb V, \|v\|_{\mathbb V}=1} |(h, \mathscr{L} v)_{L^2(\Om)}|
    \ge \frac{|(h,h)_{L^2(\Om)}|}{\|h\|_{L^2(\Om)}} = \|h\|_{L^2(\Om)} >0\;\mbox{  for all }\;0\ne h\in L^2(\Omega).
  \]

Finally, we have to show that the right-hand-side in \eqref{e34}
defines a linear continuous functional on $\mathbb V$. Indeed, applying the H\"older inequality, the fact that  $\partial_\nu$ maps $\mathbb V$ continuously into $L^2(\pOm)$, and $\mathcal N_s$ maps $\mathbb V$ continuously into $L^2(\Omb)$ (by Step 1), we obtain that there is a constant $C>0$ such that
  \begin{align}\label{NOR-EST}
 &  \left|\langle f,v\rangle_{H^{-1}(\Om),H_0^1(\Om)} -\int_{\pOm}g_1\partial_\nu v\;\mathrm{d}\sigma-\int_{\RR^N\setminus\Om} g_2 \mathcal{N}_s v\;\dx \right|\notag\\
   \le&  \|f\|_{H^{-1}(\overline\Om)} \|v\|_{H^{1}_0(\Om)}+\|g_1\|_{L^2(\pOm)}\|\partial_\nu v\|_{L^2(\pOm)}+ \|g_2\|_{L^2(\Omb)} \|\mathcal{N}_s v\|_{L^2(\Omb)}\notag \\
   \le &C\left(  \|f\|_{H^{-1}(\overline\Om)} + \|g_1\|_{L^2(\pOm)}+\|g_2\|_{L^2(\Omb)}\right)\|v\|_{\mathbb V}.
  \end{align}
  In view of \eqref{NOR-EST}, we can deduce that the right-hand-side in \eqref{e34}
  defines a linear continuous functional on the Hilbert space $\mathbb V$.
Therefore, all the requirements of the Babu\v{s}ka--Lax-Milgram
theorem hold. Thus, we can conclude that there exists a unique $w \in L^2(\Om)$ satisfying \eqref{e34}. Letting $w\coloneqq g_2$ in $\Omb$, we have that $w\in L^2(\RR^N)$ and satisfies \eqref{e34}. We have shown the existence and uniqueness of a very-weak solution to the system \eqref{EDP}. \medskip

{\bf Step 3}: Next, we show the estimate \eqref{e35}. Let $w\in L^2(\RR^N)$ be a very-weak solution of \eqref{EDP}. Let $v\in \mathbb V$ and set $w\coloneqq \mathbb Av= \mathscr{L}  v$ in $\Omega$. Taking $v$ as a test function in \eqref{e34}, using \eqref{NOR-EST}, \eqref{NND}, \eqref{H1-to-Hs}, and \eqref{ND},
we can deduce that there is a constant $C>0$ such that
 \begin{align*}
 \|w\|_{L^2(\Omega)}^2
\le &C\left( \|f\|_{H^{-1}(\Om)}+\|g_1\|_{L^2(\pOm)}+\|g_2\|_{L^2(\Omb)}\right)\|v\|_{\mathbb V}\\
= &C\left( \|f\|_{H^{-1}(\Om)}+\|g_1\|_{L^2(\pOm)}+\|g_2\|_{L^2(\Omb)}\right)\|w\|_{L^2(\Om)}.
 \end{align*}
Thus,
\begin{align*}
\|w\|_{L^2(\Omega)}\le C\left( \|f\|_{H^{-1}(\Om)}+\|g_1\|_{L^2(\pOm)}+\|g_2\|_{L^2(\Omb)}\right).
\end{align*}
Since $w=g_2$ in $\Omb$, it follows from the preceding estimate that
\begin{align*}
\|w\|_{L^2(\RR^N)}\le C\left( \|f\|_{H^{-1}(\Om)}+\|g_1\|_{L^2(\pOm)}+\|g_2\|_{L^2(\Omb)}\right).
\end{align*}
We have shown the estimate \eqref{e35} and this completes the proof of the first part. \medskip

{\bf Step 4}: We prove the assertions (a) and (b) of the theorem. Assume that $g\coloneqq g_2\in H^1(\Omb)$, $g_1\in H^{1/2}(\pOm)$, and that $g_2|_{\pOm}=g_1$.\medskip

(a) Let $w\in H^{1}(\RR^N)\hookrightarrow L^2(\RR^N)$ be a weak solution of \eqref{EDP}. It follows from the definition that $w=g_2$ in $\Omb$, $w|_{\pOm}=g_2|_{\pOm}=g_1$ on $\pOm$,  and
\begin{align}\label{eEE1}
\int_{\Om}\nabla w\cdot\nabla v\;\dx+\mathcal F(w,v)=\langle f,v\rangle_{H^{-1}(\Om),H_0^1(\Om)},
\end{align}
for every $v\in \mathbb V$.
Since $v=0$ in $\Omb$, we have that
\begin{align}\label{eEE2}
\int_{\RR^N}\int_{\RR^N}&\frac{(w(x)-w(y))(v(x)-v(y))}{|x-y|^{N+2s}}\;\dx \;\dy\notag\\
&=\int\int_{\RR^{2N}\setminus(\RR^N\setminus\Omega)^2}\frac{(w(x)-w(y))(v(x)-v(y))}{|x-y|^{N+2s}}\;\dx\;\dy.
\end{align}

Let $v\in H^2(\Omega)\cap H_0^1(\Omega)$. Notice that both $\Delta v$ and $(-\Delta)^sv$ belong to $L^2(\Omega)$, $\partial_\nu v\in L^2(\partial\Omega)$, and $\mathcal N_sv\in L^2(\Omb)$. Therefore,
using \eqref{eEE1}, \eqref{eEE2}, the integration by parts formulas \eqref{Int-Part}-\eqref{IBP-L},  we get that for every $v\in H^2(\Omega)\cap H_0^1(\Omega)$,
\begin{align}\label{WW}
&\langle f,v\rangle_{H^{-1}(\Om),H_0^1(\Om)}\notag\\
=&\int_{\Omega}\nabla w\cdot\nabla v\;\dx+\frac{C_{N,s}}{2}\int_{\RR^N}\int_{\RR^N}\frac{(w(x)-w(y))(v(x)-v(y))}{|x-y|^{N+2s}}\;\dx\; \dy\notag\\
=&\int_{\Om}w(-\Delta)v+\int_{\pOm}w\partial_\nu v\;\mathrm{d}\sigma+\int_{\Omega}w(-\Delta)^sv\;\dx+\int_{\Omb}w\mathcal N_sv\;\dx\notag\\
=&\int_{\Om}w \mathscr{L} v+\int_{\pOm}g_1\partial_\nu v\;\mathrm{d}\sigma+\int_{\Omb}g_2\mathcal N_s v\;\dx.
\end{align}
Since $H^2(\Omega)\cap H_0^1(\Omega)$ is dense in $\mathbb V$, we have that \eqref{WW} remains true for every $v\in\mathbb V$. Thus, $w$ is a very-weak solution of \eqref{EDP}.\medskip

(b) Finally, let $w$ be a very-weak solution of \eqref{EDP} and assume that $w\in H^{1}(\RR^N)$. Since $w=g_2$ in $\Omb$, we have that $g_2\in H^{1}(\Omb)$. Let $\widetilde g\in H^{1}(\RR^N)$ be such that $\widetilde g|_{\Omb}=g_2$. Then clearly $(w-\widetilde g)\in H_0^{1}(\Om)$. Since $w$ is a very-weak solution of \eqref{EDP}, it follows from the definition that for every $v\in \mathbb V$,
\begin{align}\label{EEe3}
\int_{\Omega}w \mathscr{L} v\;\dx=\langle f,v\rangle_{H^{-1}(\Om),H_0^1(\Om)} -\int_{\pOm}g_1\partial_\nu v\;\mathrm{d}\sigma-\int_{\Omb}g_2\mathcal N_sv\;\dx.
\end{align}
In particular, \eqref{EEe3} holds for every $v\in H^2(\Omega)\cap H_0^1(\Omega)$.
Let then $v\in H^2(\Omega)\cap H_0^1(\Omega)$.
 Since $v\in H^{1}(\RR^N)$ and $v=0$ in $\Omb$, it follows from \eqref{Int-Part} and \eqref{eEE2} that
\begin{align}\label{EEe4}
&\frac{C_{N,s}}{2} \int_{\RR^N}\int_{\RR^N}\frac{(w(x)-w(y))(v(x)-v(y))}{|x-y|^{N+2s}}\;\dx\; \dy\\
&=\int_{\Omega}w(-\Delta)^sv\;\dx+\int_{\Omb}w\mathcal N_sv\;\dx
=\int_{\Omega}w(-\Delta)^sv\;\dx+\int_{\Omb}g_2\mathcal N_sv\;\dx.\notag
\end{align}
It also follows from \eqref{IBP-L} that
\begin{align}\label{EEe5}
\int_{\Omega}\nabla w\cdot\nabla v\;\dx=&-\int_{\Omega}w\Delta v\;\dx+\int_{\pOm}w\partial_\nu v\;\mathrm{d}\sigma\notag\\
=&-\int_{\Omega}w\Delta v\;\dx+\int_{\pOm}g_1\partial_\nu v\;\mathrm{d}\sigma.
\end{align}
Combining \eqref{EEe3}-\eqref{EEe4} and \eqref{EEe5}, we get that for every $v\in H^2(\Omega)\cap H_0^1(\Omega)$,
\begin{align}\label{eEE5}
\int_{\Omega}\nabla w\cdot\nabla v\;\dx+\mathcal F(w,v)=\langle f,v\rangle_{H^{-1}(\Om),H_0^1(\Om)}.
\end{align}
Since $H^2(\Omega)\cap H_0^1(\Omega)$ is dense in $H_0^{1}(\Om)$, we have that \eqref{eEE5} remains true for every $v\in H_0^{1}(\Om)$. We have shown that $w$ is a weak solution of \eqref{EDP} and the proof is finished.
\end{proof}

We conclude this section with the following remark.

\begin{remark}\label{rem-37}
We observe the following facts.
\begin{enumerate}
\item We notice that in Definition \ref{EVWS} of very-weak solutions, we do not require that the function $w$ has a well-defined trace on $\partial\Omega$ and that $w|_{\partial\Omega}=g_1$, for that reason the regularity of $w$ cannot be improved.

\item But if $w$ has a well-defined trace on $\partial\Omega$ and $w|_{\partial\Omega}=g_1\in L^2(\pOm)$, then the regularity of $w$ can be improved. Indeed, using well-known trace theorems (see e.g. \cite{GM-2011}) we can deduce that $w\in L^2(\R^N)\cap H^{1/2}(\Omega)$.

\item Let $\mathbb V$ be the space defined in \eqref{Rea}. If $0<s\le 3/4$, then $\mathbb V\subset H^2(\Omega)\cap H_0^1(\Omega)$.  Indeed,  let $\varphi\in\mathbb V$. If $0<s<1/2$, it follows from the proof of Theorem \ref{th-36} Step 1 that $\varphi\in H^2(\Omega)\cap H_0^1(\Omega)$. If $1/2\le s\le 3/4$, then the proof of Theorem \ref{th-36} Step 1 shows again that $\varphi\in H^{3-2s}\cap H_0^1(\Omega)$. Using \cite{G-JFA}, we get that, in fact $(-\Delta)^s\varphi\in L^2(\Omega)$. This  implies that $\Delta\varphi\in L^2(\Omega)$. Thus, $\varphi\in H^2(\Omega)\cap H_0^1(\Omega)$ by using elliptic regularity results for the Laplace operator.

\item Consider the following Dirichlet problem: Find $u\in H_0^1(\Omega)$ satisfying
\begin{equation*}
\mathscr{L}u =f\;\mbox{ in } \Omega.
\end{equation*}
Due to the presence of the fractional Laplace operator $(-\Delta)^s$, even if $f$ is smooth, classical bootstrap argument cannot be used to improve the regularity of the solution $u$.  This follows from the fact that even if $f$ is smooth enough,  if $1/2\le s<1$,  then a function $v\in H_0^s(\Omega)$ satisfying $(-\Delta)^sv=f$ in $\Omega$ only belongs to $\cap_{\varepsilon>0}H^{2s-\varepsilon}(\Omega)$ and does not belong to $H^{2s}(\Omega)$.
We refer to the papers \cite{BoNo,Ro-Sj} for more details on this topic.
This suggests that for functions in the space $\mathbb V$ given in \eqref{Rea}, the regularity discussed in Step 1 in the proof of Theorem \ref{th-36} cannot be improved. At least, we do not know how to improve the regularity of functions belonging to $\mathbb V$.

\item In the case $3/4<s<1$, if the function $g_1$ is smooth, says, $g_1\in H^{2s-3/2}(\pOm)$, then we may replace  \eqref{e34} in the definition of very-weak solutions by the expression:
\begin{align*}
\int_{\Omega}w \mathscr{L}\varphi\;\dx=& \langle f,\varphi\rangle_{H^{-1}(\Omega), H_0^1(\Omega)} -\langle g_1,\partial_\nu\varphi\rangle_{H^{2s-3/2}(\pOm),H^{3/2-2s}(\pOm)}\\
&-\int_{\Omb}g_2\mathcal N_s\varphi\;\dx
\end{align*}
holds, for every $\varphi\in\mathbb V$.
In that case,  Theorem \ref{th-36} will be valid for every $0<s<1$.  But recall that the main objective of the paper is to study the minimization problem \eqref{Eq:func_A1} and our control function $u_1$ does not enjoy such a regularity.
\end{enumerate}
\end{remark}

\subsection{The parabolic problem}\label{sec-32}

First, we consider the following auxiliary problem:
\begin{equation}\label{p1}
\begin{cases}
\phi_{t} + \mathscr{L}\phi =  f & \mbox{ in }\; Q,\\
\phi = 0 &\mbox{ in }\; \Sigma, \\
\phi(\cdot,0) = \phi_{0}, &\mbox{ in }\; \Omega.
\end{cases}
\end{equation}
We shall denote by $\langle\cdot,\cdot\rangle$ the duality pairing between $H^{-1}(\Om)$ and $H_0^1(\Om)$. Here is our notion of weak solutions to the system \eqref{p1}.

\begin{definition}\label{weaksolution}
Let $f\in L^2(Q)$ and $\phi_{0}\in L^2(\Omega)$. We shall say that a function
$\phi\in \mathbb U\coloneqq L^2((0,T);H_0^1(\Om))\cap H^1((0,T);H^{-1}(\Omega))$ is a weak solution to \eqref{p1}, if  $\phi(\cdot,0)=\phi_0$ a.e. in $\Omega$ and
 the  equality
\begin{align}\label{Eq-Def31}
\langle \phi_t,\zeta\rangle_{H^{-1}(\Om),H_0^1(\Om)} +\int_{\Om}\nabla \phi\cdot\nabla \zeta\, \dx
 + \displaystyle \mathcal{F}(\phi,\zeta)= \int_{\Om} f\zeta\;\dx
\end{align}
holds, for every $\zeta \in H_0^1(\Om)$ and almost every $t\in (0,T)$.
\end{definition}

\begin{remark}
It is worthwhile noticing that if $\phi\in \mathbb U$ is a weak solution of \eqref{p1} with $f\in L^2(Q)$, then $\phi\in W(0,T)$. Thus, by \eqref{contWTA} $\phi\in C([0,T];L^2(\Omega))$, so that $\phi(\cdot,0)=\phi_0$ a.e. in $\Omega$ makes sense.
\end{remark}

Throughout the remainder of the article, we shall let
\begin{align*}
\mathbb U\coloneqq L^2((0,T);H_0^1(\Om))\cap H^1((0,T);H^{-1}(\Omega)).
\end{align*}
Now, we are in position to state the well-posedness of \eqref{p1}.

\begin{theorem}\label{Thm1}
Let $f\in L^2(Q)$ and $\phi_{0} \in L^{2}(\Omega)$. Then, there exists a unique weak solution $\phi\in \mathbb U$ to \eqref{p1} in the sense of Definition~\ref{weaksolution}.
In addition, there is a constant $C>0$ such that
\begin{equation}\label{estimation1}
\|\phi\|_{\mathbb U}\leq C\left(\|\phi_{0}\|_{L^2(\Omega)}+ \|f\|_{L^2(Q)}\right).
\end{equation}
\end{theorem}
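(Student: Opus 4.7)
The plan is to recast \eqref{p1} as an abstract linear evolution equation on the Gelfand triple $H_0^1(\Omega)\hookrightarrow L^2(\Omega)\hookrightarrow H^{-1}(\Omega)$ and apply the standard J.-L.\ Lions existence theorem for variational parabolic problems driven by a continuous, symmetric and coercive bilinear form. To this end, I would first introduce
\[
a(\phi,\zeta):=\int_\Omega\nabla\phi\cdot\nabla\zeta\,\mathrm{d}x+\mathcal F(\phi,\zeta),\qquad \phi,\zeta\in H_0^1(\Omega),
\]
which is well defined because functions in $H_0^1(\Omega)$ vanish a.e.\ in $\Omc$ by \eqref{eq.defXOmega}, and the embedding \eqref{H1-to-Hs} ensures that $\mathcal F$ is finite on $H_0^1(\Omega)\times H_0^1(\Omega)$. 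By \eqref{H1-1}, $a(\phi,\phi)=\|\phi\|_{H_0^1(\Omega)}^2$ and $|a(\phi,\zeta)|\le\|\phi\|_{H_0^1(\Omega)}\|\zeta\|_{H_0^1(\Omega)}$, so $a$ is continuous, symmetric and coercive. Denoting by $A:H_0^1(\Omega)\to H^{-1}(\Omega)$ the induced isomorphism $\langle A\phi,\zeta\rangle:=a(\phi,\zeta)$, the variational identity \eqref{Eq-Def31} is equivalent to the abstract equation $\phi_t+A\phi=f$ in $H^{-1}(\Omega)$ for a.e.\ $t\in(0,T)$, together with $\phi(0)=\phi_0$.

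Existence of a solution $\phi\in\mathbb U$ then follows from the classical Faedo--Galerkin method. The compact embedding $H_0^1(\Omega)\hookrightarrow L^2(\Omega)$ implies that $A$ has a compact self-adjoint inverse, so $L^2(\Omega)$ admits an orthonormal Hilbert basis $(e_k)_{k\ge 1}$ of eigenfunctions of $A$. Projecting \eqref{Eq-Def31} onto $V_m:=\mathrm{span}\{e_1,\ldots,e_m\}$ produces a linear ODE system with a unique global solution $\phi_m$. Testing this projected equation against $\phi_m$ and using coercivity gives
\[
\tfrac12\tfrac{d}{dt}\|\phi_m\|_{L^2(\Omega)}^2+\|\phi_m\|_{H_0^1(\Omega)}^2=\int_\Omega f\,\phi_m\,\mathrm{d}x,
\]
and Young's inequality together with Gronwall's lemma provides a uniform bound for $\phi_m$ in $L^\infty((0,T);L^2(\Omega))\cap L^2((0,T);H_0^1(\Omega))$. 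Re-inserting into the equation yields an analogous bound for $(\phi_m)_t$ in $L^2((0,T);H^{-1}(\Omega))$ via $\|A\phi_m\|_{H^{-1}(\Omega)}\le\|\phi_m\|_{H_0^1(\Omega)}$. Extracting a weakly convergent subsequence furnishes a limit $\phi\in\mathbb U$ satisfying \eqref{Eq-Def31}; the initial condition is recovered thanks to the continuous embedding \eqref{contWTA}, and lower semicontinuity of the norm under weak convergence yields \eqref{estimation1}.

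Uniqueness follows from the same energy identity applied to the difference $w:=\phi^1-\phi^2$ of two solutions with identical data: $w$ satisfies $w_t+Aw=0$ with $w(0)=0$, and testing with $w$ gives $\tfrac{d}{dt}\|w\|_{L^2(\Omega)}^2+2\|w\|_{H_0^1(\Omega)}^2=0$, whence $w\equiv 0$. I do not expect any genuine obstacle here, since the mixed local--nonlocal operator $\mathscr L$ enters only through the coercive bilinear form $a$ and no fine boundary or regularity theory is needed (in contrast with the very-weak elliptic theory of Theorem \ref{th-36}, which required the restriction $0<s\le 3/4$). The only point worth an explicit line of verification is the equivalence of the variational formulation \eqref{Eq-Def31} with the abstract equation $\phi_t+A\phi=f$ in $H^{-1}(\Omega)$, which is immediate from the continuity of $a$ and the density of test functions in $H_0^1(\Omega)$.
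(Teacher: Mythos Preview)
Your proof is correct and complete; both you and the paper work with the same bilinear form $a=\mathbb E$ on $H_0^1(\Omega)$, verify its continuity and coercivity, and derive the a~priori bound \eqref{estimation1} by testing the equation against the solution and applying Young's inequality. The genuine difference lies in how existence is obtained: the paper invokes semigroup theory, showing that $\mathbb E$ is a closed Dirichlet form whose associated operator $-\mathbb A$ generates a strongly continuous semigroup on $L^2(\Omega)$, and then writes the solution via the Duhamel formula $\phi(\cdot,t)=e^{-t\mathbb A}\phi_0+\int_0^t e^{-(t-\tau)\mathbb A}f(\cdot,\tau)\,\mathrm d\tau$. You instead run the Faedo--Galerkin scheme on the eigenbasis of $A$ and pass to the limit via weak compactness. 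Your route is more self-contained and avoids the semigroup machinery; the paper's route, on the other hand, yields an explicit representation of the solution and sets up the operator-theoretic framework that is reused immediately afterwards to establish the submarkovian and ultracontractive properties of $(e^{-t\mathbb A})_{t\ge 0}$.
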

\begin{proof}
We prove the result in several steps. We shall use semigroups theory.\medskip

{\bf Step 1:}
Consider the bilinear form $\mathbb E:H_0^1(\Omega)\times H_0^1(\Omega)\to \mathbb R$ on $L^2(\Omega)$ given by
\begin{align}
\mathbb E(u,v)\coloneqq \int_{\Omega}\nabla u\cdot\nabla v\;\dx+\mathcal F(u,v).
\end{align}
Using the embedding \eqref{H1-to-Hs},  it is easy to see that the form $\mathbb E$ is continuous in the sense that there is a constant $C>0$ such that for every $u,v\in H_0^1(\Omega)$ we have,
\begin{align*}
|\mathbb E(u,v)|\le C\|u\|_{ H_0^1(\Omega)}\|v\|_{ H_0^1(\Omega)}.
\end{align*}
We claim that the form $\mathbb E$ is closed. Indeed, let $(u_n)_{n\in\N}$ be a sequence in $ H_0^1(\Omega)$ such that
\begin{align}\label{form-C}
\lim_{n, m\to\infty}\Big(\mathbb E(u_n-u_m,u_n-u_m)+\|u_n-u_m\|_{L^2(\Omega)}^2\Big)=0.
\end{align}
It follows from \eqref{form-C} that $(u_n)_{n\in \N}$ is a Cauchy sequence in the Hilbert space $ H_0^1(\Omega)$.  Therefore, there is an $u\in H_0^1(\Omega)$ such that $u_n\to u$ in $H_0^1(\Omega)$ as $n\to\infty$.  Hence, $u_n\to u$ in $H_0^s(\Omega)$ as $n\to\infty$, by using the continuous embedding \eqref{H1-to-Hs}. This implies that
\begin{align*}
\lim_{n\to\infty}\mathbb E(u_n-u,u_n-u)=0.
\end{align*}
Thus, the form $\mathbb E$ is closed and we have proved the claim.\medskip

It is easy to see that the form $\mathbb E$ is coercive in the sense that there is a constant $C>0$ such that
\begin{align*}
\mathbb E(u,u)\ge C\|u\|_{H_0^1(\Omega)}^2,\;\;\forall u\in H_0^1(\Omega).
\end{align*}

{\bf Step 2:} Let $\mathbb A$ be the selfadjoint operator on $L^2(\Omega)$ associated with $\mathbb E$ in the sense that
\begin{equation}\label{op-A}
\begin{cases}
D(\mathbb A)\coloneqq \Big\{u\in H_0^1(\Omega):\;\exists f\in L^2(\Omega),\;\mathbb E(u,v)=(f,v)_{L^2(\Omega)}\;\forall v\in H_0^1(\Omega)\Big\},\\
\mathbb Au=f.
\end{cases}
\end{equation}
Using an integration by parts argument and the results obtained in \cite{BDVV2020,Cl-Wa}, we can show that
\begin{equation}\label{op-A-2}
D(\mathbb A)\coloneqq \Big\{u\in H_0^1(\Omega):\; ( \mathscr{L}u)|_{\Omega}\in L^2(\Omega)\Big\},\;\;
\mathbb Au=( \mathscr{L}u)|_{\Omega}\mbox{ in }\,\Omega.
\end{equation}
We  have shown that the system \eqref{p1} can be rewritten as the following abstract Cauchy problem
\begin{equation}\label{ACP}
\begin{cases}
\phi_t+\mathbb A\phi=f \;&\mbox{ in }\; Q,\\
\phi(\cdot,0)=\phi_0&\mbox{ in }\;\Omega.
\end{cases}
\end{equation}

Since the form $\mathbb E$ is non-negative, continuous, closed and $H_0^1(\Omega)$ is dense in $L^2(\Omega)$, it follows that the operator $-\mathbb A$ generates a strongly continuous semigroup $(e^{-t\mathbb A})_{t\ge 0}$ on $L^2(\Omega)$. This implies that for every $f\in L^2(Q)$ the Cauchy problem \eqref{ACP}, hence \eqref{p1}, has a unique strong solution $\phi\in \mathbb U$ given for a.e. $x\in\Om$ and a.e. $t\in (0,T)$ by
\begin{align*}
\phi(x,t)= e^{-t\mathbb A}\phi_0(x)+\int_0^te^{-(t-\tau)\mathbb A}f(x,\tau)\;\mathrm{d}\tau.
\end{align*}

{\bf Step 3:} It remains to prove  \eqref{estimation1}.  First, taking $\zeta=\phi$ as a test function in \eqref{Eq-Def31}, integrating over $(0,T)$, and using Young's inequality, we get that for every $\varepsilon>0$,
\begin{align*}
& \frac{1}{2}\|\phi(\cdot,T)\|^2_{L^2(\Omega)}+ \int_0^T\dis \int_{\Omega}|\nabla \phi|^2 \dx\; \dt +\int_0^T\mathcal{F}(\phi,\phi) \dt\\
=&\frac{1}{2}\|\phi_{0}\|^2_{L^2(\Omega)}+\int_Qf\phi\, \dx\; \dt  \\
\leq & \frac{1}{2}\|\phi_{0}\|^2_{L^2(\Omega)}+ \frac{1}{2\varepsilon}\|f\|^2_{L^2(Q)}+ \frac{\varepsilon}{2}\|\phi\|^2_{L^2(Q)}\\
\leq & \frac{1}{2}\|\phi_{0}\|^2_{L^2(\Omega)}+ \frac{1}{2\varepsilon}\|f\|^2_{L^2(Q)}+ \frac{\varepsilon}{2}\|\phi\|^2_{L^2((0,T);H_0^1(\Omega))}.
 \end{align*}
Choosing $\varepsilon>0$ small enough, we can deduce that there is a constant $C>0$ such that
\begin{align}\label{INEE}
\|\phi(T)\|^2_{L^2(\Omega)}+ \|\phi\|^2_{L^2((0,T);H_0^1(\Omega))}
\leq  C\left(\|\phi_{0}\|^2_{L^2(\Omega)}+ \|f\|^2_{L^2(Q)}\right).
\end{align}
Second, it follows from \eqref{Eq-Def31} and \eqref{INEE} that there is a constant $C>0$ such that
\begin{align}\label{INEE-2}
\left| \int_0^T\langle \phi_t,\zeta\rangle_{H^{-1}(\Om),H_0^1(\Om)}\;dt \right|\le &C\left(\|\phi\|_{L^2((0,T);H_0^1(\Omega)}+\|f\|_{L^2(Q)}\right)\|\zeta\|_{L^2((0,T);H_0^1(\Omega))}\notag\\
\le &C\left(\|\phi_0\|_{L^2((\Omega)}+\|f\|_{L^2(Q)}\right)\|\zeta\|_{L^2((0,T);H_0^1(\Omega))}.
\end{align}
Dividing both sides of \eqref{INEE-2} by $\|\zeta\|_{L^2((0,T);H_0^1(\Omega)}$ and taking the supremum over all functions $\zeta\in L^2((0,T);H_0^1(\Omega))$, we get that
\begin{align}\label{INEE-3}
\|\phi_t\|_{L^2((0,T);H^{-1}(\Om))}\le C\left(\|\phi_0\|_{L^2((\Omega)}+\|f\|_{L^2(Q)}\right).
\end{align}
Combining \eqref{INEE}-\eqref{INEE-3} we get the estimate \eqref{estimation1} and the proof is finished.
\end{proof}

\begin{remark}
We mention that if in \eqref{p1} $\phi_0\in D(\mathbb A)$, then the strong solution $\phi$ becomes a classical solution, and hence, enjoys the following additional regularity: $\phi\in C([0,T];D(\mathbb A))\cap H^1((0,T);L^2(\Omega))$.  We refer to \cite{ABHN,BWZ3-1} and the references therein for more details on semigroups theory and abstract Cauchy problems.
\end{remark}

Next, we give further qualitative properties of the operator $\mathbb A$ and the semigroup $(e^{-t\mathbb A})_{t\ge 0}$ constructed above. Even if all these results will not be used in the present paper, they are interesting on their own and deserve to be known by the mathematics community working in the field.

\begin{proposition}
The operator $\mathbb A$ has a compact resolvent. Its eigenvalues form a non-decreasing sequence of real numbers $(\lambda_n)_{n\in\mathbb N}$ satisfying
\begin{align}\label{eigen}
0<\lambda_1\le \lambda_2\le\cdots\le\lambda_n\le\cdots\;\mbox{ and }\; \lim_{n\to\infty}\lambda_n=\infty.
\end{align}
The semigroup $(e^{-t\mathbb A})_{t\ge 0}$  is submarkovian and ultracontractive.
\end{proposition}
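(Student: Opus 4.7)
My plan is to exploit the fact that $\mathbb A$ is the non-negative selfadjoint operator associated with the symmetric, closed, coercive, and continuous form $\mathbb E$ on $L^2(\Omega)$ already constructed in the proof of Theorem~\ref{Thm1}. The statement splits into three essentially independent blocks, each of which reduces to a standard result from form theory.

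For the compact resolvent and the discrete spectrum, I would first observe that $D(\mathbb A)\hookrightarrow H_0^1(\Omega)$ continuously: for $u\in D(\mathbb A)$, the definition of the norm \eqref{H1-1} together with the Cauchy--Schwarz inequality yields $\|u\|_{H_0^1(\Omega)}^2=\mathbb E(u,u)=(\mathbb Au,u)_{L^2(\Omega)}\le\|\mathbb Au\|_{L^2(\Omega)}\|u\|_{L^2(\Omega)}$. Combining this with the classical Rellich--Kondrachov compact embedding $H_0^1(\Omega)\hookrightarrow L^2(\Omega)$ (valid because $\Omega$ is bounded) gives the compactness of $(I+\mathbb A)^{-1}$ on $L^2(\Omega)$. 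Since $\mathbb A$ is selfadjoint with compact resolvent, the spectral theorem provides the desired non-decreasing sequence $(\lambda_n)$ of real eigenvalues with $\lambda_n\to\infty$; strict positivity $\lambda_1>0$ follows from the min--max formula together with the Poincar\'e inequality \eqref{eq.PoincareX}, which enforces $\mathbb E(u,u)\ge c\|u\|_{L^2(\Omega)}^2$ on $H_0^1(\Omega)$.

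To obtain the submarkovian property, I would verify the two Beurling--Deny criteria for the decomposition $\mathbb E=\mathbb E_{\rm loc}+\mathcal F$, where $\mathbb E_{\rm loc}(u,v):=\int_\Omega\nabla u\cdot\nabla v\,\dx$. Each summand satisfies the criteria separately: if $u\in H_0^1(\Omega)$ then $|u|\in H_0^1(\Omega)$ with $|\nabla|u||\le|\nabla u|$ a.e. and $\,||u|(x)-|u|(y)|\le|u(x)-u(y)|$ pointwise, while the normal contraction $v\mapsto v\wedge 1$ produces the analogous inequalities. Summing the two non-negative forms preserves these inequalities, so $\mathbb E(|u|,|u|)\le\mathbb E(u,u)$ and $\mathbb E(u\wedge 1,u\wedge 1)\le\mathbb E(u,u)$, whence $(e^{-t\mathbb A})_{t\ge 0}$ is simultaneously positivity-preserving and $L^\infty$-contractive, i.e., submarkovian.

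The key to ultracontractivity is the pointwise lower bound $\mathbb E(u,u)\ge\int_\Omega|\nabla u|^2\,\dx$. Combining it with the classical Nash inequality on $H_0^1(\Omega)$,
\[
\|u\|_{L^2(\Omega)}^{2+4/N}\le C\,\|\nabla u\|_{L^2(\Omega)}^2\,\|u\|_{L^1(\Omega)}^{4/N},
\]
yields a Nash-type inequality for the form $\mathbb E$ itself. Nash's classical differential-inequality argument applied to $t\mapsto\|e^{-t\mathbb A}u\|_{L^2(\Omega)}^2$, together with the submarkovian property established above, then produces an ultracontractive bound of the form $\|e^{-t\mathbb A}\|_{\mathcal L(L^1(\Omega),L^\infty(\Omega))}\le Ct^{-N/2}$ for $0<t\le 1$, which extends to all $t>0$ by semigroup composition with the exponential $L^2$-decay $e^{-\lambda_1 t}$. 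The only mildly delicate point is verifying that the nonlocal contribution $\mathcal F$ does not disturb the Nash inequality inherited from the local component; since $\mathcal F\ge 0$, it merely enlarges the right-hand side, so the inequality is kept intact. Beyond this bookkeeping, each step is standard form-and-semigroup theory and I do not anticipate any genuine obstacle.
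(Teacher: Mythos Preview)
Your proposal is correct and follows essentially the same route as the paper: compact embedding $H_0^1(\Omega)\hookrightarrow L^2(\Omega)$ for the compact resolvent and positivity of $\lambda_1$ via coercivity, the two Beurling--Deny criteria checked separately on the local and nonlocal summands for the submarkovian property, and a functional inequality inherited from the local Dirichlet form for ultracontractivity. The only cosmetic differences are that the paper verifies positivity preservation in the equivalent form $\mathbb E(u^+,u^-)\le 0$ rather than $\mathbb E(|u|,|u|)\le\mathbb E(u,u)$, and that it invokes the Sobolev embedding $H_0^1(\Omega)\hookrightarrow L^r(\Omega)$ together with the abstract equivalence in \cite{Dav,Ouh} instead of running Nash's differential-inequality argument directly; both choices lead to the same conclusion.
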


\begin{proof}
We prove the results in several steps.\medskip

{\bf Step 1}: Since the embedding $H_0^1(\Omega)\hookrightarrow L^2(\Omega)$ is compact, we have that the operator $\mathbb A$ has a compact resolvent. Therefore, its spectrum is composed with eigenvalues satisfying $0\le \lambda_1\le \lambda_2\le\cdots\le\lambda_n\le\cdots\;\mbox{ and }\; \lim_{n\to\infty}\lambda_n=\infty$.
Since the form $\mathbb E$ is coercive, it follows that the first eigenvalue $\lambda_1$  is strictly positive. Thus, $(\lambda_n)_{n\in\mathbb N}$ satisfies \eqref{eigen}.\medskip

{\bf Step 2:} We claim that the semigroup $(e^{-t\mathbb A})_{t\ge 0}$ is positivity-preserving in the sense
\begin{align}\label{PP}
0\le u\in L^2(\Omega)\;\mbox{ implies }\; e^{-t\mathbb A}u\ge 0,\;\forall\;t\ge 0.
\end{align}
The First Beurling-Deny criterion \cite[Theorem 1.3.1]{Dav} shows that \eqref{PP} is equivalent to 
\begin{align}\label{FBDC}
u\in H_0^1(\Om)\Rightarrow\; u^{+} \coloneqq \max\{u,0\}\in H_0^1(\Om)\;\mbox{ and }\;  \mathbb E(u^+,u^-)\le 0.
\end{align}

Indeed, let $u\in  H_0^1(\Omega)$ and set $u^{+} \coloneqq \max\{u,0\}$ and $u^{-} \coloneqq \max\{-u,0\}$. It follows from \cite[Chapter 1]{Tru} that $u^+$, $u^-\in  H_0^1(\Omega)$  and
\begin{align*}
\int_{\Omega}\nabla(u^+)\cdot\nabla(u^-)\;\dx=\int_{\Omega}(\nabla u)\chi_{\{u\ge 0\}} (\nabla u)\chi_{\{u\le 0\}}\;\dx=0.
\end{align*}
It has been shown in \cite{GW-CPDE}  that $\mathcal F(u^+,u^-)\le 0$. Thus,
\begin{align*}
\mathbb E(u^+,u^-)\le 0,\;\;\forall\;u\in H_0^1(\Omega).
\end{align*}
We have shown that  $(e^{-t\mathbb A})_{t\ge 0}$ is positivity-preserving.\medskip

{\bf Step 3:} We claim that $(e^{-t\mathbb A})_{t\ge 0}$ is $L^\infty$-contractive in the sense
\begin{align*}
\|e^{-t\mathbb A}u\|_{L^\infty(\Omega)}\le \|u\|_{L^\infty(\Omega)},\;\forall\;t\ge 0\;\mbox{ and } u\in L^2(\Omega)\cap L^\infty(\Omega)=L^\infty(\Omega).
\end{align*}
For this, let $u\in H_0^1(\Omega)$ be such that $u\ge 0$. It follows again from \cite[Chapter 1]{Tru} that $u\wedge 1\in  H_0^1(\Omega)$. A simple calculation gives 
\begin{align*}
\int_{\Omega}|\nabla (u\wedge 1)|^2\;\dx\le\int_{\Omega}|\nabla u|^2\;\dx.
\end{align*}
By \cite[Lemma 2.7]{War},  we have that $\mathcal F(u\wedge 1, u\wedge 1)\le\mathcal F(u,u)$. We have shown that
\begin{align*}
\mathbb E(u\wedge 1,u\wedge 1)\le \mathbb E(u,u),\;\;\forall\; 0\le u\in   H_0^1(\Omega).
\end{align*}
It follows from the second Beurling--Deny criterion \cite[Theorem 1.3.2]{Dav} that  $(e^{-t\mathbb A})_{t\ge 0}$ is $L^\infty$-contractive.
We have shown that the semigroup $(e^{-t\mathbb A})_{t\ge 0}$ is submarkovian. As a consequence,  $(e^{-t\mathbb A})_{t\ge 0}$ can be extended to consistent semigroups on $L^p(\Omega)$ ($1\le p\le\infty)$. Each semigroup is strongly continuous on $L^p(\Omega)$ if $1\le p<\infty$, and bounded analytic if $1<p<\infty$ (see e.g.  \cite[Chapter 1]{Dav} for more details).
\medskip

{\bf Step 4:} It remains to show that the semigroup is ultracontractive. Indeed, notice that we have the following continuous embedding:
\begin{align*}
 H_0^1(\Omega)\hookrightarrow L^{r}(\Omega)\;\mbox{ with } r=\begin{cases}  \frac{2N}{N-2}\;\;&\mbox{ if } N>2\\ 1\le r<\infty &\mbox{ if } N\le 2.\end{cases}
\end{align*}
That is, there is a constant $C>0$ such that for every $u\in H_0^1(\Omega)$,
\begin{align}\label{ult}
\|u\|_{ L^{r}(\Omega)}^2\le C\mathbb E(u,u).
\end{align}
Using the abstract results contained in \cite{Dav,Ouh}, the estimate \eqref{ult} is equivalent to the ultracontractivity of $(e^{-t\mathbb A})_{t\ge 0}$. More precisely, for every $1\le p\le q\le\infty$, the operator $e^{-t\mathbb A}$ maps $L^p(\Omega)$ into $L^q(\Omega)$ and there is a constant $C>0$ such that for every $t>0$  and $u\in L^p(\Omega)$,
\begin{align*}
\|e^{-t\mathbb A}u\|_{L^q(\Omega)}\le Ct^{-\frac N2\left(\frac 1p-\frac 1q\right)}\|u\|_{L^p(\Omega)}.
\end{align*}
The proof is finished.
\end{proof}

We mention that the results obtained for the  homogeneous problem \eqref{p1} are classical and they follow from semigroups theory associated with bilnear operators studied in \cite{Dav,Ouh} and the properties of the fractional Laplace operator investigated in \cite{BVDV,NPV,DRV,GW-CPDE,Ro-Sj,War} and their references.

Next, we consider the nonhomogeneous boundary-exterior-initial value problem \eqref{Eq:main_A1}, that is,
\begin{equation}\label{p1nh}
\begin{cases}
\phi_{t} + \mathscr{L}\phi =  0& \mbox{ in }\; Q,\\
\phi=u_1&\mbox{ on }\;\Gamma,\\
\phi = u_2 &\mbox{ in }\; \Sigma, \\
\phi(\cdot,0) = 0 &\mbox{ in }\; \Omega.
\end{cases}
\end{equation}

First, we consider smooth boundary-exterior data.

\begin{definition}\label{Dwsi}
Let $u_2\in H^1((0,T); H^1(\Omb)$ and  $u_1\in L^2((0,T);H^{1/2}(\pOm))$ be such that $u_2|_{\Gamma}=u_1$.  Let $\tilde u\in H^1((0,T);H^1(\RR^N)$ be such that $\tilde u=u_2$ in $\Sigma$.  A function $\phi\in L^2((0,T);H^1(\RR^N))\cap H^1((0,T);H^{-1}(\Omega))$ is said to be a weak solution of the system \eqref{p1nh} if $\phi-\tilde u\in L^2((0,T);H_0^1(\Omega))\cap H^1((0,T);H^{-1}(\Omega))$ and the identity
\begin{align}
\langle\phi_t,\zeta\rangle_{H^{-1}(\Omega),H_0^1(\Omega)}+\int_{\Omega}\nabla\phi\cdot\nabla\zeta\;\dx+\mathcal F(\phi,\zeta)=0
\end{align}
holds, for every $\zeta\in H_0^1(\Omega)$ and almost every $t\in (0,T)$.
\end{definition}

\begin{remark}
In Definition \ref{Dwsi}, we have assumed that the function $u_2$ (hence, the solution $\phi$) has a well-defined trace that coincides with $u_1\in L^2(\Gamma)$. In that case,  the condition on $\Gamma$ can be dropped and the system is still well-posed. That is,  \eqref{p1nh} becomes
\begin{equation*}
\begin{cases}
\phi_{t} + \mathscr{L}\phi =  0& \mbox{ in }\; Q,\\
\phi = u_2 &\mbox{ in }\; \Sigma, \\
\phi(\cdot,0) = 0 &\mbox{ in }\; \Omega.
\end{cases}
\end{equation*}
\end{remark}

We have the following existence result.

\begin{theorem}
Let  $u_2$ and $u_1$ be as in Definition \ref{Dwsi}. Then, the system \eqref{p1nh} has a unique weak solution $\phi\in L^2((0,T);H^1(\RR^N))\cap H^1((0,T);H^{-1}(\Omega))$ in the sense of Definition \ref{Dwsi}. In addition, there is a constant $C>0$ such that
\begin{align}\label{EST1}
\|\phi\|_{L^2((0,T);H^1(\RR^N))\cap H^1((0,T);H^{-1}(\Omega)}\le C\|u_2\|_{H^1((0,T); H^1(\Omb))}
\end{align}
\end{theorem}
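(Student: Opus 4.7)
The approach I would adopt is the classical lifting reduction: subtract the extension $\tilde u$ of the boundary--exterior data into the equation, thereby converting the problem to an interior Dirichlet-type problem with zero exterior condition, a non-zero $H^{-1}$-valued forcing, and an $L^2$ initial datum. The only new ingredient beyond Theorem \ref{Thm1} is the need to handle sources in $L^2((0,T);H^{-1}(\Om))$ rather than in $L^2(Q)$.

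I would start by dispatching uniqueness. Given two weak solutions $\phi_1,\phi_2$ in the sense of Definition \ref{Dwsi}, their difference $w\coloneqq\phi_1-\phi_2$ automatically lies in $\mathbb U$ (the definition is intrinsic in the choice of $\tilde u$: for any fixed extension both $\phi_i-\tilde u\in \mathbb U$, so their difference is too), and $w$ is a weak solution of the homogeneous problem \eqref{p1} with $f\equiv 0$ and $\phi_0\equiv 0$. Theorem \ref{Thm1} then forces $w\equiv 0$.

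For existence, I would fix any extension $\tilde u\in H^1((0,T);H^1(\RR^N))$ of $u_2$ and seek $\phi$ in the form $\phi=v+\tilde u$ with $v\in\mathbb U$. Inserting this ansatz into the weak formulation of Definition \ref{Dwsi}, $v$ has to solve, for a.e.\ $t$ and every $\zeta\in H_0^1(\Om)$,
\begin{equation*}
\langle v_t,\zeta\rangle+\int_\Om\nabla v\cdot\nabla\zeta\,\dx+\mathcal F(v,\zeta)=\langle F(t),\zeta\rangle,
\end{equation*}
with initial datum $v(\cdot,0)=-\tilde u(\cdot,0)|_\Om\in L^2(\Om)$ and right-hand side
\begin{equation*}
\langle F(t),\zeta\rangle\coloneqq-\int_\Om\tilde u_t\,\zeta\,\dx-\int_\Om\nabla\tilde u\cdot\nabla\zeta\,\dx-\mathcal F(\tilde u,\zeta).
\end{equation*}
The continuity of $\mathcal F$ on $H^s(\RR^N)\times H^s(\RR^N)$, together with the embedding \eqref{H1-to-Hs}, would yield $F\in L^2((0,T);H^{-1}(\Om))$ with $\|F\|_{L^2((0,T);H^{-1}(\Om))}\le C\|\tilde u\|_{H^1((0,T);H^1(\RR^N))}$. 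Since Theorem \ref{Thm1} is stated only for $L^2(Q)$ sources, I would appeal instead to the classical Lions existence theorem applied to the continuous and coercive form $\mathbb E$ on the Gelfand triple $H_0^1(\Om)\hookrightarrow L^2(\Om)\hookrightarrow H^{-1}(\Om)$; equivalently, $\mathbb A$ extends to an isomorphism $H_0^1(\Om)\to H^{-1}(\Om)$ and $v$ is given by Duhamel's formula using the analytic semigroup $(e^{-t\mathbb A})_{t\ge 0}$. This produces a unique $v\in\mathbb U$ with the expected energy bound. Setting $\phi\coloneqq v+\tilde u$, one verifies directly that $\phi$ fulfills Definition \ref{Dwsi}; the estimate \eqref{EST1} then follows by combining the bound on $v$ with the embedding $H^1((0,T);H^1(\RR^N))\hookrightarrow C([0,T];H^1(\RR^N))$ (which controls $\tilde u(\cdot,0)$ in $L^2(\Om)$) and a standard extension operator from $H^1(\Omb)$ to $H^1(\RR^N)$ (available because $\pOm$ is smooth), choosing $\tilde u$ so that $\|\tilde u\|_{H^1((0,T);H^1(\RR^N))}\le C\|u_2\|_{H^1((0,T);H^1(\Omb))}$.

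The main obstacle I foresee is precisely this mismatch with Theorem \ref{Thm1}: when $s\ge 1/2$, the distribution $\mathscr L\tilde u$ is \emph{not} in $L^2(\Om)$, since $(-\Delta)^s\tilde u$ only lies in $H^{1-2s}(\RR^N)$, so Theorem \ref{Thm1} cannot be applied verbatim and the forcing must be read as an element of $L^2((0,T);H^{-1}(\Om))$. Everything else is a routine adaptation of the classical theory of parabolic equations governed by coercive bilinear forms.
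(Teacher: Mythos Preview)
Your proposal is correct, and your uniqueness argument is identical to what the paper would give. The existence argument, however, takes a genuinely different route.

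The paper does not take an \emph{arbitrary} Sobolev extension of $u_2$. Instead, for each fixed $t$ it lets $\tilde u(\cdot,t)$ be the weak solution of the elliptic problem $\mathscr L\tilde u=0$ in $\Omega$, $\tilde u=u_2(\cdot,t)$ in $\Omb$ (Proposition~\ref{proposi-33} with $f=0$). With this $\mathscr L$-harmonic lifting, the reduced unknown $\Phi=\phi-\tilde u$ satisfies $\Phi_t+\mathscr L\Phi=-\tilde u_t$ with zero exterior data, and since $\tilde u_t\in L^2((0,T);H^1(\RR^N))\subset L^2(Q)$, the forcing is an honest $L^2(Q)$ function. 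Theorem~\ref{Thm1} then applies \emph{verbatim}, with no need to upgrade it to $H^{-1}$-valued sources.

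Your approach instead absorbs the full $\mathscr L\tilde u$ into the right-hand side, which, as you correctly diagnose, only lands in $L^2((0,T);H^{-1}(\Om))$ when $s\ge 1/2$. You compensate by invoking the classical Lions parabolic theory for coercive forms on a Gelfand triple, which is perfectly legitimate here since $\mathbb E$ is continuous and coercive on $H_0^1(\Om)$. What you gain is robustness: your argument does not rely on the elliptic solvability result (Proposition~\ref{proposi-33}) and works for any lifting. What the paper gains is that it stays entirely inside the toolkit already developed in the article, at the cost of the extra elliptic step. Both arguments yield the same estimate \eqref{EST1}, and both require controlling $\tilde u(\cdot,0)$ in $L^2(\Om)$ for the initial datum of the reduced problem, which you handle correctly.
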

\begin{proof}
We prove the result in several steps.\medskip

{\bf Step 1}: First, assume that $u_2$ does not depend on the time variable.
Let $\tilde u\in H^1(\RR^N)$ be the unique weak solution of the Dirichlet problem
\begin{equation}\label{HDP}
\begin{cases}
 \mathscr{L}\tilde u=0\;\;&\mbox{ in }\; \Omega,\\
\tilde u=u_2&\mbox{ in }\;\Omb.
\end{cases}
\end{equation}
That is, $\tilde u\in H^1(\RR^N)$, $\tilde u|_{\Omb}=u_2$,   and $\tilde u$ solves \eqref{HDP} in the sense that
\begin{align*}
\int_{\Omega}\nabla \tilde u\cdot \nabla v\;\dx+\mathcal F(\tilde u,v)=0\;\;\mbox{ for all }\;v\in H_0^1(\Omega),
\end{align*}
and there is a constant $C>0$ such that
\begin{align}\label{316}
\|\tilde u\|_{H^1(\RR^N)}\le  C\|u_2\|_{H^1(\Omb)}.
\end{align}
The proof of the existence, uniqueness of such a solution $\tilde u$ and the continuous dependence on the datum $u$ follow from Proposition \ref{proposi-33} by taking $f=0$ in \eqref{EDP}.\medskip

{\bf Step 2}: Second, assume that $u_2$ depends on both variables $(x,t)$ and satisfies the assumption of the theorem.  Let $\tilde u$ be the associated solution of \eqref{HDP}.
It follows from the above argument that $\tilde u\in H^1((0,T);H^1(\RR^N))$.
Let $\Phi\coloneqq \phi-\tilde u$. Then, it is clear that $\Phi|_{\Sigma}=0$. In addition, a simple calculation shows that
\begin{equation}\label{p1nh-2}
\begin{cases}
\Phi_{t} + \mathscr{L}\Phi =  -\tilde u_t& \mbox{ in }\; Q,\\
\Phi = 0 &\mbox{ in }\; \Sigma, \\
\Phi(\cdot,0) = 0 &\mbox{ in }\; \Omega.
\end{cases}
\end{equation}
Let
\begin{align*}
 \mathbb H \coloneqq L^2((0,T);H^1(\RR^N))\cap H^1((0,T);H^{-1}(\Omega)).
\end{align*}
Since  $\tilde{u}_t \in L^2((0,T);H^{1}(\RR^N))$,  using
Theorem~\ref{Thm1}, we get that there exists a unique $\Phi \in \mathbb{H}$ solving \eqref{p1nh-2}. Thus, the unique solution $\phi$ of \eqref{p1nh} is given by $\phi=\Phi+\tilde u$. \medskip

{\bf Step 3}: It remains to show \eqref{EST1}.
Firstly, since $\Phi=0$ in $\Sigma$ and $\Phi(\cdot,0)=0$ in $\Omega$, it follows from \eqref{estimation1} that there is a constant $C>0$ such that
  \begin{align}\label{A1}
  \|\Phi\|_{\mathbb H}\le C\|\tilde u_t\|_{L^2((0,T);H^{1}(\RR^N))}.
  \end{align}	
Secondly, it follows from \eqref{316} that there is a constant $C>0$ such that
\begin{align}\label{A2}
  \|\tilde{u}\|_{L^2((0,T);H^{1}(\RR^N))}  \le C\|u_2\|_{L^2((0,T);H^{1}(\Omb))}.
\end{align}
Thirdly, combining \eqref{A1}-\eqref{A2} and using \eqref{316}, we get that there is a constant $C>0$ such that
\begin{align*}  
\|\phi\|_{\mathbb H}&=\|\Phi+\tilde u\|_{\mathbb H}\le \|\Phi\|_{\mathbb H}+\|\tilde u\|_{\mathbb H}\\
&\le C\left(\|\tilde u_t\|_{L^2((0,T);H^{1}(\RR^N))}+\|u_2|_{L^2((0,T);H^{1}(\Omb))}\right)\notag\\
&\le C\left(\|u_t\|_{L^2((0,T);H^{1}(\Omb))}+\|u_2\|_{L^2((0,T);H^{1}(\Omb))}\right)\notag\\
&=C\|u_2\|_{H^1((0,T);H^{1}(\Omb))}.
\end{align*}
We have shown \eqref{EST1} and  the proof is finished.
\end{proof}

Finally, we consider singular boundary-exterior data.
It is worthwhile noticing that  since we are considering data $u_1\in L^2(\Gamma)$ and $u_2\in L^2(\Sigma)$, the system \eqref{p1nh} cannot have weak solutions in the sense of Definition \ref{Dwsi}. For this reason, we need to introduce the notion of very-weak solutions as in the elliptic case.

\begin{definition}\label{def:VWS}
Let $u_1\in L^2(\Gamma)$ and $u_2\in L^2(\Sigma)$.  A function $\phi\in L^2((0,T)\times\RR^N)$ is said to be a very weak-solution (or a solution by transposition) to the system \eqref{p1nh} if the identity
\begin{align}\label{VWS}
\int_{Q}\phi\Big(-\varphi_t+ \mathscr{L}\varphi\Big)\;\dx\;\dt
= -\int_{\Gamma} u_1\partial_\nu \varphi \;\mathrm{d}\sigma\, \dt -\int_\Sigma u_2\mathcal{N}_s\varphi \;\dx\, \dt,
\end{align}
holds, for every $\varphi\in L^2((0,T),\mathbb V)\cap H^1((0,T); L^2(\Omega))$ with $\varphi(\cdot,T)=0$ a.e. in $\Omega$, where we recall that $\mathbb V\coloneqq D(\mathbb A)$ is the space defined in \eqref{Rea}.
\end{definition}

As in the elliptic case,  Definition \ref{def:VWS} of very-weak solutions makes sense if every function $\varphi\in L^2((0,T),\mathbb V)\cap H^1((0,T); L^2(\Omega))$ satisfies $\partial_\nu\varphi \in L^2(\Gamma)$, and $\mathcal N_s\varphi\in L^2(\Sigma)$.

We have the following result.

\begin{theorem}\label{thm:VWS_Exist_1}
Let $0<s\le 3/4$, $u_1\in L^2(\Gamma)$, and $u_2\in L^2(\Sigma)$. Then, there exists a unique very-weak solution $\phi\in L^2((0,T)\times\RR^N)$ to \eqref{p1nh} according to Definition~\ref{def:VWS}, and there is a constant $C>0$ such that
\begin{align}\label{VWS_EST_1}
\|\phi\|_{L^2((0,T)\times\RR^N)} \le C \left(\|u_1\|_{L^2(\Gamma)}+\|u_2\|_{L^2(\Sigma)}\right).
\end{align}
Moreover, if $u_1$ and $u_2$ are as in Definition \ref{Dwsi}, then the following assertions hold.
\begin{enumerate}
\item Every weak solution of  \eqref{p1nh} is also a very-weak solution.
\item Every very-weak solution of  \eqref{p1nh} that belongs to $\mathbb H$ is also a weak solution.
\end{enumerate}
 \end{theorem}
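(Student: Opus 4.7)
The plan is to mimic the transposition argument used in Theorem~\ref{th-36}, but now on the space-time level, by solving a backward dual parabolic problem and invoking Riesz representation. Throughout, the restriction $0<s\le 3/4$ will enter only through the regularity of the dual state, exactly as in Step 1 of the proof of Theorem~\ref{th-36}.

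\textbf{Step 1 (dual test space and traces).} Introduce the space
\[
\mathbb W \coloneqq \Big\{\varphi\in L^2((0,T);\mathbb V)\cap H^1((0,T);L^2(\Omega)):\ \varphi(\cdot,T)=0\ \text{in }\Omega\Big\},
\]
equipped with the norm $\|\varphi\|_{\mathbb W}^2 \coloneqq \|\varphi\|_{L^2((0,T);\mathbb V)}^2 + \|\varphi_t\|_{L^2(Q)}^2$. Since $\varphi(t)\in\mathbb V$ for a.e.\ $t\in(0,T)$ and $0<s\le 3/4$, the argument in Step~1 of Theorem~\ref{th-36} gives, for a.e.\ $t$, both $\partial_\nu\varphi(t)\in L^2(\pOm)$ and $\mathcal N_s\varphi(t)\in L^2(\Omb)$, with the pointwise-in-time bounds $\|\partial_\nu\varphi(t)\|_{L^2(\pOm)}+\|\mathcal N_s\varphi(t)\|_{L^2(\Omb)}\le C\|\varphi(t)\|_{\mathbb V}$. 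Integrating in time, the linear maps $\varphi\mapsto\partial_\nu\varphi$ and $\varphi\mapsto \mathcal N_s\varphi$ are continuous from $\mathbb W$ into $L^2(\Gamma)$ and $L^2(\Sigma)$, respectively. Hence Definition~\ref{def:VWS} makes sense.

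\textbf{Step 2 (solvability of the backward dual problem).} Given $h\in L^2(Q)$, consider
\[
-\varphi_t+\mathscr{L}\varphi = h\ \text{in }Q,\qquad \varphi=0\ \text{in }\Sigma,\qquad \varphi(\cdot,T)=0\ \text{in }\Omega.
\]
Reversing time reduces this to \eqref{p1} with zero initial datum and right-hand side in $L^2(Q)$. Since $-\mathbb A$ generates a strongly continuous, submarkovian and analytic semigroup on $L^2(\Omega)$ (as established just after Theorem~\ref{Thm1}), standard maximal $L^2$-regularity for analytic semigroups yields a unique solution with $\varphi\in L^2((0,T);D(\mathbb A))\cap H^1((0,T);L^2(\Omega))=\mathbb W$ and
\[
\|\varphi\|_{\mathbb W}\le C\|h\|_{L^2(Q)}.
\]
Equivalently, the map $h\mapsto\varphi$ is an isomorphism from $L^2(Q)$ onto $\mathbb W$.

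\textbf{Step 3 (existence via Riesz representation).} Define the linear functional
\[
\Lambda: L^2(Q)\to\mathbb R,\qquad \Lambda(h)\coloneqq -\int_\Gamma u_1\partial_\nu\varphi\,\mathrm d\sigma\,\dt-\int_\Sigma u_2\mathcal N_s\varphi\;\dx\,\dt,
\]
where $\varphi$ is the dual solution from Step~2. Combining the continuity of the trace operators from Step~1 with the maximal regularity estimate, one obtains
\[
|\Lambda(h)|\le C\bigl(\|u_1\|_{L^2(\Gamma)}+\|u_2\|_{L^2(\Sigma)}\bigr)\|\varphi\|_{\mathbb W}\le C\bigl(\|u_1\|_{L^2(\Gamma)}+\|u_2\|_{L^2(\Sigma)}\bigr)\|h\|_{L^2(Q)}.
\]
By the Riesz representation theorem there is a unique $\phi\in L^2(Q)$ such that $\int_Q\phi h\,\dx\,\dt=\Lambda(h)$ for every $h\in L^2(Q)$. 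Rewriting $h=-\varphi_t+\mathscr{L}\varphi$ yields identity \eqref{VWS} for all $\varphi\in\mathbb W$. Extend $\phi$ to $(0,T)\times\R^N$ by setting $\phi\coloneqq u_2$ in $\Sigma$; the estimate \eqref{VWS_EST_1} follows from the bound on $\Lambda$ and $\|u_2\|_{L^2(\Sigma)}$. Uniqueness is immediate: if two very-weak solutions exist, their difference $\tilde\phi$ satisfies $\int_Q\tilde\phi h\,\dx\,\dt=0$ for every $h\in L^2(Q)$ and vanishes in $\Sigma$.

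\textbf{Step 4 (consistency, items (a) and (b)).} Assume now that $u_1$, $u_2$ are as in Definition~\ref{Dwsi}. For (a), let $\phi$ be a weak solution and let $\varphi\in\mathbb W$. Since $\varphi(t)\in\mathbb V\subset H^2(\Omega)\cap H_0^1(\Omega)$ for a.e.\ $t$ (Remark~\ref{rem-37}(c)), apply the integration by parts formulas \eqref{Int-Part} and \eqref{IBP-L} in the space variable and integrate in time, using $\phi(\cdot,0)=0$ and $\varphi(\cdot,T)=0$ to handle the time derivative; the resulting identity is exactly \eqref{VWS}. For (b), let $\phi$ be a very-weak solution lying in $\mathbb H$. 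The same integration by parts in both variables, applied to the relation \eqref{VWS} and tested against $\varphi\in H^2(\Omega)\cap H_0^1(\Omega)$ (dense in $\mathbb V$ and then in $H_0^1(\Omega)$), produces the weak formulation of Definition~\ref{Dwsi}; the boundary/exterior identifications $\phi|_\Sigma=u_2$ and $\phi|_\Gamma=u_1$ come respectively from the construction in Step~3 and from trace theorems applied to $\phi\in \mathbb H$.

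\textbf{Main difficulty.} The delicate point is \emph{maximal} $L^2$-parabolic regularity of the dual problem: we need $\mathscr{L}\varphi\in L^2(Q)$ together with $\varphi_t\in L^2(Q)$ (not only $L^2((0,T);H^{-1}(\Omega))$ as in Theorem~\ref{Thm1}), so that $\varphi(t)\in\mathbb V$ a.e.\ and the traces $\partial_\nu\varphi$, $\mathcal N_s\varphi$ are genuinely in $L^2$ on the lateral boundary-exterior set. This is the analogue of the obstruction $0<s\le 3/4$ encountered in the elliptic case, and is the reason the same restriction on $s$ reappears here.
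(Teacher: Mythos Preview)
Your proof is correct and follows essentially the same transposition strategy as the paper: solve the backward dual problem with data $h\in L^2(Q)$, use the restriction $0<s\le 3/4$ to obtain $\partial_\nu\varphi\in L^2(\Gamma)$ and $\mathcal N_s\varphi\in L^2(\Sigma)$ from $\varphi(t)\in\mathbb V$, and then construct $\phi$ by duality. The only cosmetic difference is that the paper packages the duality as the adjoint of an operator $\Lambda:L^2(Q)\to L^2(\Gamma)\times L^2(\Sigma)$, $\eta\mapsto(-\partial_\nu\varphi,-\mathcal N_s\varphi)$ and sets $\phi=\Lambda^\ast(u_1,u_2)$, whereas you phrase it as Riesz representation of a linear functional on $L^2(Q)$; these are the same argument. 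Your explicit invocation of maximal $L^2$-regularity for the analytic semigroup generated by $-\mathbb A$ is in fact a cleaner justification of $\varphi\in L^2((0,T);\mathbb V)\cap H^1((0,T);L^2(\Omega))$ than the paper's brief reference to ``semigroups theory as in the proof of Theorem~\ref{Thm1}'', which on its face only yields $\varphi\in\mathbb U$.
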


\begin{proof}
 We prove the result in several steps.\medskip

{\bf Step 1:} For a given $\eta \in L^2(Q)$, we consider the following dual problem associated to \eqref{p1nh}
\begin{equation}\label{Eq:Sdual}
\begin{cases}
-\varphi_t+ \mathscr{L}\varphi = \eta \quad &\mbox{ in } Q, \\
\varphi=0 &\mbox{ in }\;\Gamma,\\
\varphi = 0           \quad &\mbox{ in } \Sigma ,  \\
\varphi(T,\cdot)  = 0 \quad &\mbox{ in } \Om .
 \end{cases}
 \end{equation}
Using semigroups theory  as in the proof of Theorem~\ref{Thm1}, we can deduce that for the every $\eta\in L^2(Q)$ the system \eqref{Eq:Sdual} has a unique weak solution
$\varphi\in L^2((0,T),\mathbb V)\cap H^1((0,T); L^2(\Omega))$ given for a.e. $t\in (0,T)$ by
\begin{align*}
\varphi(\cdot,t)=\int_t^T e^{-(T-\tau)\mathbb A}\eta(\cdot,\tau)\;\mathrm{d}\tau\;\;\mbox{ in }\;\Omega.
\end{align*}

This implies that $\varphi_{t}\in L^2(Q)$. Since $\varphi\in L^2((0,T),\mathbb V)\cap H^1((0,T); L^2(\Omega))$, we have that  $\mathcal{N}_s \varphi \in L^2(\Sigma)$ and $\partial_\nu\varphi\in L^2(\Gamma)$ under the assumption $0<s\le 3/4$.  We define the mapping
 \begin{equation*}
\Lambda: L^2(Q) \rightarrow  L^2(\Gamma)\times L^2(\Sigma), \qquad \eta \mapsto \Lambda \eta \coloneqq (-\partial_\nu\varphi, -\mathcal{N}_s \varphi).
\end{equation*}
It is clear that $\Lambda$ is linear. In addition, using the continuous dependence on the data of solutions to \eqref{Eq:Sdual}, we get that there is a  constant $C>0$ such that
 \begin{align}\label{MJW}
\|\Lambda \eta \|_{L^2(\Sigma)\times L^2(\Gamma)} \le C\left( \|\mathcal{N}_s \varphi\|_{L^2((\Sigma)} +\|\partial_\nu\varphi\|_{L^2(\Gamma)}\right)
\leq C \|\varphi\|_{L^2((0,T);\mathbb V)}\le C\|\eta\|_{L^2(Q)}.
\end{align}
We have shown that $\Lambda$ is also continuous.

Next, let $\phi \coloneqq \Lambda^* u$ in $Q$ and $\phi:=u_2$ in $\Sigma$.  Calculating we get the following:
\begin{align}\label{EQL}
\int_Q \phi \eta \; \dx\, \dt
      =& \int_Q \phi \left(-\varphi_t+ \mathscr{L}\varphi\right) \;\dx\, \dt
      = \int_Q (\Lambda^* u) \eta \; \dx\, \dt\notag\\
      = &-\int_\Sigma u_2 \mathcal{N}_s \varphi \; \dx\, \dt -\int_{\Gamma}u_1\partial_\nu\varphi\;\mathrm{d}\sigma\;dt.
 \end{align}
We have constructed a function $\phi \in L^2((0,T);L^2(\RR^N))$ that solves \eqref{p1nh} in the very weak-sense.

Next, we show uniqueness. Assume that \eqref{p1nh} has two very-weak solutions $\phi_{1}$ and $\phi_{2}$ with  the same boundary datum $u_1$ and the same exterior datum $u_2$. It follows from the definition  that
\begin{align}\label{sol}
\int_{Q}\left(\phi_{1} - \phi_{2}\right)\left(-\varphi_t+ \mathscr{L}\varphi\right)\;\dx\; \dt = 0,
\end{align}
for every $\varphi \in L^2((0,T),\mathbb V)\cap H^1((0,T); L^2(\Omega))$ with $\varphi(T,\cdot) = 0$ a.e.~in $\Omega$. Since for every $\eta\in L^2(Q)$ the system \eqref{Eq:Sdual} has a unique weak solution $\varphi$, it follows from \eqref{sol} and \eqref{Eq:Sdual} that
\begin{align*}
\int_{Q}\left(\phi_{1} - \phi_{2}\right)\eta\;\dx\;\dt = 0,
\end{align*}
for every $\eta\in L^2(Q)$. It follows from the fundamental lemma of the calculus of variation that  $\phi_{1} = \phi_{2}$ a.e. in $Q$. Since $\phi_{1} = \phi_{2}=u_2$ a.e. in $\Sigma$, we can conclude that $\phi_1=\phi_2$ a.e. in $(0,T)\times\RR^N$, and we have shown uniqueness of very-weak solutions.\medskip

{\bf Step 2}: Next, we show the estimate \eqref{VWS_EST_1}. Using  \eqref{EQL} and  \eqref{MJW}, we get that there is a constant $C>0$ such that
\begin{align}\label{mjw}
\left|\int_Q \phi \eta \; \dx\; \dt\right|\le &C\left(\|u_2\|_{L^2(\Sigma)}\|\mathcal N_s\varphi\|_{L^2(\Sigma)}+\|u_1\|_{L^2(\Gamma)}\|\partial_\nu\varphi\|_{L^2(\Gamma)}\right)\notag\\
\le &C\left(\|u_2\|_{L^2(\Sigma)}+\|u_1\|_{L^2(\Gamma)}\right)
\left(\|\partial_\nu\varphi\|_{L^2(\Gamma)}+\|\mathcal N_s\varphi\|_{L^2(\Sigma)}\right)\notag\\
\le &C\left(\|u_2\|_{L^2(\Sigma)}+\|u_1\|_{L^2(\Gamma)}\right)\|\eta\|_{L^2(Q)}.
\end{align}
Dividing both sides of \eqref{mjw} by $\|\eta\|_{L^2(Q)}$ and taking the supremum over all $\eta\in L^2(Q)$, we get
\begin{align*}
\|\phi\|_{L^2(Q)}\le C\left(\|u_2\|_{L^2(\Sigma)}+\|u_1\|_{L^2(\Gamma)}\right).
\end{align*}
Since $\phi=u_2$ in $\Sigma$, it follows from the preceding estimate that
\begin{align*}
\|\phi\|_{L^2((0,T)\times \RR^N)}\le C\left(\|u_2\|_{L^2(\Sigma)}+\|u_1\|_{L^2(\Gamma)}\right)
\end{align*}
and we have shown the estimate \eqref{VWS_EST_1}.\medskip

{\bf Step 3:} Next, we prove the last two assertions of the theorem. For this, we assume that $u_1$ and $u_2$ are as in Definition \ref{Dwsi}.\medskip

(a) Let $\phi \in \mathbb{H} \coloneqq L^2(0,T);H^1(\RR^N))\cap H^1((0,T);H^{-1}(\Omega))$ be a weak solution to \eqref{p1nh}.  Notice that $\phi|_{\Sigma}\in  L^2(\Sigma)$ and $\phi|_{\Gamma}\in L^2(\Gamma)$. It follows from the definition that $\phi= u_2$ in $\Sigma$ and $\phi|_{\Gamma}=u_1$ on $\Gamma$. In particular, we have that
 \begin{align}\label{e1}
\langle  \phi_t(t,\cdot) , v \rangle_{H^{-1}(\Om),H_0^1(\Om)} + \int_{\Omega}\nabla \phi\cdot\nabla v\;\dx+\mathcal F(\phi,v)= 0,
 \end{align}
for every $v \in L^2((0,T);H^2(\Omega)\cap H_0^1(\Omega)) \cap H^1((0,T);L^2(\Om))$ with $v(\cdot,T)=0$ a.e. in $\Om$, and almost every $t \in (0,T)$. Since $v(\cdot,t)=0$ in $\Omb$, we have that
\begin{align}\label{e2}
\int_{\RR^N}\int_{\RR^N}&\frac{(\phi(t,x)-\phi(t,y))(v(t,x)-v(t,y))}{|x-y|^{N+2s}}\;\dx\; \dy\notag\\
&=\int\int_{\RR^{2N}\setminus(\RR^N\setminus\Omega)^2}\frac{(\phi(t,x)-\phi(t,y))(v(t,x)-v(t,y))}{|x-y|^{N+2s}}\;\;\dx\; \dy.
\end{align}
Using \eqref{e1}, \eqref{e2} and the integration by parts formulas \eqref{Int-Part}-\eqref{IBP-L} (notice that the test function $v$ is smooth enough), we get that
\begin{align*}
0=&\langle \phi_t(t,\cdot),v \rangle_{H^{-1}(\Om),H_0^1(\Om)} +\int_{\Omega}\nabla \phi\cdot\nabla v\;\dx  +\mathcal F(\phi,v)\\
=&\langle \phi_t(t,\cdot),v(t,\cdot) \rangle_{H^{-1}(\Om),H_0^1(\Om)} -\int_{\Omega}\phi\Delta v\;\dx+\int_{\pOm}\phi\partial_\nu v\;\mathrm{d}\sigma\\
&+ \int_{\Omega}\phi(-\Delta)^sv\;\dx+\int_{\RR^N\setminus\Omega}\phi\mathcal N_sv\;\dx\\
=&\langle \phi_t(t,\cdot),v(t,\cdot) \rangle_{H^{-1}(\Om),H_0^1(\Om)} +\int_{\Omega}\phi \mathscr{L}  v\;\dx+\int_{\pOm}u_1\partial_\nu v\;\mathrm{d}\sigma
+\int_{\RR^N\setminus\Omega}u_2\mathcal N_s v\;\dx.
\end{align*}
Integrating the previous identity by parts over $(0,T)$, we get that
\begin{align*}
-\int_0^T(\phi(\cdot,t),v_t(\cdot,t))_{L^2(\Om)} \;\dt &+\int_Q\phi \mathscr{L}  v\;\dx\;\dt +\int_{\Gamma}u\partial_\nu v\;\mathrm{d}\sigma \dt+\int_{\Sigma}u\mathcal N_s v\;\dx\; \dt=0.
\end{align*}
Since $\phi$, $v_\in L^2(Q)$, it follows from the preceding identity that
 \begin{align}\label{JJ}
  \int_{Q}\phi\Big(-v_t+ \mathscr{L}v\Big)\;\dx\; \dt=-\int_{\Gamma}u\partial_\nu v\;\mathrm{d}\sigma \; \dt-\int_{\Sigma}u\mathcal N_sv\;\dx\; \dt
 \end{align}
  for every $v \in L^2((0,T);H^2(\Omega)\cap H_0^1(\Omega)) \cap H^1((0,T);L^2(\Om))$ with $v(\cdot,T)=0$ a.e. in $\Om$.  Since $L^2((0,T);H^2(\Omega)\cap H_0^1(\Omega)) \cap H^1((0,T);L^2(\Om))$ is dense in $ L^2((0,T);\mathbb V) \cap H^1((0,T);L^2(\Om))$, it follows that \eqref{JJ} remains true for every $ v\in L^2((0,T);\mathbb V) \cap H^1((0,T);L^2(\Om))$ with $v(\cdot,T)=0$ a.e. in $\Om$. Thus, $\phi$ is a very-weak solution of \eqref{p1nh}. \medskip

 (b) Let $\phi$ be a very-weak solution to \eqref{p1nh} and assume that $\phi \in \mathbb{H}$.
Then $\phi=u_2$ in $\Sigma$ and $\phi|_{\Gamma}=u_1$ on $\Gamma$. Let  $\tilde{u} \in H^1((0,T);H^{1}(\RR^N))\cap H^1((0,T);L^2(\mathbb R^N))$ be such that $\tilde u|_{\Sigma}=u_2$. Then, clearly $\phi-\tilde{u} \in
 \mathbb{H}$.  As $\phi$ is a very-weak solution to \eqref{p1nh} and $0<s\le 3/4$, it follows from
 Definition~\ref{def:VWS} that for every $v \in L^2((0,T);\mathbb V) \cap H^1((0,T);L^2(\Om))$, we have
 \begin{align}\label{e3}
  \int_{Q}\phi(- v_t + \mathscr{L} v)\;\dx
  =-\int_{\Gamma}u_1\partial_\nu v\;\mathrm{d}\sigma\;\dt-\int_{\Sigma}u_2\mathcal N_sv\;\dx\;\dt.
 \end{align}
 Since $\phi \in \mathbb{H}$, $v = 0$ in $\Sigma$ and $v = 0$ on $\Gamma$, using the integration by parts formulas
 \eqref{Int-Part}-\eqref{IBP-L} and a density argument (taking first $v \in L^2((0,T);H^2(\Om)\cap H_0^1(\Om)) \cap H^1((0,T);L^2(\Om))$ and then using a density argument as above), we can deduce that
 \begin{align}\label{e4}
\int_0^T& \langle \phi_t(\cdot,t), (\cdot,t)v \rangle_{H^{-1}(\Om),H_0^1(\Om)}\;\dt +\int_Q\nabla \phi\cdot\nabla v\;\dx\; \dt+\int_0^T \int_{\RR^N}\mathcal F(\phi,v)\;\dt\notag\\
&=\int_{Q}\phi\Big(-v_t + \mathscr{L}v\Big)\;\dx\; \dt+\int_{\Gamma}\phi\partial_\nu v\;\mathrm{d}\sigma\;\dt+\int_{\Sigma}\phi\mathcal N_sv\;\dx\; \dt\notag\\
&=\int_{Q}\phi\Big( -v_t + \mathscr{L}v\Big)\;\dx\; \dt+\int_{\Gamma}u_1\partial_\nu v\;\mathrm{d}\sigma\;\dt+\int_{\Sigma}u_2\mathcal N_sv\;\dx\; \dt.
\end{align}
It follows from \eqref{e3}- \eqref{e4} that for every
$v \in L^2((0,T);\mathbb V) \cap H^1((0,T);L^2(\Om))$ we have
 \begin{equation}\label{eq:e5}
    \int_0^T\langle \phi_t(t,\cdot), v(t,\cdot) \rangle\;\dt  +
    \int_0^T\left(\int_{\Omega}\nabla \phi\cdot\nabla v\;\dx+\mathcal F(\phi,v)\right)\;\dt= 0.
 \end{equation}
 Since $\mathbb V$ is dense in $H_0^{1}(\Om)$ and $L^2(\Om)$ is dense in
 $H^{-1}(\Om)$, it follows that \eqref{eq:e5} remains true for every
$v\in L^2((0,T);H_0^{1}(\Om)) \cap H^1((0,T);H^{-1}(\Om))$ with $v(\cdot,T)=0$ a.e. in $\Om$. Notice that $v(\cdot,t) \in H^{1}_0(\Om)$ for a.e.
 $t \in (0,T]$. As a result, we
 have that the following pointwise formulation
 \begin{equation}\label{eq:e6}
    \langle \phi_t(\cdot,t), v \rangle_{H^{-1}(\Om),H_0^1(\Om)}\;  +
    \int_{\Omega}\nabla \phi\cdot\nabla v\;\dx +\mathcal F(\phi,v)= 0
 \end{equation}
 holds for every $v \in H_0^{1}(\Om)$ and a.e. $t\in (0,T)$. We have shown that $\phi$ is the
 unique weak solution of  \eqref{p1nh} according to Definition~\ref{Dwsi}. The
 proof is finished.
 \end{proof}

 To conclude this section we discuss the notion of very-weak solutions to the system \eqref{p1nh} by taking into account the value $\phi(\cdot,T)$. 

 \begin{definition}\label{def:VWS-2}
A function $\phi\in L^2((0,T)\times\RR^N)$ is said to be a a very weak-solution  (or a solution by transposition) to the system \eqref{p1nh} if the  identity
\begin{align}\label{VWS-2}
\int_{Q}\phi\Big(-\varphi_t+ \mathscr{L}\varphi\Big)\;\dx\;\dt
=&\langle \phi(\cdot,T),\varphi(\cdot,T)\rangle_{H^{-1}(\Om),H_0^1(\Om)}\notag\\
&-\int_{\Gamma} u_1\partial_\nu \varphi \;\mathrm{d}\sigma\, \dt -\int_\Sigma u_2\mathcal{N}_s\varphi \;\dx\, \dt,
\end{align}
holds, for every $\varphi\in C([0,T],\mathbb V)\cap H^1((0,T); L^2(\Omega))$.
\end{definition}

The proof of the following result follows exactly as the proof of Theorem \ref{thm:VWS_Exist_1}. We omit it for brevity.

\begin{proposition}
Let $0<s\le 3/4$. Let $u_1\in L^2(\Gamma)$ and $u_2\in L^2(\Sigma)$. Then, there exists a unique very-weak solution $\phi\in L^2((0,T)\times\RR^N)$ of \eqref{p1nh} in the sense of Definition~\ref{def:VWS-2}.
\end{proposition}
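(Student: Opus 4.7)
The strategy mirrors the proof of Theorem \ref{thm:VWS_Exist_1}, with the only novelty being that the dual problem is now allowed to have a nonzero terminal datum $\xi$ at time $T$, which produces the extra term $\langle \phi(\cdot,T),\varphi(\cdot,T)\rangle_{H^{-1}(\Om),H_0^1(\Om)}$ in \eqref{VWS-2}. First, for a pair $(\eta,\xi)\in L^2(Q)\times\mathbb V$, I would consider the backward-in-time dual system
\begin{equation*}
-\varphi_t+\mathscr{L}\varphi=\eta\ \text{in }Q,\quad \varphi=0\ \text{on }\Gamma,\quad \varphi=0\ \text{in }\Sigma,\quad \varphi(\cdot,T)=\xi\ \text{in }\Omega.
\end{equation*}
After the time reversal $\tilde\varphi(\cdot,t)=\varphi(\cdot,T-t)$, this becomes a forward Cauchy problem for the semigroup $(e^{-t\mathbb A})_{t\ge 0}$ with initial datum $\xi\in D(\mathbb A)=\mathbb V$. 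By Theorem \ref{Thm1} and the analytic semigroup machinery for classical solutions, the problem admits a unique solution $\varphi\in C([0,T],\mathbb V)\cap H^1((0,T);L^2(\Omega))$ depending continuously on $(\eta,\xi)$. Under the standing assumption $0<s\le 3/4$, Step 1 of the proof of Theorem \ref{th-36} then delivers $\partial_\nu\varphi\in L^2(\Gamma)$ and $\mathcal N_s\varphi\in L^2(\Sigma)$ with estimates controlled by $\|\eta\|_{L^2(Q)}+\|\xi\|_{\mathbb V}$.

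Second, I would define the bounded linear map
\begin{equation*}
\Lambda:L^2(Q)\times\mathbb V\to L^2(\Gamma)\times L^2(\Sigma),\qquad (\eta,\xi)\mapsto(-\partial_\nu\varphi,-\mathcal N_s\varphi),
\end{equation*}
with $\varphi$ the solution above, and apply its adjoint to $u\coloneqq(u_1,u_2)$. This produces a pair $(\phi|_Q,\phi_T)\in L^2(Q)\times\mathbb V'$ such that
\begin{equation*}
\int_Q\phi\,\eta\,\dx\,\dt+\langle \phi_T,\xi\rangle_{\mathbb V',\mathbb V}=-\int_\Gamma u_1\,\partial_\nu\varphi\,\mathrm{d}\sigma\,\dt-\int_\Sigma u_2\,\mathcal N_s\varphi\,\dx\,\dt
\end{equation*}
for every admissible $(\eta,\xi)$. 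Extending $\phi\coloneqq u_2$ in $\Sigma$ and setting $\phi(\cdot,T)\coloneqq\phi_T$, this is exactly \eqref{VWS-2}. The $L^2((0,T)\times\R^N)$-bound for $\phi$ is recovered by specializing to $\xi=0$, exactly as in Theorem \ref{thm:VWS_Exist_1}. To upgrade $\phi_T$ from $\mathbb V'$ to $H^{-1}(\Omega)$, I would take $\eta=0$ and prove the sharper estimate $\|\partial_\nu\varphi\|_{L^2(\Gamma)}+\|\mathcal N_s\varphi\|_{L^2(\Sigma)}\le C\|\xi\|_{H_0^1(\Omega)}$, using the energy identity for $\tilde\varphi$ started from $\xi\in H_0^1(\Omega)$, parabolic smoothing of $(e^{-t\mathbb A})_{t\ge 0}$, and the $s\le 3/4$ trace results from Remark \ref{rem-21} and Step 1 of Theorem \ref{th-36}. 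Since $\mathbb V$ is dense in $H_0^1(\Omega)$, this extends $\phi_T$ to a continuous functional on $H_0^1(\Omega)$, i.e. an element of $H^{-1}(\Omega)$.

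Third, uniqueness is almost free. Given two very-weak solutions $\phi_1,\phi_2$ of \eqref{p1nh} in the sense of Definition \ref{def:VWS-2} with the same data, testing \eqref{VWS-2} against functions $\varphi$ with $\varphi(\cdot,T)=0$ reduces the identity to Definition \ref{def:VWS} with zero boundary-exterior data, so Theorem \ref{thm:VWS_Exist_1} forces $\phi_1=\phi_2$ a.e. in $(0,T)\times\R^N$. Varying $\xi\in\mathbb V$ next pins down $\phi_1(\cdot,T)=\phi_2(\cdot,T)$ in $\mathbb V'$, and hence in $H^{-1}(\Omega)$ after the extension above.

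The main obstacle I anticipate is the second step's upgrade of $\phi_T$ from the very weak space $\mathbb V'$ to $H^{-1}(\Omega)$. This requires a $\xi$-dependent trace estimate for the dual parabolic equation that simultaneously exploits the smoothing of the local component and the nonlocal trace bound $\|\mathcal N_s v\|_{L^2(\Omb)}\le C\|v\|_{H_0^1(\Omega)}$, and it is precisely here that the threshold $0<s\le 3/4$ is forced, mirroring the bottleneck already encountered in Theorems \ref{th-36} and \ref{thm:VWS_Exist_1}.
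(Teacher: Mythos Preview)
Your proposal is correct and in spirit matches the paper's intended argument: the paper omits the proof entirely, stating only that it ``follows exactly as the proof of Theorem~\ref{thm:VWS_Exist_1}.'' What you have written is precisely a fleshing-out of that statement, with the natural extra ingredient of allowing a nonzero terminal datum $\xi$ in the dual problem.

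There is one difference worth flagging. You build $\phi(\cdot,T)$ directly as part of the adjoint construction, first obtaining $\phi_T\in\mathbb V'$ and then upgrading to $H^{-1}(\Omega)$ via the estimate $\|\partial_\nu\varphi\|_{L^2(\Gamma)}+\|\mathcal N_s\varphi\|_{L^2(\Sigma)}\le C\|\xi\|_{H_0^1(\Omega)}$. This upgrade works: since $H_0^1(\Omega)=D(\mathbb A^{1/2})$, the spectral (or energy) identity gives $\|\mathbb A\tilde\varphi\|_{L^2(Q)}^2\le \tfrac12\|\mathbb A^{1/2}\xi\|^2$, so $\varphi\in L^2((0,T);\mathbb V)$ with bound by $\|\xi\|_{H_0^1}$, and Step~1 of Theorem~\ref{th-36} then delivers the trace estimates. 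The paper takes a different route to the same conclusion: it postpones the regularity of $\phi(\cdot,T)$ to Remark~\ref{rem-318}, where Lions--Magenes evolution theory is invoked to place $\phi$ in $C([0,T];H^{-1}(\Omega))$ a posteriori. Your approach is more self-contained; the paper's is shorter but leans on an external black box.

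One small point to tighten in your uniqueness step: the test class in Definition~\ref{def:VWS-2} is $C([0,T];\mathbb V)\cap H^1((0,T);L^2(\Omega))$, which is strictly smaller than the class $L^2((0,T);\mathbb V)\cap H^1((0,T);L^2(\Omega))$ used in Definition~\ref{def:VWS}. For general $\eta\in L^2(Q)$ the dual solution with $\varphi(\cdot,T)=0$ lies only in the latter space, not in $C([0,T];\mathbb V)$. To recover uniqueness you should insert a density argument: test functions smooth in time with values in $\mathbb V$ are dense in the larger class, and the image under $\varphi\mapsto -\varphi_t+\mathscr L\varphi$ is therefore dense in $L^2(Q)$, which suffices.
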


\begin{remark}\label{rem-318}
Let us discuss the regularity of very weak solutions in the sense of Definition~\ref{def:VWS-2}.  Let $\mathbb V^\star$ denotes the dual of $\mathbb V$ with respect to the pivot space $L^2(\Omega)$.
Noticing that the operator $ \mathscr{L}$ maps $L^2(\Omega)$ into $\mathbb V^\star\hookrightarrow H^{-2}(\Omega)$, and using the theory of evolution equations (see e.g. \cite[Chapter III, Section 9,6]{lions1971}), we can show that every  very-weak solution $\phi$ of \eqref{p1nh} in the sense of Definition~\ref{def:VWS-2} belongs to $L^2((0,T)\times\RR^N)\cap H^1((0,T),H^{-1}(\Omega))$. Thus, $\phi\in C([0,T],H^{-1}(\Omega))$ so that  \eqref{VWS-2} makes sense for every $\varphi\in C([0,T],\mathbb V)\cap H^1((0,T); L^2(\Omega))$. This fact will be used in Section \ref{sec-OCP} below.
\end{remark}

\section{Optimal control problems of mixed local-nonlocal PDEs}\label{Sec:opt}

The purpose of this section is to study the optimal control problem \eqref{Eq:main_A}. Recall that
$\mathcal{Z}_{D} \coloneqq L^2(\Gamma)\times L^{2}(\Sigma)$ is endowed with the norm given by
\begin{equation}\label{normzd}
\|(u_{1},u_{2})\|_{ \mathcal{Z}_{D}}=\Big(\|u_{1}\|^2_{L^2(\Gamma)}+\|u_{2}\|^2_{ L^{2}(\Sigma)}\Big)^{\frac 12}.
\end{equation}

We consider the following controlled equation:
\begin{equation}\label{eqcontrol}
\begin{cases}
\psi_{t} + \mathscr{L}\psi = 0 & \mbox{ in }\; Q,\\
\psi=u_1&\mbox{ on }\;\Gamma,\\
\psi = u_2 &\mbox{ in }\; \Sigma , \\
\psi(\cdot,0) = 0, &\mbox{ in }\; \Omega,
\end{cases}
\end{equation}
where the control $(u_1,u_2) \in \mathcal{Z}_{ad}$ with $\mathcal{Z}_{ad}$ being a closed and convex subset of $\mathcal{Z}_{D}$.

Under the assumption on the data, and $0<s\le 3/4$,  we know from Theorem \ref{thm:VWS_Exist_1} that there exists a $\psi\in L^2((0,T)\times \R^N)$ which is the unique very-weak solution to \eqref{Eq:main_A} in the sense of Definition \ref{def:VWS} or Definition \ref{def:VWS-2}.  On the other hand, it follows from Remark \ref{rem-318} that
$\psi(\cdot,T)$ exists and belongs to $H^{-1}(\Om)$.
Therefore, we can define on $\mathcal{Z}_{ad}$ the following two cost functions:
\begin{equation}\label{defJ1}
J_1(u_1,u_2)\coloneqq \frac{1}{2}\|\psi((u_{1},u_{2}))-z_d^1\|_{L^2(Q)}^2 + \frac{\beta}{2}\|(u_{1},u_{2})\|^{2}_{\mathcal{Z} _{D}}
\end{equation}
and
\begin{equation}\label{defJ2}
J_2(u_1,u_2)\coloneqq \frac{1}{2}\|\psi(T;(u_{1},u_{2}))-z_d^2\|_{H^{-1}(\Om)}^2 + \frac{\beta}{2}\|(u_{1},u_{2})\|^{2}_{\mathcal{Z} _{D}},
\end{equation}
where  $\beta>0$ is a real number,  $z_d^1\in L^2(Q) $,  $z_d^2\in H^{-1}(\Om)$, $\psi:=\psi(u_1,u_2)$ is the unique very-weak solution of \eqref{eqcontrol},
and
$$
\|\phi\|^2_{H^{-1}(\Omega)}=\langle (-\Delta_D)^{-1}\phi,\phi\rangle_{H^1_0(\Omega),H^{-1}(\Omega)}.
$$
Here, $(-\Delta_D)^{-1}\phi=\varrho$ with $\varrho$ the unique solution of the Dirichlet problem
\begin{align*}
-\Delta \varrho=\phi\;\mbox{  in }\; \Omega\;\mbox{ and }\; \varrho = 0\;\mbox{ on }\; \partial \Omega.
\end{align*}

 We are interested in the following minimization problems:
\begin{equation}\label{pbcontrol1}
\min_{ (v_1,v_2)\in \mathcal{Z}_{ad}} J_i((v_1,v_2)),\, i=1,2.
\end{equation}

\subsection{The first optimal control problem}\label{First-OCP}

In this section we consider the minimization problem \eqref{pbcontrol1}-\eqref{defJ1} with the functional $J_1$.
We have the following existence result of optimal solutions.

\begin{proposition}\label{prop1N}
Let $0<s\le 3/4$, $u_1\in L^2(\Gamma)$, and $u_2\in L^2(\Sigma)$.	Let $\mathcal{Z}_{ad}$ be a closed and convex subset of $\mathcal{Z}_{D}$, and  let $\psi=\psi(u_1,u_2)$ satisfy \eqref{eqcontrol} in the very-weak sense. Then there exists a unique  control $(u_1^\star,u_2^\star)\in \mathcal{Z}_{ad}$ solution of \begin{equation}\label{pbcontrol11}
\inf_{ (v_1,v_2)\in \mathcal{Z}_{D}} J_1((v_1,v_2)).
\end{equation}
\end{proposition}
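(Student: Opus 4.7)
The plan is to apply the direct method of the calculus of variations, together with the linearity of the control-to-state map established in Theorem \ref{thm:VWS_Exist_1}. First, observe that $J_1 \ge 0$, so $m \coloneqq \inf_{(v_1,v_2)\in \mathcal{Z}_{ad}} J_1(v_1,v_2) \in [0,\infty)$. Pick a minimizing sequence $(u_1^n, u_2^n)_{n\in\mathbb{N}} \subset \mathcal{Z}_{ad}$ with $J_1(u_1^n,u_2^n)\to m$. Since $\beta>0$, the term $\tfrac{\beta}{2}\|(u_1^n,u_2^n)\|^2_{\mathcal{Z}_D}$ is bounded, hence the sequence is bounded in the Hilbert space $\mathcal{Z}_D = L^2(\Gamma)\times L^2(\Sigma)$. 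Extracting a subsequence, not relabeled, we get $(u_1^n, u_2^n) \rightharpoonup (u_1^\star, u_2^\star)$ weakly in $\mathcal{Z}_D$. Because $\mathcal{Z}_{ad}$ is closed and convex (hence weakly closed by Mazur's theorem), we have $(u_1^\star, u_2^\star)\in \mathcal{Z}_{ad}$.

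Next, I would exploit the fact that the map $\mathcal{S}:\mathcal{Z}_D \to L^2((0,T)\times\R^N)$, $(u_1,u_2) \mapsto \psi(u_1,u_2)$ given by Theorem \ref{thm:VWS_Exist_1} is linear, and by estimate \eqref{VWS_EST_1}, continuous. A linear continuous map between Banach spaces is weakly sequentially continuous, so
\begin{equation*}
\psi^n := \mathcal{S}(u_1^n,u_2^n) \rightharpoonup \mathcal{S}(u_1^\star,u_2^\star) =: \psi^\star \quad \text{weakly in } L^2((0,T)\times\R^N),
\end{equation*}
which in particular gives weak convergence in $L^2(Q)$. Consequently, $\psi^n - z_d^1 \rightharpoonup \psi^\star - z_d^1$ weakly in $L^2(Q)$, and since norms on Hilbert spaces are weakly lower semicontinuous,
\begin{equation*}
\|\psi^\star - z_d^1\|_{L^2(Q)}^2 \le \liminf_{n\to\infty} \|\psi^n - z_d^1\|_{L^2(Q)}^2, \qquad \|(u_1^\star,u_2^\star)\|_{\mathcal{Z}_D}^2 \le \liminf_{n\to\infty}\|(u_1^n,u_2^n)\|_{\mathcal{Z}_D}^2.
\end{equation*}
Adding these with the prefactors $\tfrac{1}{2}$ and $\tfrac{\beta}{2}$ yields $J_1(u_1^\star,u_2^\star) \le \liminf_n J_1(u_1^n,u_2^n) = m$, so $(u_1^\star,u_2^\star)$ is a minimizer.

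For uniqueness, I would note that $J_1$ is strictly convex on $\mathcal{Z}_{ad}$. Indeed, $(u_1,u_2) \mapsto \psi(u_1,u_2) - z_d^1$ is affine (by linearity of $\mathcal{S}$), so $(u_1,u_2)\mapsto \tfrac12\|\psi(u_1,u_2)-z_d^1\|_{L^2(Q)}^2$ is convex, while the Tikhonov-type regularization $(u_1,u_2)\mapsto \tfrac{\beta}{2}\|(u_1,u_2)\|^2_{\mathcal{Z}_D}$ with $\beta>0$ is strictly convex on the Hilbert space $\mathcal{Z}_D$. Hence $J_1$ is strictly convex, and as $\mathcal{Z}_{ad}$ is convex, the minimizer is unique. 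The only subtlety is making sure Theorem \ref{thm:VWS_Exist_1} indeed provides the linear control-to-state map with the needed $L^2$-estimate (valid for $0<s\le 3/4$); once this is in hand, the argument is entirely standard and presents no substantial obstacle.
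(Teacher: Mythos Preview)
Your proof is correct, but it takes a different route from the paper. The paper does not run a minimizing-sequence argument here; instead it expands $J_1$ as a quadratic form and verifies the hypotheses of Lions' abstract minimization lemma \cite[Chapter~II, Section~1.2]{lions1971}. Concretely, the paper writes
\[
J_1(v_1,v_2)=\pi\big((v_1,v_2),(v_1,v_2)\big)-L(v_1,v_2)+\tfrac12\|z_d^1\|_{L^2(Q)}^2,
\]
checks that the symmetric bilinear form $\pi$ is continuous and coercive on $\mathcal Z_D$ (coercivity coming immediately from the Tikhonov term with constant $\beta/2$), and that $L$ is linear and continuous (both via \eqref{VWS_EST_1}); existence and uniqueness then drop out of the abstract result. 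Your direct-method argument---weak compactness in $\mathcal Z_D$, weak closedness of $\mathcal Z_{ad}$ by Mazur, weak-to-weak continuity of the linear solution map $\mathcal S$, and weak lower semicontinuity of the two norm terms---is equally valid and is in fact the strategy the paper itself adopts for the \emph{second} control problem (Proposition~\ref{thm:docexist}). The Lions-form approach is a bit more economical once one recognizes the quadratic structure; your approach is more self-contained and makes the role of the control-to-state estimate \eqref{VWS_EST_1} explicit at the level of weak convergence rather than hiding it inside an abstract lemma.
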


\begin{proof}
Firstly,	observe that if  $(u_1,u_2)=(0,0)$, then  \eqref{eqcontrol} has the unique solution $\psi(0,0)=0$.

Secondly, a simple calculation gives
\begin{align*}
J_1(v_1,v_2)\coloneqq &\dis  \frac{1}{2}\|\psi((v_1,v_2))-z_d^1\|_{L^2(Q)}^2 + \frac{\beta}{2}\|(v_1,v_2)\|^{2}_{\mathcal{Z} _{D}}
\\
=&\dis\frac 12 \|\psi(v_1,v_2)\|^2_{L^2(Q)} -\dis \int_Q \psi(v_1,v_2)\, z_d^1\;\dx\;\dt
\dis +\frac 12 \|z_d^1\|^2_{L^2(Q)}+\frac{\beta}{2}\|(v_1,v_2)\|^{2}_{\mathcal{Z} _{D}}
\\
= &\pi((v_1,v_2),(v_1,v_2))-L((v_1,v_2))+
\|z_d^1\|^2_{L^2(Q)}
\end{align*}
where
\begin{align*} 
  \pi((u_1,u_2),(v_1,v_2)) \coloneqq \dis\frac 12 \int_Q \psi(u_1,u_2)\,\psi(v_1,v_2) \;\dx\;\dt
 + \frac{\beta}{2}\int_{\Gamma}v_1\, u_1 \;\dx\;\dt+ \frac{\beta}{2}\int_{\Sigma}v_2\, u_2 \;\dx\;\dt,
  \end{align*}
and
  \begin{equation*} 
  L(v_1,v_2) \coloneqq \dis \int_Q \psi(v_1,v_2)\,z_d^1\;\dx\;\dt.
\end{equation*}

It is clear that $\pi((\cdot,\cdot),(\cdot,\cdot))$ is a bilinear  and symmetric functional.
\begin{enumerate}
  \item We claim that  $\pi((\cdot,\cdot),(\cdot,\cdot))$ is continuous on $\mathcal{Z}_{ad}$. Indeed,  let $(u_1,u_2),(v_1,v_2)\in \mathcal{Z}_{ad}$. Using \eqref{VWS_EST_1}, we get that there is a constant $C>0$ such that
  \begin{align*}
  |\pi((u_1,u_2),(v_1,v_2))| \leq & \frac 12\|\psi(u_1,u_2)\|_{L^2(Q)}\|\psi(v_1,v_2)\|_{L^2(Q)}\\
  &+\dis \frac{\beta}{2}\|v_1\|_{L^2(\Gamma)}\, \|u_1\|_{L^2(\Gamma)}+\dis \frac{\beta}{2}\|v_2\|_{L^2(\Sigma)}\, \|u_2\|_{L^2(\Sigma)}\\
  \leq& C \left(\|u_1\|^2_{L^2(\Gamma)}+\|u_2\|^2_{L^2(\Sigma)}\right)^{1/2}
  \left(\|v_1\|^2_{L^2(\Gamma)}+\|v_2\|^2_{L^2(\Sigma)}\right)^{1/2}\\
  &+\dis  \frac{\beta}{2}\left(\|v_1\|^2_{L^2(\Gamma)}+\|v_2\|^2_{L^2(\Sigma)}\right)^{1/2}
  \left(\|u_1\|^2_{L^2(\Gamma)}+\|u_2\|^2_{L^2(\Sigma)}\right)^{1/2}\\
  \leq&\dis \left(C+\frac{\beta}{2}\right)\|(v_1,v_2)\|_{\mathcal{Z} _{D}}\|(u_1,u_2)\|_{\mathcal{Z} _{D}},
  \end{align*}
  and the claim is proved.\medskip

  \item We claim that $\pi((\cdot,\cdot),(\cdot,\cdot))$ is coercive on $\mathcal{Z}_{ad}$. Indeed, for all  $(u_1,u_2),(v_1,v_2)\in \mathcal{Z}_{ad}$, we have
  \begin{align*}
  \pi((u_1,u_2),(u_1,u_2)) = \frac 12 \|\psi(u_1,u_2)\|_{L^2(Q)}^2+\frac{\beta}{2}\|(v_1,v_2)\|^{2}_{\mathcal{Z} _{D}}\geq \frac{\beta}{2}\|(v_1,v_2)\|^{2}_{\mathcal{Z} _{D}},
  \end{align*}
  and we have shown the coercivity.

  \item Finally, using \eqref{VWS_EST_1}, we get that there is a constant $C>0$ such that for all  $(v_1,v_2)\in \mathcal{Z}_{ad}$,
  \begin{align*}
  |L(v_1,v_2)| \leq  \|\psi(v_1,v_2))\|_{L^2(Q)}\|z_d^1\|_{L^2(Q)}
  \leq C \|z_d^1\|_{L^2(Q)}\|(v_1,v_2)\|^{2}_{\mathcal{Z} _{D}}.
  \end{align*}
We have shown that  the functional $L$ is linear and continuous on $\mathcal{Z}_{ad}$.
  \end{enumerate}
Using the abstract results in \cite[Chapter II, Section 1.2]{lions1971}, we can then deduce that there exists a unique  $(u_{1}^\star, u_{2}^\star)\in \mathcal{Z}_{ad}$  solution to \eqref{pbcontrol11}.  The proof is complete.
\end{proof}


Next, we characterize the optimality conditions.

\begin{theorem}\label{Thm6}
Let $0<s\le 3/4$ and $\mathbb{U}=L^2((0,T);H^1_0(\Omega))\cap H^1((0,T);H^{-1}(\Omega)).$ Let also $\mathcal{Z}_{ad}$ be a closed convex subspace of $ \mathcal{Z}_{D}$,  and $(u_{1}^{\star}, u_{2}^{\star})$ be the minimizer  \eqref{pbcontrol11} over $\mathcal{Z}_{ad}$. Then, there exist $p^{\star}$ and $\psi^\star$ such that the triplet $(\psi^{\star},p^{\star},(u_{1}^{\star}, u_{2}^{\star}))\in L^2((0,T)\times \R^N)\times \mathbb{U}\times \mathcal{Z}_{ad}$ satisfies the following optimality systems:
\begin{equation}\label{optrho1}
\begin{cases}
\psi^{\star}_{t} + \mathscr{L}\psi^{\star} = 0 & \mbox{ in }\; Q,\\
\psi^{\star} = u_1^\star &\mbox{ in }\; \Gamma , \\
\psi^{\star} = u_{2}^{\star} &\mbox{ in }\; \Sigma , \\
\psi^{\star}(\cdot,0) = 0 &\mbox{ in }\; \Omega,
\end{cases}
\end{equation}
and
\begin{equation}\label{optp1}
\begin{cases}
-p^{\star}_t + \mathscr{L} p^{\star} = z_d^1-\psi^{\star} &\mbox{ in } Q, \\
p^{\star} = 0 &\mbox{ in }\; \Sigma , \\
p^{\star}(\cdot,T)  = 0 &\mbox{ in }\; \Om,
\end{cases}
\end{equation}
and
\begin{equation}\label{opu1}
 \int_\Gamma\Big(\partial_\nu p^{\star}+\beta u_{1}^{\star}\Big)(v_1-u_{1}^{\star})\, \mathrm{d}\sigma\, \dt+
 \int_\Sigma\Big(\mathcal{N}_s p^{\star}+\beta u_{2}^{\star}\Big)(v_2-u_{2}^{\star})\, \dx\, \dt\geq 0 \;\;\;\forall (v_1,v_2)\in \mathcal{Z}_{ad}.
 \end{equation}
 In addition,
 \begin{align}\label{coroopu1}
 (u_{1}^{\star}, u_{2}^{\star})= \dis \mathbb P(-\beta^{-1}\partial_\nu p^{\star}, -\beta^{-1}\mathcal{N}_s p^{\star}) 
 \end{align}
 where $\mathbb P$ is the projection onto the set $\mathcal{Z}_{ad}$.
\end{theorem}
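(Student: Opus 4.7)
The plan is to derive the optimality system by the standard Euler inequality combined with an adjoint state argument, exploiting the transposition formula \eqref{VWS} that already underlies the definition of very-weak solutions. Since $\mathcal{Z}_{ad}$ is a closed convex subset of $\mathcal{Z}_D$ and $J_1$ is Fréchet-differentiable and strictly convex (as established implicitly in the proof of Proposition \ref{prop1N}), the minimizer $(u_1^\star,u_2^\star)$ is characterized by
\[
J_1'(u_1^\star,u_2^\star)\bigl((v_1,v_2)-(u_1^\star,u_2^\star)\bigr)\ge 0\qquad\forall\,(v_1,v_2)\in\mathcal{Z}_{ad}.
\]
Using the linearity of the control-to-state map $(u_1,u_2)\mapsto \psi(u_1,u_2)$ (which is immediate from Definition \ref{def:VWS}), I would expand this to get
\[
\int_Q(\psi^\star-z_d^1)\,\psi(w_1,w_2)\,\dx\,\dt+\beta\int_\Gamma u_1^\star w_1\,\mathrm{d}\sigma\,\dt+\beta\int_\Sigma u_2^\star w_2\,\dx\,\dt\ge 0,
\]
where $\psi^\star\coloneqq \psi(u_1^\star,u_2^\star)$ and $(w_1,w_2)\coloneqq (v_1-u_1^\star,v_2-u_2^\star)$.

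Next, I would introduce the adjoint state $p^\star$ as the weak solution of the backward problem \eqref{optp1}. After the change of variables $\tau=T-t$, Theorem \ref{Thm1} yields a unique $p^\star\in\mathbb{U}$ with the additional regularity $p^\star\in L^2((0,T);\mathbb{V})\cap H^1((0,T);L^2(\Omega))$ (this is the ``strong solution'' regularity coming from the semigroup $(e^{-t\mathbb{A}})_{t\ge 0}$, since $z_d^1-\psi^\star\in L^2(Q)$). Crucially, the restriction $0<s\le 3/4$ now kicks in exactly as in Step 1 of the proof of Theorem \ref{th-36}: for a.e.\ $t$ we have $p^\star(\cdot,t)\in\mathbb{V}\subset H^2(\Omega)\cap H_0^1(\Omega)$, which guarantees $\partial_\nu p^\star\in L^2(\Gamma)$ and $\mathcal{N}_s p^\star\in L^2(\Sigma)$. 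This is the one delicate regularity input and the sole reason the restriction $s\le 3/4$ appears in the statement.

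With $p^\star$ in hand, I would test the very-weak formulation of \eqref{eqcontrol} (satisfied by $\psi(w_1,w_2)$, which corresponds to boundary-exterior data $(w_1,w_2)\in\mathcal{Z}_D$) against $p^\star$: since $p^\star(\cdot,T)=0$ in $\Omega$ and $p^\star=0$ in $\Sigma$, Definition \ref{def:VWS} gives
\[
\int_Q\psi(w_1,w_2)(-p^\star_t+\mathscr{L}p^\star)\,\dx\,\dt=-\int_\Gamma w_1\,\partial_\nu p^\star\,\mathrm{d}\sigma\,\dt-\int_\Sigma w_2\,\mathcal{N}_sp^\star\,\dx\,\dt.
\]
Because $-p^\star_t+\mathscr{L}p^\star=z_d^1-\psi^\star$ by \eqref{optp1}, the left-hand side equals $\int_Q(z_d^1-\psi^\star)\psi(w_1,w_2)\,\dx\,\dt$. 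Substituting this identity into the Euler inequality eliminates $\psi(w_1,w_2)$ and yields \eqref{opu1} directly.

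Finally, the projection formula \eqref{coroopu1} follows from \eqref{opu1} by the standard characterization of the metric projection onto a closed convex subset of a Hilbert space: \eqref{opu1} states precisely that $(-\beta^{-1}\partial_\nu p^\star,-\beta^{-1}\mathcal{N}_sp^\star)-(u_1^\star,u_2^\star)$ lies in the normal cone to $\mathcal{Z}_{ad}$ at $(u_1^\star,u_2^\star)$ with respect to the inner product of $\mathcal{Z}_D$, whence $(u_1^\star,u_2^\star)=\mathbb{P}(-\beta^{-1}\partial_\nu p^\star,-\beta^{-1}\mathcal{N}_sp^\star)$. The main obstacle throughout is purely the regularity of $p^\star$ discussed above; the rest of the argument is a routine unwinding of the transposition identity, and this is exactly why the restriction $0<s\le 3/4$ cannot be avoided with the present notion of very-weak solutions.
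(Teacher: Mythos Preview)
Your proposal is correct and follows essentially the same approach as the paper: derive the Euler inequality via linearity of the control-to-state map, introduce the adjoint $p^\star$ solving \eqref{optp1}, use $p^\star$ as a test function in the very-weak formulation \eqref{VWS} for the sensitivity state $\psi(w_1,w_2)$, and combine to obtain \eqref{opu1}, with \eqref{coroopu1} following from the standard projection characterization. If anything, you are slightly more explicit than the paper about the regularity $p^\star\in L^2((0,T);\mathbb V)\cap H^1((0,T);L^2(\Omega))$ (maximal regularity from the semigroup with $L^2$ source and zero final datum) that is needed to make $p^\star$ an admissible test function and to ensure $\partial_\nu p^\star\in L^2(\Gamma)$, $\mathcal N_s p^\star\in L^2(\Sigma)$ under $0<s\le 3/4$.
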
	

\begin{proof}
 Let  $(u_{1}^\star, u_{2}^\star)\in \mathcal{Z}_{ad}$  be the unique solution of the minimization problem \eqref{pbcontrol11}. We denote by $\psi^\star\coloneqq \psi^\star(u_{1}^\star, u_{2}^\star)$ the associated state so that, $\psi^\star$ solves the system \eqref{optrho1} in the very-weak sense.

Using classical duality arguments we have that \eqref{optp1} is the dual system associated with \eqref{optrho1}.  Since $z_d^1-\psi^{\star} \in L^2(Q)$, it follows that  \eqref{optp1} has a unique weak solution $p^\star\in \mathbb U$.

To prove the last assertion \eqref{opu1}, we write the Euler Lagrange first order optimality condition that characterizes the optimal control $(u_{1}^\star, u_{2}^\star)$ as follows:
\begin{align}\label{JJJ}
\lim_{\lambda\to 0}\frac{J_1(u_1^{\star}+\lambda (v_1-u_1^{\star}),u_2^{\star}+\lambda (v_2-u_2^{\star}))-J_1(u_1^{\star},u_2^{\star})}{\lambda}\geq 0,\;\;\forall v\coloneqq (v_1,v_2)\in \mathcal{Z}_{ad}.
\end{align}
Recall that
\begin{align*}
&J_1(u_1^{\star}+\lambda (v_1-u_1^{\star}),u_2^{\star}+\lambda (v_2-u_2^{\star}))\\
=&\frac 12\|\psi(u_1^{\star}+\lambda (v_1-u_1^{\star}),u_2^{\star}+\lambda (v_2-u_2^{\star}))-z_d^1\|_{L^2(Q)}^2\\\
&+\frac{\beta}{2}\|u_1^{\star}+\lambda (v_1-u_1^{\star}),(u_2^{\star}+\lambda (v_2-u_2^{\star}))\|_{\mathcal Z_D}^2,
\end{align*}
where $\psi^\lambda\coloneqq \psi(u_1^{\star}+\lambda (v_1-u_1^{\star}),(u_2^{\star}+\lambda (v_2-u_2^{\star}))$ is the unique very-weak solution of the system
\begin{equation}\label{solz1-1}
\begin{cases}
\psi_t^\lambda+\mathscr{L} \psi^\lambda = 0 & \mbox{in } Q, \\
\psi^\lambda =u_1^\star+\lambda( v_1- u_{1}^{\star}) & \mbox{in } \Gamma ,  \\
\psi^\lambda=u_2^\star+\lambda(v_2-u_{2}^{\star})  & \mbox{in } \Sigma ,  \\
\psi^\lambda(\cdot,0) = 0 & \mbox{in } \Om.
\end{cases}
 \end{equation}
Using the linearity of the system and the uniqueness of very-weak solutions, we get that
\begin{align}\label{US}
\psi^\lambda
=\psi^\star(u_1^{\star},u_2^{\star}) +\lambda\psi(v_1-u_1^{\star},v_2-u_2^{\star})=\psi^\star+\lambda\psi,
\end{align}
where $\psi$ is the unique very-weak solution of
\begin{equation} \label{solz1}
\begin{cases}
\psi_t+\mathscr{L} \psi = 0 & \mbox{in } Q, \\
\psi = v_1- u_{1}^{\star} & \mbox{in } \Gamma ,  \\
\psi= v_2-u_{2}^{\star}  & \mbox{in } \Sigma ,  \\
\psi(\cdot,0) = 0 & \mbox{in } \Om.
\end{cases}
 \end{equation}
It follows from \eqref{JJJ} and  \eqref{US} that
 \begin{align}\label{US2}
0\le &J_1(u_1^{\star}+\lambda (v_1-u_1^{\star}),u_2^{\star}+\lambda (v_2-u_2^{\star}))\notag\\
=&\frac 12 \|\psi^\star\|_{L^2(Q)}^2+\frac 12\lambda^2\|\psi\|_{L^2(Q)}^2+\frac 12\|z_d^1\|_{L^2(Q)}^2\notag\\
&+\lambda\int_{Q}\psi^\star\psi\;\dx\; \dt-\int_{Q}\psi^\star z_d^1\;\dx\; \dt
-\lambda\int_{Q}\psi z_d^1\;\dx\; \dt+\frac{\beta}{2}\|(u_1^\star,u_2^\star)\|_{\mathcal Z_D}^2\notag\\
 &+\frac{\lambda^2\beta}{2}\|(v_1-u_1^\star,v_2-u_2^\star)\|_{\mathcal Z_D}^2
 +\lambda\beta\int_{\Gamma}u_1^\star(v_1-u_1^\star)\;\mathrm{d}\sigma \dt \notag\\
 &+\lambda\beta\int_{\Sigma}u_2^\star(v_2-u_2^\star)\;\dx\;\dt.
 \end{align}
 It follows from \eqref{US2}  that
 \begin{align}\label{US2-2}
0\le & \frac{J_1(u_1^{\star}+\lambda (v_1-u_1^{\star}),u_2^{\star}+\lambda (v_2-u_2^{\star}))-J_1(u_1^{\star},u_2^{\star})}{\lambda}\notag\\
=&\frac 12\lambda\|\psi\|_{L^2(Q)}^2+\int_{Q}\psi^\star\psi\;\dx\;\dt+\beta\int_{\Sigma}u_2^\star(v_2-u_2^\star)\;\mathrm{d}\sigma \dt
-\int_{Q}\psi z_d^1\;\dx\; \dt\notag\\
& +\frac{\lambda\beta}{2}\|(v_1-u_1^\star,v_2-u_2^\star)\|_{\mathcal Z_D}^2+\beta\int_{\Gamma}u_1^\star(v_1-u_1^\star)\;\mathrm{d}\sigma \dt .
 \end{align}
Taking the limit of \eqref{US2-2} as $\lambda\downarrow 0$, we obtain
 \begin{align*}
 \int_{Q}\psi^\star \psi\;\dx\;\dt -\int_{Q}\psi z_d^1\;\dx\;\dt
+\beta\int_{\Gamma}u_1^\star(v_1-u_1^\star)\;\mathrm{d}\sigma \dt +\beta\int_{\Sigma}u_2^\star(v_2-u_2^\star)\;\mathrm{d}\sigma \dt\ge 0.
 \end{align*}
 That is, for all $(v_1,v_2)\in \mathcal{Z}_{ad}$, we have
\begin{align}\label{euler11}
 \int_Q \psi\Big(\psi^{\star}-z_d\Big)
\dx\, \dt+\beta\int_\Gamma u_{1}^{\star}(v_1-u_{1}^{\star})\, \;\;\mathrm{d}\sigma \dt+\beta\int_\Sigma u_{2}^{\star}(v_2-u_{2}^{\star})\, \dx\, \dt\geq 0.
\end{align}

Next, taking $p^{\star}$ (the solution of \eqref{optp1}) as a test function in the definition of very-weak solutions to \eqref{solz1} we get that
\begin{align}\label{euler21}
\int_Q\psi\Big(z_d^1-\psi^{\star}\Big)\;\dx\;\dt+\int_{\Gamma}(v_1-u^\star)\partial_\nu p^\star\;\;\mathrm{d}\sigma \dt+\int_{\Sigma}(v_2-u_2^\star)\mathcal N_s p^\star\;\dx\;\dt=0.
\end{align}
Combining \eqref{euler11}-\eqref{euler21}, we get \eqref{opu1}. The justification of \eqref{coroopu1} is classical and the proof is finished.
\end{proof}

\begin{remark}\label{coroThm6}
Let $0<s\le 3/4$, $\mathcal{Z}_{ad}= \mathcal{Z}_{D}$,  and $(u_{1}^{\star}, u_{2}^{\star})$ be the minimizer  of  \eqref{pbcontrol11} over $\mathcal{Z}_{ad}$.  It follows from \eqref{coroopu1} and Step 1 in the proof of Theorem \ref{th-36} that the regularity of $u_{1}^{\star}$ can be improved.  More precisely, if $0<s\le 3/4$, then $u_{1}^{\star}$ belongs to $L^2((0,T);H^{1/2}(\pOm))$. 
\end{remark}

\subsection{The second optimal control problem}\label{sec-OCP}

Here we consider the minimization problem
\begin{equation}\label{pbcontrol1-2}
\min_{ (v_1,v_2)\in \mathcal{Z}_{ad}} J_2((v_1,v_2)),
\end{equation}
with the functional $J_2$ given by
\begin{equation}\label{defJ2-2}
J_2(u_1,u_2)\coloneqq \frac{1}{2}\|\psi(T;(u_{1},u_{2}))-z_d^2\|_{H^{-1}(\Om)}^2 + \frac{\beta}{2}\|(u_{1},u_{2})\|^{2}_{\mathcal{Z} _{D}},
\end{equation}
where  $\beta>0$ is a real number, $z_d^2\in H^{-1}(\Om)$, the state $\psi\coloneqq \psi(u_1,u_2)$ is the unique very-weak solution  of \eqref{eqcontrol}, and $\psi(T;(u_{1},u_{2}))=\psi(\cdot,T)$.\medskip

We have the following existence result of optimal solutions.

\begin{proposition}\label{thm:docexist}
Let $0<s\le 3/4$, $\mathcal{Z}_{ad}$ a closed and convex subset of $\mathcal{Z}_{D}$, $u_1\in L^2(\Gamma)$, $u_2\in L^2(\Sigma)$, and  let $\psi\coloneqq \psi(u_1,u_2)$ satisfy \eqref{eqcontrol}.
Then, there exists a unique solution $(u_{1}^{\star}, u_{2}^{\star})$ to the minimization problem \eqref{pbcontrol1-2}-\eqref{defJ2-2}.
\end{proposition}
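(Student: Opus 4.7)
The plan is to mirror the proof of Proposition~\ref{prop1N} but replace the distributed tracking norm on $L^2(Q)$ by the final-time $H^{-1}(\Omega)$-norm, and then invoke the same abstract variational result of \cite[Chapter II, Section 1.2]{lions1971}. The first preparatory step is to observe that the control-to-state map $\mathcal{Z}_D \ni (u_1,u_2) \mapsto \psi(u_1,u_2)$ is linear: this is immediate from uniqueness of very-weak solutions in Theorem~\ref{thm:VWS_Exist_1} and linearity of the identity \eqref{VWS}. Then I would combine Theorem~\ref{thm:VWS_Exist_1} with Remark~\ref{rem-318} to conclude that, for $0<s\le 3/4$, $\psi \in L^2((0,T)\times\R^N) \cap H^1((0,T);H^{-1}(\Omega)) \hookrightarrow C([0,T];H^{-1}(\Omega))$, and in particular that there exists $C>0$ such that
\begin{equation*}
\|\psi(\cdot,T;(u_1,u_2))\|_{H^{-1}(\Omega)} \le C\bigl(\|u_1\|_{L^2(\Gamma)} + \|u_2\|_{L^2(\Sigma)}\bigr) = C\|(u_1,u_2)\|_{\mathcal{Z}_D}.
\end{equation*}
This continuous dependence at $t=T$ in the $H^{-1}$-norm is the crucial fact that makes $J_2$ well-defined and continuous on $\mathcal{Z}_D$; as in Proposition~\ref{prop1N}, the choice $(u_1,u_2)=(0,0)$ gives $\psi\equiv 0$, hence $J_2(0,0) = \tfrac{1}{2}\|z_d^2\|_{H^{-1}(\Omega)}^2 < \infty$, so $\inf_{\mathcal{Z}_{ad}} J_2 < \infty$.

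Next, expanding the squared $H^{-1}(\Omega)$-norm using the inner product $\langle\varphi,\phi\rangle_{H^{-1}} := \langle(-\Delta_D)^{-1}\varphi,\phi\rangle_{H_0^1,H^{-1}}$, I would write
\begin{equation*}
J_2(v_1,v_2) = \pi\bigl((v_1,v_2),(v_1,v_2)\bigr) - L(v_1,v_2) + \tfrac{1}{2}\|z_d^2\|_{H^{-1}(\Omega)}^2,
\end{equation*}
where
\begin{align*}
\pi((u_1,u_2),(v_1,v_2)) &:= \tfrac{1}{2}\langle \psi(u_1,u_2)(\cdot,T),\psi(v_1,v_2)(\cdot,T)\rangle_{H^{-1}(\Omega)} \\
&\quad + \tfrac{\beta}{2}\int_\Gamma u_1 v_1\,\mathrm{d}\sigma\,\dt + \tfrac{\beta}{2}\int_\Sigma u_2 v_2\,\dx\,\dt, \\
L(v_1,v_2) &:= \langle \psi(v_1,v_2)(\cdot,T), z_d^2\rangle_{H^{-1}(\Omega)}.
\end{align*}
The bilinear form $\pi$ is symmetric (by symmetry of the $H^{-1}$ inner product), and the continuity estimate above together with Cauchy--Schwarz yields
\begin{equation*}
|\pi((u_1,u_2),(v_1,v_2))| \le \bigl(\tfrac{C^2}{2} + \tfrac{\beta}{2}\bigr)\|(u_1,u_2)\|_{\mathcal{Z}_D}\|(v_1,v_2)\|_{\mathcal{Z}_D},
\end{equation*}
so $\pi$ is continuous on $\mathcal{Z}_D \times \mathcal{Z}_D$. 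Coercivity is easier than in Proposition~\ref{prop1N} because the first term in $\pi((u,u),(u,u))$ is nonnegative: one gets directly
\begin{equation*}
\pi((v_1,v_2),(v_1,v_2)) \ge \tfrac{\beta}{2}\|(v_1,v_2)\|_{\mathcal{Z}_D}^2,
\end{equation*}
using $\beta>0$. The linear form $L$ is continuous on $\mathcal{Z}_D$ by $|L(v_1,v_2)| \le \|z_d^2\|_{H^{-1}(\Omega)}\|\psi(v_1,v_2)(\cdot,T)\|_{H^{-1}(\Omega)} \le C\|z_d^2\|_{H^{-1}(\Omega)}\|(v_1,v_2)\|_{\mathcal{Z}_D}$.

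With these three ingredients in place on the closed convex subset $\mathcal{Z}_{ad}\subset \mathcal{Z}_D$, the abstract existence-uniqueness result for minimizers of strictly convex, continuous, coercive quadratic functionals (\cite[Chapter II, Section 1.2]{lions1971}) applies and delivers a unique $(u_1^\star,u_2^\star)\in\mathcal{Z}_{ad}$ minimizing $J_2$. The main potential obstacle is the continuity estimate for $\psi(\cdot,T)$ in $H^{-1}(\Omega)$ in terms of the $\mathcal{Z}_D$-norm of the data; this was the reason for the restriction $0<s\le 3/4$ (needed to make the very-weak formulation well-posed via Theorem~\ref{thm:VWS_Exist_1}) and for invoking Remark~\ref{rem-318} to justify that $\psi(\cdot,T)$ is an element of $H^{-1}(\Omega)$ depending continuously on $(u_1,u_2)$.
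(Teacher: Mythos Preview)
Your argument is correct, but it follows a genuinely different route from the paper's own proof. You transplant the abstract variational scheme of Proposition~\ref{prop1N} (Lions' result from \cite[Chapter II, Section 1.2]{lions1971}) to $J_2$, relying on the continuity of the final-time map $(u_1,u_2)\mapsto \psi(\cdot,T)\in H^{-1}(\Omega)$. The paper instead uses the direct method: it takes a minimizing sequence $\{(u_{1n},u_{2n})\}$, extracts weak limits $u_{1n}\rightharpoonup u_1^\star$, $u_{2n}\rightharpoonup u_2^\star$, $\psi_n\rightharpoonup\psi^\star$, $\psi_n(\cdot,T)\rightharpoonup\psi_T$, passes to the limit in the very-weak formulation \eqref{VWS-2} to identify $\psi^\star$ as the state for $(u_1^\star,u_2^\star)$ and $\psi_T=\psi^\star(\cdot,T)$, and then concludes by weak lower semicontinuity and strict convexity of $J_2$.

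What each approach buys: your method is shorter and keeps the treatment of $J_1$ and $J_2$ uniform, but it leans on a quantitative bound $\|\psi(\cdot,T)\|_{H^{-1}(\Omega)}\le C\|(u_1,u_2)\|_{\mathcal Z_D}$ that is only implicit in Remark~\ref{rem-318} (the remark asserts the regularity $\psi\in H^1((0,T);H^{-1}(\Omega))$ without writing down the estimate; you would need to invoke the closed graph theorem or revisit the evolution-equation argument to make the constant explicit). The paper's direct-method proof sidesteps that estimate entirely: bounds on $\psi_n(\cdot,T)$ come for free from the structure of $J_2$, and the identification $\psi_T=\psi^\star(\cdot,T)$ is done by comparing two instances of \eqref{VWS-2}. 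So the paper's route is more self-contained here, while yours is more economical once the final-time continuity is granted.
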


\begin{proof}
Here, we use minimizing sequences. Since the functional $J_2 : \mathcal{Z}_{ad} \rightarrow \mathbb{R}$ is bounded from below by zero, it is possible to construct a minimizing sequence $\{(u_{1n},u_{2n})\}_{n\in\mathbb{N}}$ such that
\begin{equation}\label{control1}
 \lim_{n\to \infty}J_2((u_{1n},u_{2n}))=\inf_{(v_{1},v_{2})\in \mathcal{Z}_{ad}}J_2((v_{1},v_{2})). 
 \end{equation}
We denote by $\psi_{n} \coloneqq \psi_n((u_{1n},u_{2n}))$ the state associated with the control $(u_{1n},u_{2n})$. Then, for each $n\in\NN$, we have that $\psi_n(u_{1n},u_{2n})$ is the unique very-weak solution of
\begin{equation}\label{eqcontroln}
\begin{cases}
(\psi_{n})_{t} + \mathscr{L}\psi_{n} = 0 & \mbox{ in }\; Q,\\
\psi_{n}= u_{1n}&\mbox{ on }\;\Gamma,\\
\psi_{n} = u_{2n} &\mbox{ in }\; \Sigma , \\
\psi_{n}(\cdot,0) = 0, &\mbox{ in }\; \Omega.
\end{cases}
\end{equation}
It  follows from \eqref{control1}, the structure of the cost function given by \eqref{defJ2-2}, and the definition of the norm on  $\mathcal{Z}_{D}$ given by \eqref{normzd} that, there exists a constant $C>0$ independent of $n$ such that,
\begin{align}
\|u_{1n}\|_{L^2(\Gamma)}&\leq  C,\label{estun1}\\
\|u_{2n}\|_{ L^{2}(\Sigma)} &\leq C,\label{estun2}\\
\|\psi_{n}(\cdot,T)\|_{H^{-1}(\Omega)}&\leq  C.\label{estrnT}
\end{align}
Since $\psi_n\in L^2((0,T)\times\R^N)$ is the unique very-weak solution solution of \eqref{eqcontroln}, it follows from   \eqref{VWS_EST_1}, \eqref{estun1} and \eqref{estun2} that,
\begin{align}\label{estpsin0}
\|\psi_n\|_{L^2(Q)} \leq\|\psi_n\|_{L^2((0,T)\times\RR^N)} \leq C.
\end{align}
It follows from \eqref{estun1},  \eqref{estun2},  \eqref{estrnT},  and \eqref{estpsin0} that there exist $(u_{1}^{\star},u_{2}^{\star}) \in L^2(\Gamma)\times L^2(\Sigma)$, $\psi_T\in H^{-1}(\Omega)$, and $\psi^{\star}\in  L^2((0,T)\times \R^N)$ such that,  as $n\to\infty$,
\begin{eqnarray}
u_{1n} \rightharpoonup u_{1}^{\star}&\mbox{ weakly in }& L^2(\Gamma),\label{cvun1}\\
u_{2n}\rightharpoonup u_{2}^{\star}&\mbox{ weakly in }& L^{2}(\Sigma),\label{cvun2}\\
\psi_n(\cdot, T)\rightharpoonup \psi_T &\mbox{ weakly in }& H^{-1}(\Omega),\label{cvrnT}\\
\psi_n \rightharpoonup \psi^{\star}&\mbox{ weakly in }&  L^2((0,T)\times \R^N).\label{cvrn}
\end{eqnarray}
Note that, from \eqref{estpsin0} and \eqref{cvrn} we have that,  as $n\to\infty$,
\begin{align}
\psi_n \rightharpoonup \psi^{\star} \mbox{ weakly in }  L^2(Q).\label{cvrnO}
\end{align}
Using \eqref{cvun1} and \eqref{cvun2},  we have that,  as $n\to\infty$,
$$
(u_{1n},u_{2n}) \rightharpoonup (u_{1}^{\star},u_{2}^{\star}) \mbox{ weakly in } \mathcal{Z}_{D}.
$$
Since $(u_{1n}, u_{2n})\in \mathcal{Z}_{ad}$ and $ \mathcal{Z}_{ad}$ is a closed subset of $\mathcal{Z}_{D}$, we have that
\begin{equation}
(u_{1}^{\star}, u_{2}^{\star})\in \mathcal{Z}_{ad}.\label{cvuad}
\end{equation}

It follows from the definition of very-weak solutions to the system \eqref{eqcontroln} that
\begin{align}\label{SA}
 \dis \int_Q\psi_n(-\phi_{t} + \mathscr{L}\phi)\, \dx\, \dt=&\dis \langle \psi_n(\cdot,T),\phi(\cdot,T)\rangle_{H^{-1}(\Om),H_0^{1}(\Om)} \notag\\
 &-\int_\Gamma u_{1n} \partial_\nu \phi \, \;\mathrm{d}\sigma \dt -\int_\Sigma u_{2n} \mathcal{N}_s\phi \,\dx\, \dt
\end{align}
for every $\phi\in L^2((0,T);\mathbb V)\cap H^1((0,T);L^2(\Omega))$  and $n\in\NN$.
Using all the above convergences, and taking the limit of \eqref{SA} as $n\to\infty$, we get that
\begin{align}\label{B1}
\int_Q \psi^{\star}\Big(-\phi_{t} + \mathscr{L}\phi\Big)\;\dx\; \dt= &\langle \psi_T,\phi(\cdot,T)\rangle_{H^{-1}(\Om),H_0^{1}(\Om)} - \int_\Gamma u_{1}^{\star} \partial_\nu \phi \,\;\mathrm{d}\sigma \dt \notag\\
&-\int_\Sigma u_{2}^{\star} \, \mathcal{N}_s\phi \,\dx\, \dt,
\end{align}
for every $\phi\in L^2((0,T);\mathbb V)\cap H^1((0,T);L^2(\Omega))$. In particular, we have that
\begin{align*}
\int_Q \psi^{\star}\Big(-\phi_{t} + \mathscr{L}\phi\Big)\; \;\mathrm{d}\sigma \dt = - \int_\Gamma u_{1}^{\star} \partial_\nu \phi \, \;\mathrm{d}\sigma \dt -\int_\Sigma u_{2}^{\star} \, \mathcal{N}_s\phi \,\dx\, \dt,
\end{align*}
$\phi\in L^2((0,T);\mathbb V)\cap H^1((0,T);L^2(\Omega))$ with $\phi(\cdot,T)=0$ a.e.  on $\Om$.  Since $\psi_n\in C([0,T];H^{-1}(\Omega))$ and $\psi_n(\cdot,0)=0$, we have that $\psi^\star(\cdot,0)=0$ in $\Omega$.
We have shown that $\psi^\star$ is a very-weak solution of
\begin{equation}\label{AA}
\begin{cases}
\psi^{\star}_{t} + \mathscr{L}\psi^{\star} = 0 & \mbox{ in }\; Q,\\
\psi^{\star} = u_{1}^{\star} &\mbox{ in }\; \Gamma , \\
\psi^{\star} = u_{2}^{\star} &\mbox{ in }\; \Sigma , \\
\psi^{\star}(\cdot,0) = 0&\mbox{ in }\; \Omega,
\end{cases}
\end{equation}
in the sense of Definition \ref{def:VWS}. It follows from Remark \ref{rem-318} that $\psi^\star$ enjoys the following additional regularity: $\psi^\star\in C([0,T];H^{-1}(\Omega)$. This implies that
\begin{align}\label{B2}
\int_Q \psi^{\star}\Big(-\phi_{t} + \mathscr{L}\phi\Big)\;\dx\;\dt= &\langle \psi^\star(\cdot,T),\phi(\cdot,T)\rangle_{H^{-1}(\Om),H_0^{1}(\Om)}\notag\\
&- \int_\Gamma u_{1}^{\star} \partial_\nu \phi \, \;\mathrm{d}\sigma \dt -\int_\Sigma u_{2}^{\star} \, \mathcal{N}_s\phi \,\dx\, \dt,
\end{align}
for every $\phi\in L^2((0,T);\mathbb V)\cap H^1((0,T);L^2(\Omega))$.
Combining \eqref{B1}-\eqref{B2}, we get
\begin{align*}
 \langle \psi_T,\phi(\cdot,T)\rangle_{H^{-1}(\Om),H_0^{1}(\Om)}= \langle \psi^\star(\cdot,T),\phi(\cdot,T)\rangle_{H^{-1}(\Om),H_0^{1}(\Om)}
\end{align*}
for every $\phi\in L^2((0,T);\mathbb V)\cap H^1((0,T);L^2(\Omega))$.  From which we can deduce that $\psi_T=\psi^\star(\cdot,T)$.\medskip

Next, since the functional $J_2$ is convex and lower semi-continuous, using \eqref{cvun1}, \eqref{cvun2},  \eqref{cvuad}, the fact that $\psi^\star(\cdot,T)=\psi_T$, and  \eqref{cvrnT}, we can deduce that
\begin{align*}
J_2((u_{1}^{\star}, u_{2}^{\star}))\leq \liminf_{ n\to +\infty}J_2((u_{1n}, u_{2n}))&=\lim_{ n\to +\infty}J_2((u_{1n}^{\star}, u_{2n}^{\star}))\\
&=\inf_{ (u_{1}, u_{2})\in \mathcal{Z}_{ad}} J_2((u_{1}, u_{2}))\le J_2((u_{1}^\star, u_{2}^\star)).
\end{align*}
We have shown that $(u_{1}^{\star}, u_{2}^{\star})$ is the optimal solution of \eqref{pbcontrol1-2}-\eqref{defJ2-2}.  The uniqueness is straightforward and follows directly from the strict convexity of  $J_2$. The proof is finished.
\end{proof}


The following result characterizes the optimality conditions.

\begin{theorem}\label{Thm5}
Let $0<s\le 3/4$ and $\mathbb{U}:=L^2(0,T;H^1_0(\Omega))\cap H^1((0,T);H^{-1}(\Omega)).$ Let $\mathcal{Z}_{ad}$ be a closed, convex subspace of $ \mathcal{Z}_{D}$, and $(u_{1}^{\star}, u_{2}^{\star})$ be the minimizer  of \eqref{pbcontrol1-2}-\eqref{defJ2-2} over $\mathcal{Z}_{ad}$.  Let $\psi^\star$ be the associated unique very-weak solution of \eqref{Eq:main_A1} with boundary datum $u_1^\star$, and exterior datum $u_2^\star$.
Then, there exists $p^{\star}=p^\star(u_1^\star,u_2^\star) $ such that the triplet $(\psi^{\star},p^{\star},(u_{1}^{\star}, u_{2}^{\star}))\in L^2((0,T)\times \R^N)\times \mathbb{U}\times  \mathcal{Z}_{ad}$ satisfies the following optimality systems:
\begin{equation}\label{optrho}
\begin{cases}
\psi^{\star}_{t} + \mathscr{L}\psi^{\star} = 0 & \mbox{ in }\; Q,\\
\psi^{\star} = u_{1}^{\star} &\mbox{ in }\; \Gamma , \\
\psi^{\star} = u_{2}^{\star} &\mbox{ in }\; \Sigma , \\
\psi^{\star}(\cdot,0) = 0&\mbox{ in }\; \Omega,
\end{cases}
\end{equation}
and
\begin{equation}\label{optp}
\begin{cases}
-p^{\star}_t + \mathscr{L} p^{\star} = 0 &\mbox{ in } Q, \\
p^{\star} = 0 &\mbox{ in }\; \Sigma , \\
p^{\star}(\cdot,T)  = (-\Delta_D)^{-1}[\psi^{\star}(\cdot,T)- z_d^2] &\mbox{ in }\; \Om,
\end{cases}
\end{equation}
and for all $(v_1,v_2)\in \mathcal{Z}_{ad}$, we have
\begin{equation}\label{opu}
 \int_\Gamma\Big(\partial _\nu p^{\star}-\beta u_{1}^{\star}\Big)(u_{1}^{\star}-v_1)\, \mathrm{d}\sigma\; \dt+
\int_\Sigma\Big(\mathcal{N}_s p^{\star}-\beta u_{2}^{\star}\Big)(u_{2}^{\star}-v_2)\, \dx\, \dt\geq 0 .
 \end{equation}
 In addition,
\begin{equation}\label{coroopu}
( u_{1}^{\star}, u_{2}^{\star})=\mathbb P ( -\beta^{-1}\partial_\nu p^{\star}, -\beta^{-1}\mathcal{N}_s p^{\star}),
 \end{equation}
 where $\mathbb P$ denotes the projection onto the set $\mathcal{Z}_{ad}$.
\end{theorem}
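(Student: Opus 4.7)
The plan is to argue by the classical Lagrangian approach adapted to the very-weak solution framework developed in Sections \ref{sec-31} and \ref{sec-32}. Since $(u_1^\star,u_2^\star)$ minimizes $J_2$ over the closed convex set $\mathcal Z_{ad}$, the first-order necessary condition reads
\begin{equation*}
\lim_{\lambda\downarrow 0}\frac{J_2(u_1^\star+\lambda(v_1-u_1^\star),u_2^\star+\lambda(v_2-u_2^\star))-J_2(u_1^\star,u_2^\star)}{\lambda}\ge 0,\qquad \forall\,(v_1,v_2)\in\mathcal Z_{ad}.
\end{equation*}
Exploiting the linearity of \eqref{eqcontrol} together with the uniqueness part of Theorem \ref{thm:VWS_Exist_1}, the perturbed state decomposes as $\psi^\lambda=\psi^\star+\lambda\psi$, where $\psi$ is the unique very-weak solution of \eqref{eqcontrol} with boundary datum $v_1-u_1^\star$, exterior datum $v_2-u_2^\star$ and zero initial datum; by Remark \ref{rem-318}, $\psi\in C([0,T];H^{-1}(\Omega))$, so $\psi(\cdot,T)$ makes sense in $H^{-1}(\Omega)$. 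Expanding $J_2$ and passing to the limit, and using the polarization of the $H^{-1}$-inner product $\langle\phi,\eta\rangle_{H^{-1}}=\langle(-\Delta_D)^{-1}\phi,\eta\rangle_{H_0^1(\Om),H^{-1}(\Om)}$, the optimality condition becomes
\begin{equation*}
\langle(-\Delta_D)^{-1}[\psi^\star(\cdot,T)-z_d^2],\,\psi(\cdot,T)\rangle_{H_0^1(\Om),H^{-1}(\Om)}+\beta\int_\Gamma u_1^\star(v_1-u_1^\star)\,\mathrm{d}\sigma\,\dt+\beta\int_\Sigma u_2^\star(v_2-u_2^\star)\,\dx\,\dt\ge 0.
\end{equation*}

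Next, I introduce the adjoint $p^\star$ as the solution of the backward problem \eqref{optp}. Since $\psi^\star(\cdot,T)-z_d^2\in H^{-1}(\Om)$, the terminal datum $p^\star(\cdot,T)=(-\Delta_D)^{-1}[\psi^\star(\cdot,T)-z_d^2]$ lies in $H_0^1(\Om)$. Time-reversing $\tau=T-t$ transforms \eqref{optp} into a forward Cauchy problem with $H_0^1(\Om)$-initial datum, zero source and zero boundary/exterior data, for which Theorem \ref{Thm1} and the smoothing of the semigroup $(e^{-t\mathbb A})_{t\ge 0}$ yield $p^\star\in L^2((0,T);\mathbb V)\cap H^1((0,T);L^2(\Om))$. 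Thanks to the hypothesis $0<s\le 3/4$, Step 1 of the proof of Theorem \ref{th-36} then ensures that $\partial_\nu p^\star\in L^2(\Gamma)$ and $\mathcal N_s p^\star\in L^2(\Sigma)$, so that $p^\star$ is an admissible test function in the very-weak formulation of Definition \ref{def:VWS-2}.

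Inserting $\varphi=p^\star$ into the identity \eqref{VWS-2} satisfied by $\psi$ and using $-p^\star_t+\mathscr L p^\star=0$ gives
\begin{equation*}
\langle\psi(\cdot,T),p^\star(\cdot,T)\rangle_{H^{-1}(\Om),H_0^1(\Om)}=\int_\Gamma(v_1-u_1^\star)\partial_\nu p^\star\,\mathrm{d}\sigma\,\dt+\int_\Sigma(v_2-u_2^\star)\mathcal N_s p^\star\,\dx\,\dt.
\end{equation*}
Since $p^\star(\cdot,T)=(-\Delta_D)^{-1}[\psi^\star(\cdot,T)-z_d^2]$, the left-hand side equals the duality term in the first-order condition. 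Substituting and regrouping yields the variational inequality \eqref{opu}, and the projection formula \eqref{coroopu} follows from the standard characterization of the metric projection onto the closed convex set $\mathcal Z_{ad}\subset\mathcal Z_D$.

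The main obstacle is legitimizing the use of $p^\star$ as a test function in the very-weak formulation of $\psi$. This requires both (i) the parabolic regularity $p^\star\in L^2((0,T);\mathbb V)\cap H^1((0,T);L^2(\Om))$, which hinges on the fact that the terminal datum $(-\Delta_D)^{-1}[\psi^\star(\cdot,T)-z_d^2]$ lies in the form-domain $H_0^1(\Om)$, and (ii) the $L^2$-control of $\partial_\nu p^\star$ on $\Gamma$ and $\mathcal N_s p^\star$ on $\Sigma$, which relies essentially on the restriction $0<s\le 3/4$ through the characterization of $\mathbb V$ given in Remark \ref{rem-37}(c). Without this range, neither the variational inequality nor the projection formula would be rigorously available with $L^2$ data.
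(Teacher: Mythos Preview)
Your proof is correct and follows essentially the same route as the paper: derive the Euler--Lagrange inequality by differentiating $J_2$ along admissible directions, decompose the perturbed state via linearity and uniqueness of very-weak solutions, introduce the adjoint $p^\star$ through the backward problem \eqref{optp}, and then test the very-weak formulation of the sensitivity state $\psi$ against $p^\star$ to convert the $H^{-1}$ duality term into boundary/exterior integrals. Your treatment is in fact slightly more explicit than the paper's about the regularity of $p^\star$: you invoke the smoothing of the analytic semigroup from the $H_0^1(\Omega)$ terminal datum to obtain $p^\star\in L^2((0,T);\mathbb V)\cap H^1((0,T);L^2(\Omega))$, whereas the paper only records $p^\star\in\mathbb U$ and then appeals in one line to Section \ref{sec-32} for $\partial_\nu p^\star\in L^2(\Gamma)$ and $\mathcal N_sp^\star\in L^2(\Sigma)$.
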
	

\begin{proof}
 It follows from the proof of Proposition \ref{thm:docexist} that
$\psi^{\star}$ is the unique very-weak solution of  \eqref{optrho} associated with the minimizer $(u_1^\star,u_2^\star)$.  As above, some classical duality arguments show that \eqref{optp} is the associated dual system. In addition,   using the change of variable $t\mapsto T-t$, we have that $p^{\star}$,  solution of \eqref{optp},  satisfies \eqref{p1} with
$f\coloneqq 0$ and $\phi_0\coloneqq (-\Delta_D)^{-1}[\psi^{\star}(\cdot,T)- z_d^2]\in H^1_0(\Omega)$. Thus,  we can deduce from Theorem \ref{Thm1} that $p^{\star}\in \mathbb{U}. $

To prove the last assertion  \eqref{opu}, we write the Euler Lagrange first order optimality conditions that characterize the optimal control $(u_{1}^\star, u_{2}^\star)$ as follows:
\begin{align*}
\lim_{\lambda\to 0}\frac{J_2(u_1^{\star}+\lambda (v_1-u_1^{\star}),u_2^{\star}+\lambda (v_2-u_2^{\star}))-J_2(u_1^{\star},u_2^{\star})}{\lambda}\geq 0,\;\;\forall (v_1,v_2)\in \mathcal{Z}_{ad}.
\end{align*}
After some calculations, and proceeding as in the proof of Theorem \ref{Thm6}, we obtain that
\begin{align}\label{euler1}
&\langle \psi^\star(\cdot,T),(-\Delta_D)^{-1}[\phi^{\star}(\cdot,T)-z_d^2]
\rangle_{H^{-1}(\Omega),H^1_0(\Omega)}\notag\\
&+\dis
\beta\int_\Gamma u_{1}^{\star}(v_1-u_{1}^{\star})\, d\sigma\, \dt+
\dis \beta\int_\Sigma u_{2}^{\star}(v_2-u_{2}^{\star})\, \dx\, \dt\geq 0 \;\;\forall (v_1,v_2)\in \mathcal{Z}_{ad},
\end{align}
where $\phi^\star= \phi^\star(v_1-u_{1}^{\star},v_2-u_{2}^{\star})$ is the unique very-weak solution of the system
\begin{equation}\label{solz}
\begin{cases}
\phi_t^\star+\mathscr{L} \phi^\star = 0 & \mbox{in } Q, \\
\phi^\star =v_1- u_{1}^{\star} & \mbox{in } \Gamma ,  \\
\phi^\star =v_2-u_{2}^{\star}  & \mbox{in } \Sigma ,  \\
\phi(\cdot,0) = 0 & \mbox{in } \Om.
\end{cases}
 \end{equation}
Recall that under the assumption $0<s\le 3/4$, we have shown in Section \ref{sec-32} that $\partial_\nu p^\star\in  L^2(\Gamma)$ and $\mathcal{N}_s p^{\star}\in L^2(\Sigma)$. In addition, we have that $(-\Delta_D)^{-1}[\psi^{\star}(\cdot,T)- z_d^2]\in H_0^1(\Omega)$.
So,  taking $p^{\star}$ as a test function in the definition of very-weak solutions of  \eqref{solz},   we obtain
\begin{align}\label{euler2}
0=& \langle \psi^\star(\cdot,T),v_2-u_2^{\star})),(-\Delta_D)^{-1}[\phi^{\star}(\cdot,T)-z_d^2]
\rangle_{H^{-1}(\Omega),H^1_0(\Omega)}\notag\\
&+\dis \int_\Gamma(v_1-u_{1}^{\star})\partial_\nu p^{\star}\;\mathrm{d}\sigma\, \dt+\int_\Sigma (v_2-u_{2}^{\star})\mathcal{N}_s p^{\star} \,\dx\, \dt.
\end{align}
Combining \eqref{euler1}-\eqref{euler2}, we get \eqref{opu}.  Here also, the justification of \eqref{coroopu} is classical. The proof is finished.
\end{proof}

\bibliographystyle{plain}

\bibliography{DGW_1}

\end{document}